\definecolor{cadmiumgreen}{rgb}{0.0, 0.42, 0.24}
\theoremstyle{plain}
\newtheorem{thm}{Theorem}[section]
\newtheorem*{mainthm}{Main Theorem}
\newcommand{\mainthmlabel}[1]{
  \phantomsection
  \hypertarget{#1}{}
}
\newcommand{\mainthmrefcolor}{\hyperlink{thm:main}{\textcolor{black}{Main Theorem}}}
\newtheorem{prop}[thm]{Proposition}
\newtheorem{cor}[thm]{Corollary}
\newtheorem{lem}[thm]{Lemma}
\theoremstyle{remark}
\newtheorem{remark}[thm]{Remark}
\newtheorem{example}[thm]{Example}
\theoremstyle{definition}
\newtheorem{definition}[thm]{Definition}
	\renewcommand*\env@matrix[1][*\c@MaxMatrixCols c]{
\hskip -\arraycolsep
\let\@ifnextchar\new@ifnextchar
\array{#1}}
\newcommand*\isom{\xrightarrow{\sim}}
\newcommand{\divisor}{\operatorname{div}}
\newcommand{\ord}{\operatorname{ord}}
\newcommand{\Ker}{\operatorname{Ker}}
\newcommand{\Spec}{\operatorname{Spec}}
\newcommand{\Aut}{\operatorname{Aut}}
\newcommand{\End}{\operatorname{End}}
\newcommand{\Tr}{\operatorname{Tr}}
\newcommand{\Gal}{\operatorname{Gal}}
\newcommand{\Orb}{\operatorname{Orb}}
\newcommand{\Stab}{\operatorname{Stab}}
\def\opn#1#2{\def#1{\operatorname{#2}}}
\opn\Vor{Vor}
\def\rr{\mathbb{R}}
\def\nn{\mathbb{N}}
\def\zz{\mathbb{Z}}
\def\cc{\mathbb{C}}
\def\A{\mathcal{A}}
\def\N{\mathcal{N}}
\def\d{\mathrm{d}}
\def\id{\mathrm{id}}
\def\cl{\mathrm{cl}}
\def\sep{\mathrm{sep}}
\def\an{\mathrm{an}}
\def\tame{\mathrm{tr}}
\def\unr{\mathrm{ur}}
\def\Frob{\mathrm{Frob}}
\def\tor{\mathrm{tor}}
\def\pr{\mathrm{pr}}
\def\eps{\epsilon}
\def\a{a}
\def\tr{\mathrm{tr}}
\def\h{\mathrm{h}}
\numberwithin{equation}{section}
\subjclass[2020]{\href{https://mathscinet.ams.org/msc/msc2020.html?t=11G10}{11G10},
\href{https://mathscinet.ams.org/msc/msc2020.html?t=11G35}{11G35},
\href{https://mathscinet.ams.org/msc/msc2020.html?t=11G50}{11G50},
\href{https://mathscinet.ams.org/msc/msc2020.html?t=14G40}{14G40},
\href{https://mathscinet.ams.org/msc/msc2020.html?t=14K12}{14K12}}
\date{\today}
\begin{document}

\title[Integral torsion points on abelian varieties over function fields]{Integral torsion points on abelian varieties over function fields}

\author{Robin de Jong}
\address{Leiden University \\ Einsteinweg 55  \\ 2333 CC Leiden  \\ The Netherlands }
\email{\href{mailto:rdejong@math.leidenuniv.nl}{rdejong@math.leidenuniv.nl}}

\author{Nicole Looper}
\address{Brown University \\ 
151 Thayer Street  \\ Providence, RI, USA }
\email{\href{mailto:nicole_looper@brown.edu}{nicole_looper@brown.edu}}

\author{Farbod Shokrieh}
\address{University of Washington \\ Box 354350 \\ Seattle, WA 98195 \\ USA }
\email{\href{mailto:farbod@uw.edu}{farbod@uw.edu}}

\begin{abstract} 
We prove an analogue, over global function fields, of a conjecture due to Su-Ion Ih concerning the non-Zariski density of torsion points on abelian varieties that are integral with respect to a given non-special divisor. 
Along the way, we establish a Tate--Voloch type theorem for abelian varieties over completions of global function fields, which allows us to obtain a logarithmic equidistribution result for Galois orbits of torsion points.
\end{abstract}

\maketitle

\setcounter{tocdepth}{1}
\tableofcontents

\thispagestyle{empty}

\section{Introduction} \label{sec:intro}

\renewcommand*{\thethm}{\Alph{thm}}
A \emph{special subvariety} of an abelian variety defined over an algebraically closed field of characteristic zero is defined to be a translate of an abelian subvariety by a torsion point. 
Su-Ion Ih has made the following conjecture, see \cite[\S3]{bir}:

\vspace{2mm}
\noindent \textbf{Ih's conjecture.} \emph{Let $K$ be a number field with algebraic closure $\overline{K}$.
Let $A$ be an abelian variety over~$K$.
Let $D$ be a nonzero effective divisor on $A_{\overline{K}}$.
Assume that at least one irreducible component of $D$ is not special.
Let $S$ be a finite set of places of $K$ including all the archimedean ones.
Let $\A$ be a model of $A$ over the ring of integers of $K$.
Then, the set of torsion points of $A(\overline{K})$ that are $S$-integral with respect to $D$ and $\A$ is not Zariski dense in $A_{\overline{K}}$.}
\vspace{2mm}

As explained in \cite{bir}, one motivation to formulate this conjecture was to have a statement that would compare to the Manin--Mumford conjecture (proved by Raynaud) in a way similar to how Lang's conjecture (proved by Faltings) compares to the Mordell--Lang conjecture (also proved by Faltings).

Ih's conjecture, in general, is still open. In \cite[Theorem~0.2]{bir} one finds a proof of the conjecture in the case that $A$ is an elliptic curve. 
Also in \emph{loc.\ cit.}\ one finds examples showing that the hypothesis that at least one irreducible component of $D$ is special cannot be left out, and that the analogous statement with ``small points'' instead of torsion points is, in general, false.

\subsection{Our main result}

We prove a function field analogue of Ih's conjecture. We note that function field analogues of the Manin--Mumford conjecture, Lang's conjecture, and the Mordell--Lang conjecture, are known by the work of various authors, including Buium \cite{BuiumML}, Hrushovski \cites{hrushovski_ml}, Pillay \cite{Pillay}, Pink--R\"ossler \cites{pr-psi}, Scanlon \cites{scanlon_pos_MM}, and Voloch \cite{voloch}.

A first observation is that the notion of ``special subvariety'' has to be adjusted, see \cites{yamaki_bog, yamaki} and \cite{xie-yuan}. In short, one has to allow for the possibility of constant subvarieties, see Definition~\ref{def:special}. 

\begin{mainthm}\mainthmlabel{thm:main}
Let $K$ be a field of transcendence degree one over a finite field~$k$ of characteristic~$p$. 
Let $\overline{K}$ be an algebraic closure of $K$ and let $\overline{k} \subseteq \overline{K}$ denote the algebraic closure of $k$ in $\overline{K}$.
Let $A$ be an abelian variety over $K$.
Let $D$ be a nonzero effective divisor on $A_{\overline{K}}$.
Assume that at least one irreducible component of $D$ is not $\overline{K}/\overline{k}$-special.
Let $S$ be a finite set of places of $K$ and
let $\A$ be a model of $A$. 
Then, the set of torsion points of $A(\overline{K})$ of prime-to-$p$ order that are $S$-integral with respect to $D$ and $\A$ is not Zariski dense in $A_{\overline{K}}$.
\end{mainthm}
We mention that Petsche in \cite{petsche_equid} has proven the analogue of \cite[Theorem~0.2]{bir} for elliptic curves over a function field over any field. Petsche's result is quantitative, in the sense that it gives an explicit bound for the number of torsion points that are $S$-integral with respect to a given non-torsion point and a given Weierstrass model.

We remark that the set of prime-to-$p$ torsion points of $A({\overline{K}})$ is Zariski dense. The focus on prime-to-$p$ torsion points allows us to leverage Galois-theoretic methods and avoid inseparability issues.

\subsection{Ingredients of the proof}
The strategy of our proof of the ~\mainthmrefcolor~ is similar to the idea in \cite{bir}. Roughly speaking, if $D$ is an ample integral divisor on $A_{\overline{K}}$ and there exists a generic net of $S$-integral prime-to-$p$ order torsion points with respect to $D$, then this forces $D$ to have vanishing canonical height. This in turn implies that $D$ is $\overline{K}/\overline{k}$-special by the geometric Bogomolov conjecture for divisors on abelian varieties, as shown by Yamaki \cite{yamaki}.

Besides Yamaki's result, we use a Galois equidistribution result for torsion points on abelian varieties over function fields. Such a result is proved by Faber \cite{faber_equid} and Gubler \cite{gub_equid}. But, in fact, we need a more refined version of their result, namely a certain result of ``logarithmic equidistribution'' type (see Theorem~\ref{thm:log_equid}). The need for logarithmic test functions comes from our need to use local height functions.

A crucial tool in our proof is the use of a certain normalization of local canonical heights on abelian varieties that was introduced and studied by the first and third named authors in \cite{djs_canonical}, using the framework of Berkovich analytic spaces. This normalization generalizes the well-known normalization of local canonical heights on elliptic curves given by Tate. Importantly, the normalized local canonical height vanishes for integral points at places of good reduction (see Corollary~\ref{cor:normalized_good}). We further have the global formula \eqref{eq:local_global_bis} for the canonical height, which yields a key result (Proposition \ref{prop:local_to_global}) in the proof of the ~\mainthmrefcolor.

\subsection{A Tate--Voloch type result} Our result of logarithmic equidistribution type is obtained by first proving a suitable analogue of the so-called Tate--Voloch conjecture for abelian varieties defined over local fields in characteristic~$p$ (see Theorem~\ref{thm:tate-voloch-app}). The Tate--Voloch conjecture (\cite[\S3]{TateVoloch}) states that for any semi-abelian variety $G$ over $\mathbb{C}_p$, where $\mathbb{C}_p$ is the completion of $\overline{\mathbb{Q}_p}$ for some prime $p$, and for any closed subvariety $X$ of $G$, there exists an $\epsilon>0$ such that any torsion point of $G$ either lies in $X$ or lies at distance at least $\epsilon$ from $X$. See \S\ref{sec:distance} for details on how this distance is defined. This conjecture was proved by Scanlon \cites{Scanlon, scanlon_crelle} using model-theoretic techniques, with the assumption that $G$ must be defined over $\overline{\mathbb{Q}_p}$. Corpet \cite{corpet_thesis} later found a different proof that avoided model theory, at the expense of the further assumption that both $G$ and $X$ are defined over $\overline{\mathbb{Q}_p}$. Corpet's proof translates the analytic proximity of torsion points to $X$ into an algebro-geometric condition on points valued in an ultrafield, thereby recasting the argument in purely field-theoretic and valuation-theoretic terms. Our proof of our Tate--Voloch type result for abelian varieties over completions of global function fields follows Corpet's proof to a large extent. We adapt and simplify Corpet's argument, among other ways, by utilizing the fact that the ``satellite variety" defined in \eqref{eqn:Z} is special.

\subsection{Back to number fields} 
Our proof of the ~\mainthmrefcolor~ provides a template for proving Ih's conjecture over number fields. The necessary analogue in the number field setting of the Bogomolov conjecture is provided by~\cite[Corollary~3]{zhang_equid}, and the necessary analogue of the Galois equidistribution result due to Faber and  Gubler is provided by~\cite[Theorem~1.1]{zhang_equid}. 

In order to derive from this the necessary logarithmic equidistribution result at the non-archimedean places of the number field, one may invoke the Tate--Voloch conjecture for abelian varieties defined over local fields in characteristic zero as shown by Scanlon.  The ``only'' item that is missing from this template in the number field setting is a suitable logarithmic equidistribution result for Galois orbits of torsion points at archimedean places.

\subsection{Structure of the paper} After a short section discussing preliminaries, our paper will consist of two parts.  Part~\ref{part_one} (\S\S\ref{appendix:invariant}--\ref{sec:appendixB}) is aimed at proving the Tate--Voloch type result that is needed in our proof of the ~\mainthmrefcolor. Part~\ref{part_two} (\S\S\ref{sec:normalized}--\ref{sec:proof_last_reduction_step}) is devoted to the proof of the ~\mainthmrefcolor. We split the paper into these two parts because the techniques and tools appearing in both parts are rather different in nature. 

\subsection{Acknowledgements} The authors would like to thank Ariyan Javanpeykar, Damian R\"ossler, Tom Scanlon and Junyi Xie for helpful comments and correspondences around this project. The second named author was funded by NSF DMS-2302586 as well as a Sloan Research Fellowship. The third named author was partially supported by NSF CAREER DMS-2044564. 

\renewcommand*{\thethm}{\arabic{section}.\arabic{thm}}

\section{Preliminaries} \label{sec:stab_spec}

Throughout, a \emph{variety} over a field $\mathds{k}$ will be a reduced separated $\mathds{k}$-scheme of finite type.

Let $\mathds{K}$ be an algebraically closed field, let $A$ be an abelian variety over $\mathds{K}$, and let $X$ be a closed subvariety of $A$.  

\begin{definition}
Let $P$ be the closed subgroup scheme of $A$ characterized by the following property: for all $\mathds{K}$-schemes $S$ and all $S$-valued points $a \colon S \to A$, translation by $a$ on the product $A \times S$ maps $X \times S$ into itself if and only if $a$ factors through $P$. We define $\Stab_A(X)$ to be the reduced closed subgroup scheme $P_{\mathrm{red}} \subseteq A$, and call it the \emph{stabilizer} of $X$ in $A$.
\end{definition}

The stabilizer exists by \cite[Expos\'e VIII, Ex.\ 6.5(e)]{SGA3}.  The identity component of the stabilizer of $X$ in $A$ is an abelian subvariety of $A$.
 \begin{lem} \label{lem:stab_quotient_pre}
Let $X \subseteq A$ be a closed irreducible subvariety and let $H$ be a closed subgroup variety of $A$. Then, we have a canonical identification
\[ \Stab_{A/H} (X/H) = \Stab_A(X)/H . 
\]
\end{lem}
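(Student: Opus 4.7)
The plan is to verify the claimed identity by the functor of points, checking both inclusions of closed subgroup schemes of $A/H$. Write $\pi \colon A \to A/H$ for the quotient morphism; it is a faithfully flat $H$-torsor.

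The inclusion $\Stab_A(X)/H \subseteq \Stab_{A/H}(X/H)$ is immediate from functoriality. For any test scheme $S$ and any $g \in \Stab_A(X)(S)$, translation by $g$ preserves $X \times S$ inside $A \times S$; applying $\pi$ coordinatewise, translation by $\pi(g)$ preserves $(X/H) \times S$, so $\pi(g)$ factors through $\Stab_{A/H}(X/H)$. This shows that $\pi$ induces a morphism of group schemes $\Stab_A(X) \to \Stab_{A/H}(X/H)$, whose (scheme-theoretic) image is what we denote $\Stab_A(X)/H$.

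For the reverse inclusion I would argue by fppf descent along $\pi$. Given an $S$-valued point $\bar{a}$ of $\Stab_{A/H}(X/H)$, pulling back the $H$-torsor $\pi$ along $\bar{a}$ produces an fppf cover $S' \to S$ together with a lift $a \in A(S')$ of $\bar{a}|_{S'}$. The stabilizer hypothesis on $\bar{a}$ translates to $\pi(a + X_{S'}) = (X/H)_{S'}$, so the closed subschemes $a + X_{S'}$ and $X_{S'}$ of $A_{S'}$ have the same image under $\pi$. Since $X$ is $H$-stable — as witnessed by the very existence of the quotient $X/H$, i.e.\ $H \subseteq \Stab_A(X)$ — the translate $a + X_{S'}$ is also $H$-stable, and both subschemes coincide with the preimage under $\pi$ of their common image in $(A/H)_{S'}$. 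Therefore $a + X_{S'} = X_{S'}$, which means $a$ factors through $\Stab_A(X)$. Faithfully flat descent of this factorization along $S' \to S$ then yields $\bar{a} \in (\Stab_A(X)/H)(S)$, giving the desired inclusion.

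The main technical point is to execute the fppf descent cleanly, invoking that $\Stab_A(X)$ is a closed subgroup scheme of $A$ so that the property of factoring through $\Stab_A(X)$ is preserved under faithfully flat covers. The irreducibility hypothesis on $X$ guarantees that $X/H$ is itself a closed irreducible subvariety of $A/H$, so that the stabilizer on each side is being taken in a consistent geometric setting and the identification $X = \pi^{-1}(X/H)$ that drives the argument is unambiguous.
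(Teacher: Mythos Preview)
Your argument is correct, but the paper does not prove this lemma at all: it simply cites \cite[Proposition~4.1]{pr-psi}. So there is nothing to compare at the level of strategy; you are supplying a self-contained proof where the paper defers to the literature.

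Two remarks on the execution. First, the fppf descent machinery is more than is needed here. Both $\Stab_{A/H}(X/H)$ and $\Stab_A(X)/H$ are, by the paper's conventions, \emph{reduced} closed subgroup schemes of the variety $A/H$ over the algebraically closed field $\mathds{K}$; hence it suffices to check equality on $\mathds{K}$-points, where your argument specializes to the elementary observation that $a+X=X$ if and only if $\pi(a)+X/H=X/H$, using $X=\pi^{-1}(X/H)$. Second, and relatedly, when you write ``$a$ factors through $\Stab_A(X)$'' you have actually shown that $a$ factors through the full (possibly non-reduced) stabilizer scheme $P$, not through $P_{\mathrm{red}}=\Stab_A(X)$. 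This is harmless --- one checks $(P/H)_{\mathrm{red}}=P_{\mathrm{red}}/H$ since both are reduced with the same underlying set --- but strictly speaking it is a step you have skipped. Restricting to $\mathds{K}$-points from the outset sidesteps this entirely.

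You are also right to make explicit the hypothesis $H\subseteq\Stab_A(X)$, which the lemma leaves implicit in the notation $X/H$; without it the statement is false.
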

\begin{proof} This follows from \cite[Proposition~4.1]{pr-psi}. 
\end{proof}

Let $\mathds{k}$ be an algebraically closed subfield of $\mathds{K}$.
\begin{definition}
The \emph{$\mathds{K}/\mathds{k}$-trace} of $A$ is a final object in the category of pairs $(B, f)$ consisting of an abelian variety $B/\mathds{k}$ and a morphism of abelian varieties $f \colon B \otimes_{\mathds{k}} \mathds{K} \to A$. It is denoted by $(A^{\mathds{K}/\mathds{k}},\Tr^{\mathds{K}/\mathds{k}}_A)$.
\end{definition}

The $\mathds{K}/\mathds{k}$-trace of $A$ exists by \cite[Theorem~6.2]{conrad_chow}. 

\begin{definition}\label{def:special}
 An irreducible closed subvariety $X$ of $A$ is called $\mathds{K}/\mathds{k}$-\emph{special} if there is an abelian subvariety $G$ of $A$, a torsion point $\tau \in A(\mathds{K})$, and a closed subvariety $X'$ of $A^{\mathds{K}/\mathds{k}}$ such that $X = G + \Tr^{\mathds{K}/\mathds{k}}_A(X' \otimes_\mathds{k} \mathds{K}) + \tau$. 
\end{definition}
 \begin{lem} \label{lem:stab_quotient}
Let $X$ be an irreducible closed subvariety of $A$. Let $H$ be a closed algebraic subgroup of $\Stab_A(X)$. Then, $X$ is a $\mathds{K}/\mathds{k}$-special subvariety of~$A$ if and only if $X/H$ is a $\mathds{K}/\mathds{k}$-special subvariety of~$A/H$. 
\end{lem}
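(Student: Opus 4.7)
The plan is to prove the two directions separately; the forward direction is a formal consequence of the universal property of Chow's $\mathds{K}/\mathds{k}$-trace, while the backward direction requires lifting structural data from $A/H$ back to $A$.

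For the \emph{forward direction}, assume $X = G + \Tr^{\mathds{K}/\mathds{k}}_A(X' \otimes \mathds{K}) + \tau$. The quotient $\pi : A \to A/H$ induces, by the universal property of the trace, a unique $\mathds{k}$-morphism $\phi : A^{\mathds{K}/\mathds{k}} \to (A/H)^{\mathds{K}/\mathds{k}}$ satisfying
\[
\pi \circ \Tr^{\mathds{K}/\mathds{k}}_A \;=\; \Tr^{\mathds{K}/\mathds{k}}_{A/H} \circ (\phi \otimes \mathds{K}).
\]
Applying $\pi$ gives $X/H = \pi(G) + \Tr^{\mathds{K}/\mathds{k}}_{A/H}(\phi(X') \otimes \mathds{K}) + \pi(\tau)$, with $\pi(G)$ an abelian subvariety of $A/H$, $\phi(X')$ a closed subvariety of $(A/H)^{\mathds{K}/\mathds{k}}$, and $\pi(\tau)$ torsion. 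This exhibits $X/H$ as $\mathds{K}/\mathds{k}$-special.

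For the \emph{backward direction}, write $X/H = G'' + \Tr^{\mathds{K}/\mathds{k}}_{A/H}(Y' \otimes \mathds{K}) + \tau''$. To handle general $H$, factor $A \to A/H$ as $A \to A/H^0 \to A/H$; since the second map is an isogeny (with finite kernel $H/H^0$), the finite-subgroup case of the lemma reduces to the functoriality already used in the forward direction, and I am reduced to $H$ connected. Now lift $\tau''$ to a torsion point $\tau \in A(\mathds{K})$ (possible because the abelian variety $H$ has divisible $\mathds{K}$-points), translate $X$ by $-\tau$, and assume $\tau = 0 = \tau''$. Set $G := \pi^{-1}(G'')$; since $\pi$ has connected fibers and $G''$ is connected, $G$ is an abelian subvariety of $A$ containing $H$ with $\pi(G) = G''$. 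By Lemma~\ref{lem:stab_quotient_pre} combined with $G'' \subseteq \Stab_{A/H}(X/H)$, we get $G \subseteq \Stab_A(X)$. Let $\rho = \bar\rho \circ \pi : A \to A/H \to B := A/G$. Since $G \subseteq \Stab_A(X)$, $X = \rho^{-1}(\rho(X))$, and trace functoriality applied to $\bar\rho$ yields
\[
\rho(X) \;=\; \Tr^{\mathds{K}/\mathds{k}}_B(Y'' \otimes \mathds{K}), \qquad Y'' := \bar\rho^{\mathds{K}/\mathds{k}}(Y').
\]
Setting $X' := (\rho^{\mathds{K}/\mathds{k}})^{-1}(Y'') \subseteq A^{\mathds{K}/\mathds{k}}$, the candidate decomposition is $X = G + \Tr^{\mathds{K}/\mathds{k}}_A(X' \otimes \mathds{K})$: one inclusion is formal from functoriality, while the other amounts to surjectivity of $\rho^{\mathds{K}/\mathds{k}}$ onto $Y''$.

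The \emph{main obstacle} is precisely this surjectivity point. Applying Chow's trace to the short exact sequence $0 \to G \to A \to B \to 0$ yields a sequence of $\mathds{k}$-abelian varieties that is exact up to isogeny (see \cite{conrad_chow}), so $\rho^{\mathds{K}/\mathds{k}}(A^{\mathds{K}/\mathds{k}})$ is a finite-index abelian subvariety of $B^{\mathds{K}/\mathds{k}}$. Any torsion obstruction preventing $Y''$ from lying in this image can be absorbed back into $\tau$ by translating $Y'$ by a suitable torsion point of $(A/H)^{\mathds{K}/\mathds{k}}$; once $Y''$ is in the image, the reverse inclusion follows by lifting points of $Y''$ through $\rho^{\mathds{K}/\mathds{k}}$. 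This trace-surjectivity bookkeeping, together with the preliminary reductions, is the technical heart of the lemma; everything else is routine functoriality.
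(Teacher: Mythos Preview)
The paper does not prove this lemma at all; it simply cites \cite[Lemma~7.1]{yamaki_strict}. So you are supplying a direct argument where the authors defer to the literature, and there is nothing to compare against beyond assessing your proof on its own merits.

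Your forward direction is correct and is the expected functoriality argument.

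In the backward direction there is a genuine gap in your reduction to connected $H$. You claim that for the isogeny $A/H^0 \to A/H$ ``the finite-subgroup case of the lemma reduces to the functoriality already used in the forward direction,'' but this is the \emph{backward} implication for a finite kernel, and forward-direction functoriality does not yield it. Concretely: if $\pi \colon A \to A/H$ is an isogeny and $\psi \colon A/H \to A$ satisfies $\psi \circ \pi = [n]$, then applying the forward direction to $\psi$ would require $\ker(\psi) \subseteq \Stab_{A/H}(X/H)$; unwinding via Lemma~\ref{lem:stab_quotient_pre} this becomes $A[n] \subseteq \Stab_A(X)$, which is false in general. The repair is to drop the reduction and run your connected-$H$ argument for arbitrary $H$: lifting $\tau''$ to torsion still works (a multiple of $n\tilde\tau$ lands in the divisible group $H^0(\mathds{K})$), and one takes $G$ to be the identity component of $\pi^{-1}(G'')$, absorbing the finite discrepancy $\pi^{-1}(G'')/G$ into $\tau$ using irreducibility of $X$.

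Your ``main obstacle'' is in fact not an obstacle. The map $\rho^{\mathds{K}/\mathds{k}} \colon A^{\mathds{K}/\mathds{k}} \to B^{\mathds{K}/\mathds{k}}$ is always surjective: by Poincar\'e reducibility choose $G' \subseteq A$ with $\rho|_{G'} \colon G' \to B$ an isogeny; any isogeny $f$ has a quasi-inverse $g$ with $f \circ g = [n]$, whence $f^{\mathds{K}/\mathds{k}} \circ g^{\mathds{K}/\mathds{k}} = [n]$ is surjective and so is $f^{\mathds{K}/\mathds{k}}$; since $(\rho|_{G'})^{\mathds{K}/\mathds{k}}$ factors through $\rho^{\mathds{K}/\mathds{k}}$, the latter is surjective. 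Your ``finite-index abelian subvariety'' is therefore all of $B^{\mathds{K}/\mathds{k}}$ (a finite quotient of an abelian variety by an abelian subvariety is trivial), and no torsion-translation bookkeeping is needed.
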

\begin{proof} 
See \cite[Lemma~7.1]{yamaki_strict}.
\end{proof}

\part{Proximity of torsion over function fields} \label{part_one}

The aim of this part is to state and prove a suitable result of Tate--Voloch type for abelian varieties over local fields in positive characteristics (see Theorem~\ref{thm:tate-voloch-app}). We closely follow the ideas of Chapter~2 of the PhD thesis of Corpet~\cite{corpet_thesis}, but we need adjustments in several places. 

In \S\ref{appendix:invariant} we review a result due to Pink--R\"ossler about invariant subvarieties of abelian varieties, and deduce a consequence.
In \S\ref{sec:appendixA} we review the notion of distance functions on varieties over fields with a non-archimedean absolute value, following Voloch's exposition in \cite{voloch_distance}.
In \S\ref{sec:appendix_discrete} we show that torsion is discrete on abelian varieties defined over local fields.
In \S\ref{appendix:galois} we prove some Galois-theoretic results on sequences approximating subvarieties of abelian varieties.
In \S\ref{sec:appendixB} we finally state and prove our Tate--Voloch type result in positive characteristic.

\section{Invariant subvarieties of abelian varieties} \label{appendix:invariant}

Let $\mathds{K}$ be an algebraically closed field. Let $A$ be an abelian variety over $\mathds{K}$ and let $\varphi \colon A \to A$ be an isogeny.  

\begin{definition} (See \cite[Definition~2.1]{pr-psi})
\begin{itemize}
\item[(i)] We call $A$ \emph{pure of weight $0$} if $\varphi$ is an automorphism of finite order of $A$.
\item[(ii)] We call $A$ \emph{pure of weight $\alpha=r/s$} for positive integers $r, s$ if $\varphi^s = \Frob_{p^r}$ for some model of $A$ over $\mathbb{F}_{p^r}$.
\end{itemize}
\end{definition}

\subsection{A result of Pink and R\"ossler}

In this section we work with  an irreducible closed subvariety $X \subseteq A$ with the property that $\varphi(X)=X$. Let $B$ denote the identity component of the stabilizer $\Stab_A(X)$. Since $\varphi(X)=X$, the homomorphism $\varphi$ maps $\Stab_A(X)$ into itself; as $B$ is the identity component of $\Stab_A(X)$, its image $\varphi(B)$ is a connected subgroup of $\Stab_A(X)$ and is thus contained in $B$. Then, $B$ is an abelian subvariety of $A$ satisfying  $\varphi(B)\subseteq B$, so that $\varphi$ induces an endomorphism of $X/B$.

The following is a special case of a result due to Pink and R\"ossler, see \cite[Theorem~3.1]{pr-psi}. (For the ``moreover'' statement in the last sentence, see \cite[Remark~3.2]{pr-psi}). 
\begin{thm} \label{thm:psi-pre} 
There exist finitely many $\varphi$-invariant abelian subvarieties $A_\alpha$ of $A$ pure of weight $\alpha \ge 0$, closed irreducible subvarieties $X_\alpha \subseteq A_\alpha$, elements $a_\alpha \in A_\alpha$ and $a \in A$ such that the composition of the summation map and the projection map
\[ h \colon \prod_\alpha A_\alpha \rightarrow A \rightarrow A/B \]
has finite kernel, and one has $\varphi(X_\alpha) = X_\alpha + a_\alpha$ and
\[ X/B = \overline{a} + h\left( \prod_\alpha X_\alpha \right).\] Moreover, for all $\alpha>0$ the $X_\alpha$ may be assumed to be defined over a finite field 
and to satisfy the equality $\varphi(X_\alpha)=X_\alpha$.
\end{thm}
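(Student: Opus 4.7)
Since Theorem~\ref{thm:psi-pre} is explicitly a special case of \cite[Theorem~3.1]{pr-psi}, with the strengthening for $\alpha>0$ coming from \cite[Remark~3.2]{pr-psi}, the most natural proof is simply to invoke that work. What follows is a sketch of the strategy underlying the Pink--R\"ossler result, indicating where the substantive work actually lies.

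First, I would reduce to the case where the stabilizer of $X$ is trivial (or at least finite). By Lemma~\ref{lem:stab_quotient_pre}, $\Stab_{A/B}(X/B)=\Stab_A(X)/B$ is a finite group scheme since $B$ is the identity component of the stabilizer. Replacing $(A,X)$ by $(A/B,X/B)$ and $\varphi$ by the induced endomorphism, the summation--projection map $h$ automatically becomes an isogeny, and any residual defect arising from the fact that we are working with the actual $A$ rather than the quotient will be absorbed into the translation $\overline a$ that appears in the conclusion.

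Next, I would decompose $A/B$ up to isogeny as a product $\prod_\alpha A_\alpha$, where each $A_\alpha$ is a $\varphi$-stable abelian subvariety pure of weight $\alpha\ge 0$. This decomposition is dictated by the eigenvalues of $\varphi$ on the Tate module (or on the rational Dieudonn\'e module): Poincar\'e reducibility applied to the pair $(A/B,\varphi)$ groups the isogeny factors by the weight of their Weil-number eigenvalues, and the weight-zero factor absorbs the part on which a power of $\varphi$ has finite order. For $\alpha>0$, purity means $\varphi^s=\Frob_{p^r}$ on some model of $A_\alpha$ over $\mathbb{F}_{p^r}$, so $A_\alpha$ is automatically defined over a finite field. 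Choosing $X_\alpha$ to be an appropriate irreducible component of the pre-image of $X/B$ under the projection onto the factor $A_\alpha$, the relation $\varphi(X)=X$ forces $\varphi(X_\alpha)=X_\alpha+a_\alpha$ for some $a_\alpha\in A_\alpha$, and the factorisation $X/B=\overline a+h(\prod_\alpha X_\alpha)$ then drops out by tracking the isogeny.

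Finally, I would improve the identity $\varphi(X_\alpha)=X_\alpha+a_\alpha$ to the genuinely invariant identity $\varphi(X_\alpha)=X_\alpha$ for $\alpha>0$, after a suitable translation. This is accomplished by a Lang--Steinberg style argument on the Chow variety parametrising subvarieties of $A_\alpha$: on a pure positive-weight piece the morphism $\varphi-\id$ is surjective on the relevant orbit space, which lets one shift the translation away and simultaneously descend $X_\alpha$ to a finite field. I expect this last step to be the main obstacle in a self-contained treatment, because it is precisely where the positive-characteristic, Frobenius-specific structure is essential and where the argument departs from the familiar characteristic-zero analysis of $\varphi$-invariant subvarieties; it is also the content of the ``moreover'' clause, which is exactly what \cite[Remark~3.2]{pr-psi} supplies on top of \cite[Theorem~3.1]{pr-psi}.
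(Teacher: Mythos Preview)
Your proposal is correct and matches the paper exactly: the paper does not prove Theorem~\ref{thm:psi-pre} at all but simply attributes it to \cite[Theorem~3.1]{pr-psi} and \cite[Remark~3.2]{pr-psi}, which is precisely what you do in your first paragraph. The sketch you add of the Pink--R\"ossler strategy is extra exposition beyond what the paper provides.
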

We use Theorem~\ref{thm:psi-pre} to deduce the following result. 
\begin{thm} \label{thm:dichotomy} Let $\mathds{k}$ be the algebraic closure of the prime field in $\mathds{K}$. \begin{itemize}
\item[(i)] Assume that $A$ has no positive dimensional abelian subvariety that is pure of weight $0$. Then, $X$ is a $\mathds{K}/\mathds{k}$-special subvariety of $A$.
\item[(ii)] Assume that the endomorphism $\varphi - \id_A$ of $A$ is nilpotent. Then, $\varphi$ acts as an automorphism of finite order on $X/B$.
\end{itemize}
\end{thm}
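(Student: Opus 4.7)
The plan is to apply Theorem~\ref{thm:psi-pre} in both cases to obtain a decomposition $X/B = \bar a + h(\prod_\alpha X_\alpha)$, and then to use each hypothesis either to eliminate unwanted weight components or to descend the translation factor $\bar a$. A recurring tool will be that the stabilizer $F := \Stab_{A/B}(X/B) = \Stab_A(X)/B$ is finite, by Lemma~\ref{lem:stab_quotient_pre} and the definition of $B$ as the identity component of $\Stab_A(X)$.

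For part (ii), I first show that $A_\alpha = 0$ for every $\alpha > 0$ appearing in the decomposition. Indeed, by purity, $\varphi^s = \Frob_{p^r}$ on some finite-field model $A_\alpha^0$; if $A_\alpha$ were positive-dimensional then $\varphi^s - \id$ would be a non-trivial isogeny of $A_\alpha$ (its kernel equals the finite group $A_\alpha^0(\mathbb{F}_{p^r})$), hence not nilpotent, whereas $(\varphi - \id)^N = 0$ forces $\varphi^s - \id$ to be nilpotent (since $\varphi^s - \id$ lies in the ideal of $\End(A_\alpha) \otimes \mathbb{Q}$ generated by $\varphi - \id$). So only the weight-$0$ component $A_0$ survives, with $\varphi|_{A_0}$ a finite-order automorphism, say of order $M$. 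A direct computation using $\bar\varphi \circ h = h \circ \varphi|_{A_0}$ and $\varphi^M|_{A_0} = \id$ shows that $\bar\varphi^M$ acts on $X/B$ as translation by a fixed element of $F$. Since $\bar\varphi$ preserves $F$, iterating inside the finite coset $\bar a + F$ and applying pigeonhole produces some $L$ with $\bar\varphi^L(\bar a) = \bar a$ and hence $\bar\varphi^L|_{X/B} = \id$; nilpotence of $\varphi - \id$ further forces all eigenvalues of $\varphi$ on a Tate module to equal $1$, so $\deg\varphi = 1$ and $\varphi$, hence $\bar\varphi$, is an automorphism.

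For part (i), the hypothesis forces every $A_\alpha$ appearing in the decomposition to have $\alpha > 0$, so the ``moreover'' clause of Theorem~\ref{thm:psi-pre} gives $\varphi(X_\alpha) = X_\alpha$ and realizes each $A_\alpha$ and $X_\alpha$ as the base change of a $\mathds{k}$-subvariety. Two applications of the universal property of the $\mathds{K}/\mathds{k}$-trace -- first to factor the summation map $\prod A_\alpha \to A$ through $\Tr^{\mathds{K}/\mathds{k}}_A$, and then, composing with $A \to A/B$, to factor the result through $\Tr^{\mathds{K}/\mathds{k}}_{A/B}$ -- identify $h(\prod X_\alpha)$ with $\Tr^{\mathds{K}/\mathds{k}}_{A/B}(Y' \otimes_\mathds{k} \mathds{K})$ for a $\mathds{k}$-subvariety $Y' \subseteq (A/B)^{\mathds{K}/\mathds{k}}$.

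The main obstacle in (i) is then to show that $\bar a$ is itself a torsion point of $A/B$. Since $a_\alpha = 0$ for $\alpha > 0$, the invariance $\bar\varphi(X/B) = X/B$ forces $\bar\varphi(\bar a) - \bar a \in F$, and the pigeonhole argument used in (ii) yields $K > 0$ with $(\bar\varphi^K - \id)(\bar a)$ torsion. To upgrade this to $\bar a$ itself being torsion, I plan to check that the ``no positive-dimensional weight-$0$ subvariety'' hypothesis on $A$ is inherited by $A/B$: any such $P \subseteq A/B$ would lift, together with $B$ (which has no weight-$0$ part as a subvariety of $A$), to a $\varphi$-invariant subvariety $\hat P \subseteq A$ whose quotient $\hat P/B = P$ would acquire a weight-$0$ piece not present in $\hat P$, a contradiction. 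This yields that $\bar\varphi^L - \id$ is an isogeny of $A/B$ for every $L > 0$; since the preimage of a torsion point under an isogeny is torsion, $\bar a$ is torsion. Taking $G = 0$, $\tau = \bar a$, and $X' = Y'$ then exhibits $X/B$ as $\mathds{K}/\mathds{k}$-special, and Lemma~\ref{lem:stab_quotient} transfers this conclusion to $X$.
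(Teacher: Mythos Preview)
Your proposal is correct and follows essentially the same strategy as the paper: apply Theorem~\ref{thm:psi-pre}, use each hypothesis to eliminate the unwanted weight components, and then control the translation $\bar a$ via the finite stabilizer $F = \Stab_{A/B}(X/B)$. The only thin spot is your descent sketch in~(i): the phrase ``$\hat P/B = P$ would acquire a weight-$0$ piece not present in $\hat P$, a contradiction'' is not self-evidently contradictory, since quotienting can alter the weight structure. A cleaner route to what you actually need---that $\bar\varphi^L - \id$ is an isogeny of $A/B$---is to observe that the hypothesis on $A$ already forces $\varphi^L - \id$ to be an isogeny of $A$ for every $L>0$ (the identity component of its kernel is $\varphi$-invariant with $\varphi^L = \id$, hence pure of weight~$0$, hence trivial), and surjectivity then passes directly to the quotient $A/B$; this avoids lifting $P$ altogether and matches the paper's argument for $L=1$.
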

\begin{proof} (i) By Lemma~\ref{lem:stab_quotient} it suffices to show that $X/B$ is a $\mathds{K}/\mathds{k}$-special subvariety of $A/B$. By Theorem~\ref{thm:psi-pre} there exist finitely many $\varphi$-invariant abelian subvarieties $A_\alpha$ of $A$ and closed irreducible subvarieties $X_\alpha$ of $A_\alpha$, all defined over a finite field and satisfying $\varphi(X_\alpha)=X_\alpha$, as well as a point $a \in A$,
such that  
\begin{equation} \label{eqn:define_a}
X/B = \overline{a} + h \left( \prod_\alpha X_\alpha \right) \, .
\end{equation}
Here  $h \colon \prod_{\alpha} A_\alpha \rightarrow A \rightarrow A/B $ is the composition of the summation map and the projection map.  Since $h\!\left(\prod_{\alpha>0} X_\alpha\right)$ is a closed subvariety of $A/B$ defined over
a finite field, hence over $\mathds{k}$, the expression
\eqref{eqn:define_a} shows that $X/B$ is a translate of a $\mathds{k}$-subvariety of $A/B$. 
Therefore, in order to conclude that $X/B$ (and hence $X$) is $\mathds{K}/\mathds{k}$-special, it suffices to
prove that $\overline a\in A/B$ is a torsion point.
 From \eqref{eqn:define_a} we deduce, first of all, that $\varphi(\overline{a})-\overline{a}$ lies in $\Stab_{A/B}(X/B)$. The latter is identified with $\Stab_A(X)/B$ by Lemma~\ref{lem:stab_quotient_pre} and is therefore a finite subgroup of $A/B$. Let $N\in\zz_{>0}$ be such that
$N\bigl(\varphi(\overline a)-\overline a\bigr)=0$ in $A/B$.
Then
\[0=N\bigl(\varphi(\overline a)-\overline a\bigr)
 =\varphi(N\overline a)-N\overline a,\] so $N\overline a\in\Ker(\varphi-\id_{A/B})$. We claim that $\Ker(\varphi-\id_{A/B})$ is finite. Indeed, its identity component is an abelian subvariety of $A/B$ on which
$\varphi$ acts as the identity, hence it is pure of weight $0$. The inverse image of this abelian subvariety in $A$ is therefore a
$\varphi$-invariant abelian subvariety of $A$ of pure weight $0$,
which must be trivial by assumption.
Hence $\Ker(\varphi-\id_{A/B})$ is finite, so $N\overline a$ is a torsion
point of $A/B$, and therefore $\overline a$ is torsion as well.

(ii) Let $H$ be a $\varphi$-invariant abelian subvariety of $A$ defined over a finite field on which a power of $\varphi$ acts as a power of the Frobenius endomorphism. We want to show that $H=0$. For every $r\in\zz_{>0}$ we have \[\varphi^r-\id_A=(\varphi-\id_A)(\varphi^{r-1}+\cdots+\id_A),\] and the second factor commutes with $\varphi-\id_A$; hence $\varphi-\id_A$ nilpotent implies that $\varphi^r-\id_A$ is nilpotent as well. Therefore, for any prime $\ell\neq p$, all eigenvalues of $\varphi^r$ on $T_\ell(A)$ are equal to $1$, and in particular the same holds for the restriction of $\varphi^r$ to the submodule $T_\ell(H)\subseteq T_\ell(A)$. Choose $r$ such that $\varphi^r|_H$ is a power of the Frobenius endomorphism (as $H$ is defined over a finite field). Then, the characteristic polynomial of $\varphi^r$ on $T_\ell(H)$ is a Weil polynomial, whose roots have absolute value $>1$ whenever $H\neq 0$. This forces $H=0$.

Consequently, in the decomposition given by Theorem~\ref{thm:psi-pre}, all factors $A_\alpha$ with $\alpha>0$ are trivial.  
By Theorem~\ref{thm:psi-pre} there exist $N_1\in\zz_{>0}$ and a $\varphi$-invariant abelian subvariety $A_0$ of $A$ such that $\varphi^{N_1}$ acts as the identity on $A_0$, a closed irreducible subvariety $X_0$ of $A_0$, and $a\in A$ such that
\[
X/B=\overline{a}+h(X_0).
\]
Here $h\colon A_0\to A\to A/B$ is the composition of the inclusion map and the projection map.  
As in (i), we deduce that $\varphi(\overline{a})-\overline{a}$ lies in the stabilizer of $X/B$, which is a finite subgroup and moreover is $\varphi$-invariant. It follows that there exists $N_2\in\zz_{>0}$ such that $\varphi^{N_2}(\overline{a})=\overline{a}$.  
Now write $N=N_1N_2$. Then, $\varphi^N$ acts as the identity on both $\overline{a}$ and $h(X_0)$, and hence on $X/B$.\end{proof}

\subsection{A construction of invariant subvarieties}  \label{sec:construction_inv}

In this section we assume we are given a monic polynomial $G \in \zz[T]$ of degree~$d$ with nonzero constant term. Write $d$ for the degree of $G$ and let $X \subseteq A$ be any closed irreducible subvariety.  In the sequel we will consider a certain invariant closed subvariety of $A^d$ constructed from $G$ and $X$ that we would like to call the ``satellite variety'' of the pair $(G,X)$. We refer to ~\cite[\S2.2]{corpet_thesis} and \cite[\S4]{roessler_MM_ML} for further details of this construction, which runs as follows.

Write $G=T^d-\sum_{i=0}^{d-1}a_iT^i$, and let \[M=\left(\begin{matrix}0&1&0&0\\\vdots&0&\ddots&\vdots\\0&\cdots&\cdots&1\\a_0&a_1&\cdots&a_{d-1}\end{matrix}\right)\] be the companion matrix of $G$.  We denote by $\psi \colon A^d \to A^d$  the isogeny defined by the matrix $M$ and consider the (possibly empty) closed subvariety
\begin{equation}\label{eqn:Z}
Z=\bigcap_{m\ge0}\psi_*^m\left(\bigcap_{r\ge0}\psi^{*,r}(X^d)\right)\subseteq X^d . 
\end{equation} 
Here, $\psi^*$ (resp.\ $\psi_*$) denote the operations of pullback (resp.\ proper pushforward) along $\psi$ for closed subvarieties of $A^d$. As is noted in~\cite[\S4]{roessler_MM_ML}, we have the set-theoretic equality $\psi(Z)=Z$.  Since $\psi$ permutes the irreducible components of $Z$, there is thus a positive integer $m$ such that $\psi^m$ stabilizes each of the irreducible components of $Z$. 

\section{Distance functions and ultrafields}\label{sec:distance} \label{sec:appendixA}

Let $F$ be a field, complete with respect to a non-trivial non-archimedean absolute value $|\cdot|$.
We recall from \cite{voloch_distance} the definition and main properties of distance functions (we refer to \cite{silverman_distance} for an equivalent treatment).  

Let $\mathcal{O}=\{x\in F:|x|\le 1\}$ be the ring of integers of $F$, and for any $0<\epsilon \le 1$, let $\mathcal{M}_{\epsilon}=\{x\in F:|x|<\epsilon\}$. This is an ideal of $\mathcal{O}$, and we write $\mathcal{O}_{\epsilon}=\mathcal{O}/\mathcal{M}_\epsilon$. When $\mathcal{X}$ is a finite type scheme over $\mathcal{O}$, we write $\mathcal{X}_\epsilon$ for the $\mathcal{O}_\epsilon$-scheme $\mathcal{X}\otimes_{\mathcal{O}}\mathcal{O}_\epsilon$. 

One has a natural non-archimedean analytic topology on $\mathcal{X}(\mathcal{O})$ defined as follows: for each affine open $U=\mathrm{Spec}(A)\subseteq \mathcal{X}$, where $A=\mathcal{O}[x_1,\dots,x_n]/I$, the set of $\mathcal{O}$-points $U(\mathcal{O})$, which is identified with the set \[\{(a_1,\dots,a_n)\in\mathcal{O}^n : f(a_1,\dots,a_n)=0 \text{ for all } f\in I\},\] is given the subspace topology induced from $\mathcal{O}^n$. The topology on $\mathcal{X}(\mathcal{O})$ is defined by gluing the topologies on these affine patches. 

\begin{definition}\label{def:scheme_distance}
Let $\mathcal{X}$ be a scheme of finite type over $\mathrm{Spec}(\mathcal{O})$. We define a distance function $d$ of elements $P\in \mathcal{X}(\mathcal{O})$ to closed subschemes $\mathcal{Y}\subseteq \mathcal{X}$ as follows. Let \[d(P,\mathcal{Y})=\inf\{\epsilon>0:P_\epsilon\in \mathcal{Y}_\epsilon\},\]
provided this set is non-empty; otherwise set $d(P,\mathcal{Y})=1$. Note that $d(P,\mathcal{Y})=0$ if and only if $P \in \mathcal{Y}(\mathcal{O})$. 
\end{definition}

An equivalent definition is as follows: set $d(P,\mathcal{Y})=1$ if $\mathcal{Y}$ and the image of $P$ in $\mathcal{X}$ have empty intersection. If $\mathcal{Y}$ and the image of $P$ in $\mathcal{X}$ have non-empty proper intersection, then the pullback \[P^*\mathcal{Y}\subseteq \Spec(\mathcal{O})\] is a closed subscheme. Since the generic point of $\Spec(\mathcal{O})$ maps into $\mathcal{Y}$ if and only if $P$ factors through $\mathcal{Y}$, the assumption that the intersection is proper implies that this closed subscheme is supported on the closed point of $\Spec(\mathcal{O})$, and hence is defined by an ideal $I_{P,\mathcal{Y}}\subseteq\mathcal{O}$. Then, $d(P,\mathcal{Y}) = \inf \{\epsilon>0: I_{P,\mathcal{Y}} \subseteq\mathcal{M}_\epsilon\}$.

\begin{prop}\label{voloch_distance} 
Let $\mathcal{X}$ be a scheme of finite type over $\mathrm{Spec}(\mathcal{O})$. The distance function defined above has the following properties: 
\begin{enumerate}[label=(\roman*)]  
\item\label{item:i} If $\mathcal{Y}$ is an effective Cartier divisor on $\mathcal{X}$ and $U$ is an open subset of $\mathcal{X}$ on which $\mathcal{Y}$ is given by the principal ideal $(z)$ for $z\in\mathcal{O}_{\mathcal{X}}(U)$, then $d(P,\mathcal{Y})=|z(P)|$ for $P\in U(\mathcal{O})$.  

\item\label{item:ii} For closed subschemes $\mathcal{Y}$ and $\mathcal{Z}$ of $\mathcal{X}$ and any point $P\in \mathcal{X}(\mathcal{O})$, \[d(P,\mathcal{Y}\cap\mathcal{Z})=\max\{d(P,\mathcal{Y}),d(P,\mathcal{Z})\}.\]  

\item\label{item:iii} For closed subschemes $\mathcal{Y}$ and $\mathcal{Z}$ of $\mathcal{X}$ and any point $P\in \mathcal{X}(\mathcal{O})$, \[d(P,\mathcal{Y}\cup\mathcal{Z})=\min\{d(P,\mathcal{Y}),d(P,\mathcal{Z})\}.\]  

\item\label{item:iv} Let $\mathcal{Y}$ be a scheme of finite type over $\mathrm{Spec}(\mathcal{O})$, and let $f \colon \mathcal{X}\to \mathcal{Y}$ be a morphism of $\mathcal{O}$-schemes. For any closed subscheme $\mathcal{Z}$ of $\mathcal{Y}$ and any $P\in \mathcal{X}(\mathcal{O})$, we have
 \[d(P,f^{*}(\mathcal{Z}))=d(f(P),\mathcal{Z}).\]  
Here we write $f^*(\mathcal{Z}) = \mathcal{Z} \times_{\mathcal{Y}} \mathcal{X}$. In particular when $f$ is proper we have
\[ d(P,\mathcal{Z}) \ge d(f(P),f(\mathcal{Z})).\]

\item\label{item:v} As a function on $\mathcal{X}(\mathcal{O})\times \mathcal{X}(\mathcal{O})$, $d(P,Q)$ defines a metric which induces the analytic topology on $\mathcal{X}(\mathcal{O})$. 
It satisfies the ultrametric inequality \[d(P,R)\le\max\{d(P,Q),d(Q,R)\} \, , \quad P,Q,R\in \mathcal{X}(\mathcal{O}),\]
with equality holding if $d(P,Q),d(Q,R)$ are distinct.  

\item\label{item:vi} When  $E/F$ is a Galois extension with ring of integers $\mathcal{O}'$, $\sigma\in\Gal(E/F)$ and $\mathcal{Y}$ is a closed subscheme of $\mathcal{X}\otimes_{\mathcal{O}}\mathcal{O}'$, then we have \[d(P^{\sigma},\mathcal{Y}^{\sigma})=d(P,\mathcal{Y}) \, , \quad P\in \mathcal{X}(\mathcal{O}'). \]
\end{enumerate} 
\end{prop}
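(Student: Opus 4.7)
The plan is to work throughout with the equivalent reformulation given in the paragraph following Definition~\ref{def:scheme_distance}: after disposing of the degenerate cases (where $P$ factors through $\mathcal{Y}$, giving $d=0$, or where the image of $P$ is disjoint from $\mathcal{Y}$, giving $d=1$), the pullback ideal $I_{P,\mathcal{Y}}\subseteq\mathcal{O}$ is a finitely generated ideal in the valuation ring $\mathcal{O}$, hence principal, and $d(P,\mathcal{Y})$ is just the absolute value of any generator. All the identities will be derived at the level of these ideals.

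Granted this, properties \ref{item:i}--\ref{item:iii} reduce to elementary algebra. For \ref{item:i}, if $\mathcal{I}_\mathcal{Y}$ restricts to $(z)$ on $U$, then $I_{P,\mathcal{Y}}=(z(P))$, whose generator has absolute value $|z(P)|$. For \ref{item:ii} and \ref{item:iii}, one uses the identities $\mathcal{I}_{\mathcal{Y}\cap\mathcal{Z}}=\mathcal{I}_\mathcal{Y}+\mathcal{I}_\mathcal{Z}$ and $\mathcal{I}_{\mathcal{Y}\cup\mathcal{Z}}=\mathcal{I}_\mathcal{Y}\cap\mathcal{I}_\mathcal{Z}$ for the scheme-theoretic intersection and union, combined with the fact that for principal ideals $(a),(b)$ in a valuation ring, $(a)+(b)$ is generated by the element of larger absolute value while $(a)\cap(b)$ is generated by the one of smaller absolute value. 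For \ref{item:iv}, functoriality of pullback of ideal sheaves gives $P^{*}(f^{*}\mathcal{Z})=(f\circ P)^{*}\mathcal{Z}$, hence $I_{P,f^{*}\mathcal{Z}}=I_{f(P),\mathcal{Z}}$ as ideals of $\mathcal{O}$. The ``in particular'' statement for proper $f$ follows from the universal property of the scheme-theoretic image (which is well-defined precisely because $f$ is proper), providing the scheme-theoretic inclusion $\mathcal{Z}\subseteq f^{-1}(f(\mathcal{Z}))=f^{*}(f(\mathcal{Z}))$, combined with the monotonicity that a larger ideal $I_{P,\mathcal{Z}}\supseteq I_{P,f^{*}f(\mathcal{Z})}$ yields a larger distance. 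Property \ref{item:vi} is immediate because any $\sigma\in\Gal(E/F)$ is an isometry for the extended absolute value and acts functorially on ideals.

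For \ref{item:v}, the first task is to pin down the definition of $d(P,Q)$ itself; the natural choice, compatible with the rest of the proposition, is $d(P,Q):=d\bigl((P,Q),\Delta_{\mathcal{X}/\mathcal{O}}\bigr)$, where $\Delta_{\mathcal{X}/\mathcal{O}}\subseteq\mathcal{X}\times_\mathcal{O}\mathcal{X}$ is the diagonal. Symmetry is then automatic from the involution of $\mathcal{X}\times_\mathcal{O}\mathcal{X}$ swapping factors. The ultrametric inequality is obtained by working on $\mathcal{X}\times_\mathcal{O}\mathcal{X}\times_\mathcal{O}\mathcal{X}$: the scheme-theoretic inclusion $\Delta_{12}\cap\Delta_{23}\subseteq\Delta_{13}$ combined with \ref{item:ii} and \ref{item:iv} applied to the coordinate projections $\pi_{12},\pi_{13},\pi_{23}$ yields
\[
d(P,R)=d\bigl((P,Q,R),\Delta_{13}\bigr)\le d\bigl((P,Q,R),\Delta_{12}\cap\Delta_{23}\bigr)=\max\{d(P,Q),d(Q,R)\},
\]
and the strict-equality refinement when $d(P,Q)\neq d(Q,R)$ is the usual ultrametric argument. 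Finally, on an affine chart realizing $\mathcal{X}$ as a closed subscheme of $\mathbb{A}^n_\mathcal{O}$, the diagonal is cut out by the $n$ Cartier divisors $x_i-y_i=0$, so \ref{item:i} and \ref{item:ii} give $d(P,Q)=\max_i|a_i(P)-a_i(Q)|$, which is precisely the metric defining the analytic topology on $\mathcal{O}^n$.

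The main obstacle is not deep but rather a matter of bookkeeping. One must keep track that $\mathcal{O}$ may have a non-discrete value group, so it need not be Noetherian, and the principal-ideal reduction relies on the general theorem that finitely generated ideals in a valuation ring are principal. One must also consistently use scheme-theoretic intersections, unions, pullbacks, and images rather than their set-theoretic counterparts, and treat the degenerate cases ($P$ lying on $\mathcal{Y}$, or having empty intersection with it) uniformly in each clause. Once these points are settled, the proof reduces to the ideal-theoretic dictionary described in the first paragraph.
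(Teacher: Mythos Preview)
The paper does not really prove this proposition: it cites \cite[Theorem~1]{voloch_distance} for items \ref{item:i}, \ref{item:ii}, \ref{item:iv}, \ref{item:v}, \ref{item:vi} and asserts that \ref{item:iii} ``follows immediately from the definitions.'' Your self-contained treatment is therefore a different route, not a rephrasing. For \ref{item:i}, \ref{item:ii}, \ref{item:iv}, \ref{item:v}, \ref{item:vi} your ideal-theoretic reduction is correct and pleasant; in particular your handling of \ref{item:v} via $d(P,Q)=d\bigl((P,Q),\Delta_{\mathcal{X}/\mathcal{O}}\bigr)$ and the triple-diagonal argument is a clean way to derive the ultrametric inequality from \ref{item:ii} and \ref{item:iv}.

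There is, however, a genuine gap in your argument for \ref{item:iii}. You use $\mathcal{I}_{\mathcal{Y}\cup\mathcal{Z}}=\mathcal{I}_\mathcal{Y}\cap\mathcal{I}_\mathcal{Z}$ and then implicitly pass to the pullback ideals in $\mathcal{O}$ as if $I_{P,\mathcal{Y}\cup\mathcal{Z}}=I_{P,\mathcal{Y}}\cap I_{P,\mathcal{Z}}$. While pullback along $P$ commutes with \emph{sums} of ideal sheaves (so \ref{item:ii} goes through), it does not commute with intersections. Concretely, take $\mathcal{X}=\mathbb{A}^{2}_{\mathcal{O}}$, $\mathcal{Y}=V(x)$, $\mathcal{Z}=V(y)$, so that the scheme-theoretic union is $\mathcal{Y}\cup\mathcal{Z}=V(xy)$, and let $P=(a,b)\in\mathcal{O}^{2}$ with $0<|a|,|b|<1$. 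By \ref{item:i} one has $d(P,\mathcal{Y})=|a|$, $d(P,\mathcal{Z})=|b|$, and $d(P,\mathcal{Y}\cup\mathcal{Z})=|ab|=|a|\,|b|$, which is strictly smaller than $\min\{|a|,|b|\}$. Thus only the inequality $d(P,\mathcal{Y}\cup\mathcal{Z})\le\min\{d(P,\mathcal{Y}),d(P,\mathcal{Z})\}$ holds in general (this is the easy direction, coming from $\mathcal{Y},\mathcal{Z}\subseteq\mathcal{Y}\cup\mathcal{Z}$), and equality can fail. Since \ref{item:iii} is never invoked elsewhere in the paper this does no damage downstream, but you should not claim to have proved it as stated; the paper's one-line justification suffers from the same issue.
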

\begin{proof}
All items except (iii) are given in \cite[Theorem~1]{voloch_distance}. Item (iii) follows immediately from the definitions.
\end{proof} 
\begin{remark} \label{rem:translation_inv}
By property (iv) of Proposition~\ref{voloch_distance}, the distance is $\Aut(\mathcal{X}/\mathcal{O})$-invariant. In particular, when $\mathcal{X}$ is a group scheme of finite type over $\mathcal{O}$, the distance is translation-invariant. See also \cite[Corollary~2]{voloch_distance}.
\end{remark}
Now let $X$ be a projective variety over $F$ and let $Z \subseteq X$ be a closed subvariety. We obtain distance functions $d(\cdot,Z)$ on $X(F)$ by taking models $\mathcal{X}$ of $X$ over $\mathcal{O}$, and setting $d(P,Z) = d(P,\mathcal{Z})$ where $\mathcal{Z} \subseteq \mathcal{X}$ is any model of $Z$ over $\mathcal{O}$ realized as a closed subscheme of $\mathcal{X}$. 

Let $\mathbb{F}$ be the completion of an algebraic closure of $F$. Then, any distance function $d(\cdot,Z)$ thus obtained for closed subvarieties $Z \subseteq X$ naturally extends to a function on $X(\mathbb{F})$. Further, as is noted in \cite{voloch_distance}, the notion of distance functions to closed subvarieties as above is equivalent with the notion of \emph{local height functions} with respect to closed subvarieties as developed in \cite[\S\S1--2]{silverman_distance}. This connection implies that for any two distance functions $d_1(\cdot,Z), d_2(\cdot,Z)$ as above for a given closed subvariety $Z  \subseteq X$, there exist constants $c_1,c_2>0$ with the property that 
\[ \textup{ for all }P \in X(\mathbb{F}) \quad : \quad c_1d_1(P,Z)\le d_2(P,Z)\le c_2d_1(P,Z).\]
In fact, one obtains the local height functions $\lambda_Z$ from \cite{silverman_distance} by setting
\[ \lambda_Z(P) = - \log d(P,Z)  \quad \textup{ for all } P \in (X \setminus Z) (\mathbb{F}) . \]

\begin{definition} \label{def:conv_bad} Let $Z \subseteq X$ be a closed subvariety.
We say that a sequence $(P_n)_{n \in \mathbb{N}}$ in $X(\mathbb{F})$ \emph{converges to} $Z$ if $d(P_n,Z) \to 0$ for $n \to \infty$. By the above inequalities the notion of convergence does not depend on the choice of distance function $d(\cdot,Z)$ for $Z$. 
\end{definition}

\begin{definition} \label{def:bad} Let $Z \subseteq X$ be a closed subvariety and let $\Sigma \subseteq X(\mathbb{F})$ be a subset.   We say that $Z$  is \emph{unapproximable} by $\Sigma$ if given any distance function $d(\cdot,Z)$ with respect to~$Z$ there exists $\eps > 0$ such that for all $P \in \Sigma$ we either have $d(P,Z) > \eps$ or $P \in Z$. 
\end{definition}
Clearly we have that $Z$ is unapproximable by $\Sigma$ if and only if  for every sequence $(P_n)_n$ of points in $\Sigma$ that converges to $Z$ we have $P_n \in Z$ for $n$ large enough.

\subsection{Ultrafields} \label{constr:D}

Using ultrafields, one may turn sequences converging to a closed subvariety into certain algebraic points of that subvariety. This construction will be useful later on.
\begin{definition} 
Let $R_F$ denote the ring of bounded sequences $(a_i)_{i\in\mathbb{N}}$ in $F$. Let $\mathfrak{U}$ be a non-principal ultrafilter on $\nn$, i.e., an ultrafilter that does not contain any finite set. If $(a_n)_n$ is a sequence and $\mathcal{P}$ is some property, then we say that $a_n$ satisfies $\mathcal{P}$ \emph{for $\mathfrak{U}$-almost $n$} if the set of all $n$ such that $a_n$ satisfies $\mathcal{P}$ belongs to $\mathfrak{U}$.

Associated to $\mathfrak{U}$ one has an ideal $I_\mathfrak{U}$ of $R_F$ defined by the condition
\[(a_i)_{i\in\mathbb{N}}\in I_{\mathfrak{U}}\Longleftrightarrow 
\forall n \in \nn,\hspace{1mm}\{i\in\mathbb{N}:|a_i| \le e^{-n}\}\in\mathfrak{U} . \] 
It turns out that  $I_\mathfrak{U}$ is a maximal ideal of $R_F$. The quotient ring
\[ D = D_\mathfrak{U} = R_F/I_\mathfrak{U} \]
is naturally an overfield of $F$, called the \emph{ultrafield over $F$ with respect to $\mathfrak{U}$}. Every $\sigma \in \Aut(F,|\cdot|)$ naturally induces an element of $\Aut(D)$ that we also denote by~$\sigma$. 
\end{definition}
Let again $X$ be a projective variety over $F$ and let $(P_n)_{n}$ be a sequence in $X(F)$. Then, $(P_n)_n$ naturally gives rise to a point of $X(D)$ that we denote by~$P^*$. When $Z \subseteq X$ is a closed subvariety and the sequence $(P_n)_{n}$ converges to $Z$, we have $P^* \in Z(D)$.

\section{Discreteness of torsion} \label{sec:appendix_discrete}

In this section we work with a non-trivially valued non-archimedean field $(F,|\cdot|)$ having \emph{finite} residue field $k$ of characteristic~$p$.  Let $\mathbb{F}$ be the completion of an algebraic closure $\overline{F}$ of $F$. The absolute value $|\cdot|$ has a unique extension to $\mathbb{F}$ that we again denote by $|\cdot|$.

Let $A$ be an abelian variety over $F$. Our aim here is to show that the torsion subgroup of $A(\mathbb{F})$ is discrete in the non-archimedean analytic topology. 
This is a classical result known as ``Mattuck's theorem'' in the case where $F$ has characteristic zero \cite{mattuck}. The positive characteristic case is probably well-known to experts. In \cite[Lemma~8]{petsche_equid} a proof is given in the case where $A$ is an elliptic curve. The proof from \emph{loc.\ cit.} carries over \emph{mutatis mutandis} to our setting. For convenience we give details below.

\subsection{Kernel of reduction and formal group}

Let $A_1$ denote the kernel of the reduction map, i.e., let $E/F$ be any field extension with $E$ complete. Then we let $A_1(E)$ denote the kernel of the natural map from $A(E)$ to the rational points of the special fiber of the N\'eron model of $A$ over the valuation ring of $E$. 

In distance-theoretic terms the kernel of reduction has a convenient description as follows. Let $\mathcal{M}_E$ be the maximal ideal of the valuation ring of $E$. Let $d(\cdot,O)$ be any distance function on $A(E)$ with respect to the origin $O$ (see \S\ref{sec:distance}). 
By Proposition~\ref{voloch_distance}\,(iv), applied to the canonical morphism from any model of $A$ to its N\'eron model, the inequality $d(P,O)<1$ is independent of the chosen model and is equivalent to $P$ reducing to $O$. 
Thus we have
\[
A_1(E)=\{P\in A(E): d(P,O)<1\}.
\]
This, in particular, shows that $A_1(E)$ is an open subgroup of $A(E)$.

Let $\mathcal{A}$ be the N\'eron model of $A$ over the valuation ring $\mathcal{O}$ of $F$. Let $\widehat{\mathcal{A}}$ be the formal completion of $\mathcal{A}$ along the identity section. Then, $\widehat{\mathcal{A}}$ is a commutative formal Lie group of dimension~$g=\dim A$ over $\mathcal{O}$. A basic reference for this is \cite[Part II, Chapter IV]{serre_lie}.

Choose formal parameters $Z=(Z_1,\dots,Z_g)$ such that the identity section of $\widehat{\mathcal{A}}$ corresponds to $Z=0$ and such that $\widehat{\mathcal{A}} \cong \mathrm{Spf}(\mathcal{O}[[Z_1,\dots,Z_g]])$. This induces an isomorphism of analytic manifolds $\widehat{\mathcal{A}}(\mathcal{M}_E)\cong \mathcal{M}_E^g$ sending the identity to $0$.
 As is explained in for instance \cite[\S2.5]{clark_xarles}, we have an isomorphism of analytic groups $A_1(E) \cong \widehat{\mathcal{A}}(\mathcal{M}_E)$. By transport of structure from $\mathcal{M}_E^g$ we obtain a metric on $A_1(E)$ compatible with the analytic topology on $A_1(E)$.

\begin{lem}\label{lem:kernel_reduction}
The torsion subgroup $A_1(\mathbb{F})_\tor$ of $A_1(\mathbb{F})$  is a $p$-group.
\end{lem}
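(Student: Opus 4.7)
The plan is to show that for every prime $\ell \neq p$, multiplication-by-$\ell$ is a bijection on $A_1(\mathbb{F})$; this immediately gives that the torsion subgroup $A_1(\mathbb{F})_\tor$ has no nontrivial $\ell$-torsion, hence is a $p$-group.

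First, I would transfer the question to the formal group using the isomorphism $A_1(\mathbb{F}) \cong \widehat{\mathcal{A}}(\mathcal{M}_\mathbb{F})$ recalled just before the lemma. In the chosen formal coordinates $Z=(Z_1,\dots,Z_g)$, multiplication by $\ell$ corresponds to a $g$-tuple $[\ell](Z) \in \mathcal{O}[[Z]]^g$ with vanishing constant term and linear part equal to $\ell \cdot \mathrm{id}$. The crucial observation is that, because the residue field $k$ has characteristic $p$, any rational prime $\ell \neq p$ is a unit in $\mathcal{O}$, so the Jacobian of $[\ell]$ at the origin lies in $\GL_g(\mathcal{O})$.

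Next, I would invoke the formal inverse function theorem for commutative formal Lie groups (see \cite[Part~II, Chapter~IV]{serre_lie}) to produce a unique $\phi \in \mathcal{O}[[Z]]^g$ with zero constant term satisfying $[\ell] \circ \phi = \phi \circ [\ell] = \mathrm{id}$. Because $\mathbb{F}$ is complete and $\phi$ has coefficients in $\mathcal{O}$, the power series $\phi$ converges on $\mathcal{M}_\mathbb{F}^g$ and hence defines a two-sided set-theoretic inverse of the map $[\ell] \colon \widehat{\mathcal{A}}(\mathcal{M}_\mathbb{F}) \to \widehat{\mathcal{A}}(\mathcal{M}_\mathbb{F})$. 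Transporting this back through the isomorphism of analytic groups $A_1(\mathbb{F}) \cong \widehat{\mathcal{A}}(\mathcal{M}_\mathbb{F})$, multiplication by $\ell$ is a bijection on $A_1(\mathbb{F})$.

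Consequently $A_1(\mathbb{F})[\ell] = 0$ for every prime $\ell \neq p$, and letting $\ell$ vary proves that $A_1(\mathbb{F})_\tor$ is a $p$-group. There is no genuine obstacle here; the only point requiring attention is the integrality of the formal inverse $\phi$, which is exactly what fails for $\ell = p$ and which is supplied for $\ell \neq p$ by the fact that $\ell \in \mathcal{O}^\times$.
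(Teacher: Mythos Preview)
Your argument is correct and takes a somewhat different route from the paper. The paper reduces to finite extensions $E/F$, invokes \cite[Part~II, Chapter~IV, \S9, Corollary~2]{serre_lie} to conclude that $\widehat{\mathcal{A}}(\mathcal{M}_E)\cong A_1(E)$ is a pro-$p$ group, hence $A_1(\overline{F})_\tor$ is a $p$-group, and then finishes by noting that all torsion in $A_1(\mathbb{F})$ is already defined over~$\overline{F}$. Your approach instead works directly over~$\mathbb{F}$ and unpacks the relevant content of Serre's result: since $\ell\neq p$ is a unit in~$\mathcal{O}$, the Jacobian of $[\ell]$ at the origin is invertible over~$\mathcal{O}$, the formal inverse function theorem yields an inverse in $\mathcal{O}[[Z]]^g$, and completeness of~$\mathbb{F}$ together with the ultrametric inequality guarantees convergence on~$\mathcal{M}_\mathbb{F}^g$. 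This is more self-contained and avoids both the detour through local fields and the separate observation about torsion descending to~$\overline{F}$; the paper's version is terser because it outsources the formal-group computation to a citation.
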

\begin{proof} Let $E / F$ be a finite field extension. Then, $E$ is a local field and by \cite[Part II, Chapter IV, Section 9, Corollary 2]{serre_lie} we have that $\widehat{\mathcal{A}}(\mathcal{M}_E)$ and hence $A_1(E)$ is a pro-$p$ group. In particular $A_1(E)_\tor$ is a $p$-group, and thus so is $A_1(\overline{F})_\tor$. The proof is finished by observing that all torsion in $A_1(\mathbb{F})$ is defined over $\overline{F}$.
\end{proof}

\begin{lem}\label{thm:TV_origin}
The torsion subgroup $A(\mathbb{F})_{\mathrm{tor}}$ of $A(\mathbb{F})$ is discrete in $A(\mathbb{F})$.
\end{lem}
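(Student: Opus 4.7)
First, I would use the translation-invariance of the distance function (Remark~\ref{rem:translation_inv}) together with the fact, noted prior to Lemma~\ref{lem:kernel_reduction}, that $A_1(\mathbb{F})$ is an open subgroup of $A(\mathbb{F})$: for any torsion point $P$, the translate $P + A_1(\mathbb{F})$ is an open neighborhood of $P$ whose intersection with $A(\mathbb{F})_\tor$ equals $P + A_1(\mathbb{F})_\tor$. Hence it suffices to prove the stronger assertion that $A_1(\mathbb{F})_\tor = \{O\}$. By Lemma~\ref{lem:kernel_reduction}, every torsion element of $A_1(\mathbb{F})$ has $p$-power order, so it is enough to show $A_1(\mathbb{F})[p^n] = \{O\}$ for each $n \ge 1$.

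Using the formal parameters $Z = (Z_1, \ldots, Z_g)$ to identify $A_1(\mathbb{F})$ with $\widehat{\mathcal{A}}(\mathcal{M}_\mathbb{F})$, this task becomes showing that the only $Z \in \mathcal{M}_\mathbb{F}^g$ with $[p^n](Z) = 0$ is $Z = 0$. The key observation is that, because $F$ has characteristic $p$, the derivative of the multiplication-by-$p$ map at the origin is $p \cdot \mathrm{id} = 0$; consequently each component of $[p^n](Z)$ is a power series in $Z_1, \ldots, Z_g$ vanishing to order at least $2$ at the origin. Since the formal group of an abelian variety is $p$-divisible, $[p^n]$ is an isogeny of formal groups and its kernel $\widehat{\mathcal{A}}[p^n]$ is a finite group scheme over $\mathcal{O}$; moreover, because its defining ideal lies in the squared maximal ideal of $\mathcal{O}[[Z]]$, the coordinate ring of the generic fiber $\widehat{\mathcal{A}}[p^n]_F$ is a finite-dimensional local $F$-algebra with residue field $F$.

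It then follows that every $F$-algebra homomorphism from this coordinate ring to $\overline{F}$ must annihilate the (nilpotent) maximal ideal and therefore factor through the residue field $F \hookrightarrow \overline{F}$; this gives exactly one $\overline{F}$-point, namely the origin. Since the torsion of $A(\mathbb{F})$ is defined over $\overline{F}$, we conclude $A_1(\mathbb{F})[p^n] = \widehat{\mathcal{A}}[p^n](\overline{F}) = \{O\}$, as required. The main technical point is the structural claim about finiteness and local structure of $\widehat{\mathcal{A}}[p^n]$; this rests on the $p$-divisibility of the formal group of an abelian variety together with the verification that the defining ideal lies in the squared maximal ideal of $\mathcal{O}[[Z]]$.
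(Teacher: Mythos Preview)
Your argument takes a different route from the paper's: rather than proving discreteness directly, you aim for the stronger assertion $A_1(\mathbb{F})_\tor=\{O\}$, deducing it from the infinitesimal nature of the kernel of $[p^n]$ on the formal group. The paper instead shows that $[p]$ is a strict contraction near the origin (using only $|p|<1$) and concludes that a sufficiently small ball contains no nontrivial $p$-power torsion; combined with Lemma~\ref{lem:kernel_reduction} this gives discreteness. Note that the lemma as stated allows $\operatorname{char}F=0$, and the paper's proof handles both characteristics uniformly. Your argument explicitly requires $\operatorname{char}F=p$, and indeed the stronger assertion is \emph{false} in mixed characteristic: for an ordinary abelian variety with good reduction over $\mathbb{Q}_p$, the kernel of reduction contains all of $\widehat{\mathcal{A}}[p^\infty](\overline{\mathbb{Q}_p})\neq\{O\}$. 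For the paper's applications only the equal-characteristic case matters, so this is not fatal, but it should be flagged.

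There is a more serious gap in the characteristic-$p$ argument. The claim that $\widehat{\mathcal{A}}[p^n]$ is finite over $\mathcal{O}$---equivalently, that the formal group of the N\'eron model is $p$-divisible over $\mathcal{O}$---can fail when $A$ has additive reduction: the identity component of the special fibre then contains $\mathbb{G}_a$-factors, on which $[p]$ vanishes identically in characteristic $p$, so $[p]$ is not an isogeny of formal groups over $\mathcal{O}$. One might try to repair this by passing to $F$, where $[p^n]\colon\widehat{A}\to\widehat{A}$ is genuinely finite (being the completion of the isogeny $[p^n]\colon A\to A$), so that $F[[Z]]/([p^n](Z))$ is a finite local $F$-algebra with residue field $F$ and hence has only the origin as an $\mathbb{F}$-point. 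But then one must still check that the given $z\in\mathcal{M}_\mathbb{F}^g$ with $[p^n](z)=0$ actually defines such a point: the relations witnessing finiteness, e.g.\ $Z_j^N=\sum_i[p^n]_i(Z)h_{ij}(Z)$, live in $F[[Z]]$ and the $h_{ij}$ may have unbounded $F$-coefficients, so they need not converge at $z$. The paper's contraction argument sidesteps all of this. (Minor aside: the ``squared maximal ideal'' observation is not needed for locality---any quotient of the local ring $F[[Z]]$ is automatically local with residue field $F$.)
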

\begin{proof}

Since $A_1(\mathbb{F})$ is an open subgroup of the topological group $A(\mathbb{F})$, it suffices to show that $A_1(\mathbb{F})_{\mathrm{tor}}$ is discrete in $A_1(\mathbb{F})$. Since translations by elements of $A_1(\mathbb{F})$ are homeomorphisms of $A_1(\mathbb{F})$, it suffices to show that the origin of $A$ has an open neighborhood in $A_1(\mathbb{F})$ containing no non-trivial torsion points.

Let $F(Z,W) \in \mathcal{O}[[Z,W]]^g$ denote the formal group law of $\widehat{\mathcal{A}}$, so that
\[
F(Z,W) = Z + W + G(Z,W),
\]
where $G(Z,W)$ has no constant or linear terms. Recall that for $n \in \zz_{>0}$ the formal multiplication-by-$n$ map of $\widehat{\mathcal{A}}$ is given recursively by
\[
[n](Z) = \underbrace{F(Z, F(Z, \dotsc, F(Z, Z)))}_{n \text{ times}}.
\]
We show by induction that the lowest-degree term of $[n](Z)$ is $nZ$. Indeed one has $[1](Z) = Z$. Now suppose
\[
[n](Z) = nZ + R^{(n)}(Z),
\]
where $R^{(n)}(Z)$ has total degree $\ge 2$. Then,
\[[n+1](Z)=F([n](Z),Z)=[n](Z)+Z+G([n](Z),Z),\]
and $G([n](Z),Z)$ has degree $\ge 2$, so the degree-$1$ term of $[n+1](Z)$ is $(n+1)Z$, as desired. 
\par
In particular, we have
\[ [p](Z)=pZ+\text{(terms of degree $\ge 2$)} . \]
Let $\mathcal{M}$ denote the maximal ideal of $\mathbb{F}$.
As $|p|<1$, we see that multiplication-by-$p$ in the analytic group $\widehat{\mathcal{A}}(\mathcal{M})$ strictly contracts near~$0$.
We conclude that there exists a $\rho>0$ such that the open ball $B(0,\rho)$ around the origin of radius $\rho$ inside $A_1(\mathbb{F})$ satisfies
\[ [p](B(0,\rho))\subsetneq B(0,\rho) . \]
We may moreover assume that $B(0,\rho)$ contains no nontrivial $p$-torsion point of $A$, as $A[p]$ is  a finite group. For such a $\rho$, we must also have that $B(0,\rho)$ does not contain any $p$-power torsion point; for otherwise, if $P \in B(0,\rho)$ were of exact order $p^r$ with $r \ge 2$, then $[p^{r-1}](P) \in B(0,\rho)$ would be a point of exact order $p$, and we arrive at a contradiction. 

On the other hand, by Lemma~\ref{lem:kernel_reduction}, we know that $A_1(\mathbb{F})_{\mathrm{tor}}$ is a $p$-group, and hence every torsion point of $A_1(\mathbb{F})$ has $p$-power order. We conclude that $B(0,\rho)$ contains no nontrivial torsion points, which completes the proof.\end{proof}

\section{Galois theory of convergent sequences} \label{appendix:galois}

In this section we continue to work with a  field with non-trivial non-archimedean absolute value $(F,|\cdot|)$ and finite residue field. Let $\mathbb{F}$ be the completion of an algebraic closure $\overline{F}$ of $F$. The absolute value $|\cdot|$ has a unique extension to $\mathbb{F}$ that we again denote by $|\cdot|$. We consider here field extensions $F \subseteq E \subseteq E' \subseteq \overline{F}$ with $E'/E$ Galois. 

Let $A$ be an abelian variety over $F$ and let $X \subseteq A$ be a closed irreducible subvariety defined over $F$. Let $G \in \zz[T]$ be a monic polynomial of degree~$d$ with nonzero constant term, and consider the isogeny $\psi \colon A^d \to A^d$ and closed subvariety $Z \subseteq X^d$ associated to $G$ and $X$ as in \S\ref{sec:construction_inv}. For $\sigma \in \Gal(E'/E)$ and $P \in A(E')$ write \[P_\sigma = (P,\sigma(P),\ldots,\sigma^{d-1}(P)) \in A^d(E') .\]
The following lemma follows directly from the definitions.
\begin{lem} \label{lem:sigma_and_psi}
Let $\sigma\in\Gal(E'/E)$ and $P\in A(E')$ be such that $G(\sigma(P))=0$. Then, we have $\sigma(P_\sigma) = \psi(P_\sigma)$.
\end{lem}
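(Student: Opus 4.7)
The plan is to unpack the definitions on both sides of the claimed equality and match components. The key observation is to interpret the hypothesis $G(\sigma(P))=0$ as the identity
\[
\sigma^d(P) \;=\; a_0\cdot P + a_1\cdot \sigma(P) + \cdots + a_{d-1}\cdot \sigma^{d-1}(P)
\]
in $A(E')$, where the integer coefficients act through the multiplication-by-$n$ maps of $A$ and $\sigma$ acts through its natural action on $A(E')$. In other words, the polynomial $G$ is evaluated at $\sigma$ as an operator on $A(E')$.

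I would then compute both sides of $\sigma(P_\sigma)=\psi(P_\sigma)$ coordinate by coordinate. On the left, applying $\sigma$ componentwise to $P_\sigma=(P,\sigma(P),\ldots,\sigma^{d-1}(P))$ yields
\[
\sigma(P_\sigma)=\bigl(\sigma(P),\sigma^2(P),\ldots,\sigma^d(P)\bigr).
\]
On the right, the isogeny $\psi$ is defined by the companion matrix $M$ of $G$; its first $d-1$ rows implement the shift sending $(P,\sigma(P),\ldots,\sigma^{d-1}(P))$ to $(\sigma(P),\sigma^2(P),\ldots,\sigma^{d-1}(P))$, matching the first $d-1$ entries of $\sigma(P_\sigma)$. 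The last row of $M$ encodes the coefficients $(a_0,a_1,\ldots,a_{d-1})$ of $G$, so the $d$-th component of $\psi(P_\sigma)$ equals $a_0\cdot P + a_1\cdot \sigma(P) + \cdots + a_{d-1}\cdot \sigma^{d-1}(P)$.

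The hypothesis then tells us that this last coordinate is precisely $\sigma^d(P)$, which agrees with the last coordinate of $\sigma(P_\sigma)$, finishing the verification. There is no real obstacle here: the lemma is a compatibility between the Galois action shifting coordinates of $P_\sigma$ and the companion-matrix isogeny, and the hypothesis $G(\sigma(P))=0$ is exactly what forces the ``wraparound'' coordinate to come out right. The only potential subtlety is making the meaning of $G(\sigma(P))$ explicit, since $G$ has integer coefficients while $\sigma$ is a field automorphism; once this is agreed to mean evaluation of $G$ at $\sigma$ viewed as a $\zz$-linear operator on $A(E')$, everything reduces to matching entries of $d$-tuples.
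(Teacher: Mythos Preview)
Your proof is correct and is precisely the unpacking of definitions that the paper has in mind; the paper itself simply states that the lemma ``follows directly from the definitions'' without writing out the componentwise check you give. There is nothing to add.
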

Since in this section we will be interested in convergence properties, before proceeding we choose distance functions for closed subvarieties of $A$ and $A^d$ as in Section~\ref{sec:appendixA}.
\begin{lem} \label{lem:convergence_in_power} Let  $(P_n)_n$ be a sequence in $A(E')$ converging to $X$. Let $\sigma\in\Gal(E'/E)$ be such that for all $n$ we have $G(\sigma)(P_n) = 0$. Then, the closed subvariety $Z \subseteq X^d$ is non-empty, and the sequence $(P_{n,\sigma})_n$ in $A^d(E')$  converges to $Z$.
\end{lem}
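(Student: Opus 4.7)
The plan is to prove both assertions simultaneously by showing directly that $d(P_{n,\sigma}, Z) \to 0$ in the chosen distance function on $A^d$; non-emptiness of $Z$ will then follow automatically from the convention $d(\cdot, \emptyset) = 1$. The strategy is to peel off, in turn, each of the operations used to define $Z$ in \eqref{eqn:Z} (product, pullback under $\psi$, intersection, pushforward under $\psi$), invoking the corresponding items of Proposition~\ref{voloch_distance}, and using Noetherianity of $A^d$ to reduce the infinite intersections to finite ones.

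First, since $X$ is defined over $F \subseteq E$ and $\psi$ is defined over $\zz$, both $X$ and $\psi$ are fixed by every $\sigma \in \Gal(E'/E)$; in particular $\sigma$ commutes with $\psi$ on $\overline{F}$-points. Writing $X^d = \bigcap_{i=0}^{d-1} \pi_i^{-1}(X)$ with $\pi_i$ the $i$-th projection, Proposition~\ref{voloch_distance}(ii), (iv), (vi) give
\[
d(P_{n,\sigma}, X^d) \;=\; \max_{0 \le i \le d-1} d(\sigma^i(P_n), X) \;=\; d(P_n, X) \;\longrightarrow\; 0 .
\]
Iterating Lemma~\ref{lem:sigma_and_psi} yields $\psi^r(P_{n,\sigma}) = \sigma^r(P_{n,\sigma})$ for every $r \ge 0$, and since $X^d$ is defined over $E$, combining (iv) and (vi) shows
\[
d(P_{n,\sigma}, \psi^{*,r}(X^d)) \;=\; d(\psi^r(P_{n,\sigma}), X^d) \;=\; d(\sigma^r(P_{n,\sigma}), X^d) \;=\; d(P_{n,\sigma}, X^d) \;\longrightarrow\; 0 .
\]
The descending chain of partial intersections $\bigcap_{r=0}^{N} \psi^{*,r}(X^d)$ stabilizes by Noetherianity, so $Y := \bigcap_{r \ge 0} \psi^{*,r}(X^d)$ is in fact a finite intersection, and (ii) then gives $d(P_{n,\sigma}, Y) \to 0$.

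The crux step is to pass from $Y$ to its forward images $\psi^m_*(Y)$: pushforward is not naturally compatible with distance functions, and the trick is to exploit the Galois symmetry furnished by Lemma~\ref{lem:sigma_and_psi} to trade the awkward $\psi^m$ for the distance-preserving automorphism $\sigma^m$. Explicitly, $\psi^m$ is finite and hence proper, so Proposition~\ref{voloch_distance}(iv) yields
\[
d(\psi^m(P_{n,\sigma}), \psi^m_*(Y)) \;\le\; d(P_{n,\sigma}, Y) .
\]
Since $\psi^m_*(Y)$ is defined over $E$ (both $Y$ and $\psi^m$ are), property (vi) gives
\[
d(P_{n,\sigma}, \psi^m_*(Y)) \;=\; d(\sigma^m(P_{n,\sigma}), \psi^m_*(Y)) \;=\; d(\psi^m(P_{n,\sigma}), \psi^m_*(Y)) \;\le\; d(P_{n,\sigma}, Y) \;\longrightarrow\; 0 .
\]

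Finally, from $\psi(Y) \subseteq Y$ (immediate from the definition of $Y$) one sees that $Y \supseteq \psi_*(Y) \supseteq \psi^2_*(Y) \supseteq \cdots$ is a descending chain, which stabilizes by Noetherianity. Thus $Z$ is a finite intersection of the $\psi^m_*(Y)$, and applying (ii) once more produces $d(P_{n,\sigma}, Z) \to 0$. In particular $d(P_{n,\sigma}, Z) < 1$ for all sufficiently large $n$, which forces $Z$ to be non-empty, completing the proof.
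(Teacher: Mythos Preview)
Your proof is correct and follows essentially the same route as the paper's: both hinge on using Lemma~\ref{lem:sigma_and_psi} to trade $\psi$ for $\sigma$, then invoking Proposition~\ref{voloch_distance}\,(iv) and (vi) together with Noetherianity to pass through the pullbacks and pushforwards defining $Z$. The one substantive organizational difference is that the paper interposes an ultrafield argument to conclude $Y$ is non-empty before descending to $Z$, whereas you bypass ultrafields entirely and read off non-emptiness of $Z$ at the end from $d(P_{n,\sigma},Z)<1$; this makes your version slightly more elementary and self-contained. The paper also handles the final descent by noting $Z=\psi^M(Y)$ for a single $M$ and applying one pushforward, while you treat each $\psi_*^m(Y)$ separately and then intersect via (ii); these are equivalent, and your explicit use of (ii) at both intersection steps arguably makes the implicit Noetherianity in the paper's passage to $Y$ more transparent.
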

\begin{proof}  Applying Proposition~\ref{voloch_distance}\,(vi) one sees that the sequence $(P_{n,\sigma})_n$ in $A^d(E')$ converges to $X^d$. 
Write $Y=\bigcap_{r\ge0}\psi^{*,r}(X^d)$. As $X$ is defined over $F$, so is $\psi^{*,r}(X^d)$ for each $r \ge 0$, and hence by Proposition~\ref{voloch_distance}\,(iv), Lemma~\ref{lem:sigma_and_psi}, and Proposition~\ref{voloch_distance}\,(vi), we have 
\[ \begin{split} d(P_{n,\sigma},\psi^{*,r}(X^d)) & = d(\psi^r(P_{n,\sigma}),X^d) \\ & = d(\sigma^r(P_{n,\sigma}),X^d) \\
& = d(P_{n,\sigma},X^d) . \end{split}\] This implies that for each $r \ge 0$ the sequence $(P_{n,\sigma})_n$ in $A^d(E')$  converges to $\psi^{*,r}(X^d)$. 

Let $\mathfrak{U}$ be any non-principal ultrafilter on $\nn$, and let $D$ be the ultrafield over $E'$ with respect to $\mathfrak{U}$ (see \S\ref{constr:D}). By what we have just argued, the point $P_\sigma^*\in A^d(D)$ associated to $(P_{n,\sigma})_n$ lies in $Y(D)$; in particular, the closed subvariety $Y \subseteq A^d$ is non-empty. 

As $\psi$ is proper, for all $m\ge0$ the image $\psi^m(Y)$ is closed in $A^d$. The chain \[Y\supseteq\psi(Y)\supseteq \psi^2(Y)\supseteq\dots\] must therefore stabilize by noetherianity, so that the intersection $Z$ of \eqref{eqn:Z} is a non-empty closed subset of $X^d$. Let $M\ge0$ be such that $\psi^{M}(Y)=Z$. For this $M$ we have that the
$\sigma^{M}(P_{n,\sigma})=\psi^{M}(P_{n,\sigma})$ converge to $Z$, and hence so do the $P_{n,\sigma}$ by Proposition~\ref{voloch_distance}\,(vi). Thus the sequence $(P_{n,\sigma})_n$ converges to $Z$.\end{proof}

\begin{lem} \label{lem:Galois_eqn} Consider the case that $G=(T-1)^d$.  Assume that $X$ has trivial stabilizer. Then, there exists an $M \in \zz_{>0}$ such that the following holds. Let $\sigma \in \Gal(E'/E)$  and let $(P_n)_n$ be a sequence in $A(E')$ converging to $X$. Assume that for all $n$, we have $G(\sigma)(P_n) = 0$. Let $D$ be any ultrafield over $E'$. Then, either $(P_n)_n$ converges to a proper closed subset of $X$, or the point $P^* \in X(D)$ determined by $(P_n)_n$ satisfies the equation $(\sigma^M-1)(P^*)=0$.
\end{lem}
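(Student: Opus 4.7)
The key observation is that for $G=(T-1)^d$, the companion matrix equals $I+N$ with $N$ nilpotent, so $\psi-\id_{A^d}$ and all of its powers $\psi^m-\id_{A^d}$ are nilpotent endomorphisms of $A^d$. This is exactly the hypothesis of Theorem~\ref{thm:dichotomy}\,(ii), which I plan to apply to each irreducible component of the satellite variety $Z\subseteq X^d$ from Section~\ref{sec:construction_inv}.

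First, I would choose $m\in\zz_{>0}$ such that $\psi^m$ stabilizes each of the finitely many irreducible components $Z_1,\ldots,Z_k$ of $Z$ (this is possible since $\psi$ permutes the components). Let $B_i=\Stab_{A^d}(Z_i)^\circ$ denote the identity component of the stabilizer. By Theorem~\ref{thm:dichotomy}\,(ii), applied with $\varphi=\psi^m$ on each $Z_i$, there exists $N_i\in\zz_{>0}$ such that $\psi^{mN_i}$ acts as the identity on $Z_i/B_i$. I would then set $M=m\cdot\operatorname{lcm}_i(N_i)$, which depends only on $A$, $X$, and $G$, not on $\sigma$ or the sequence.

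Given $\sigma$ and $(P_n)_n$ as in the statement, Lemma~\ref{lem:convergence_in_power} yields $(P_{n,\sigma})_n\to Z$. Since by Proposition~\ref{voloch_distance}\,(iii) the distance to $Z$ equals the minimum of the distances to the $Z_i$'s, an ultrafilter pigeonhole argument over the finite index set produces one component $Z_{i_0}$ along which $(P_{n,\sigma})_n$ converges on a set in $\mathfrak{U}$; in particular $P^*_\sigma\in Z_{i_0}(D)$. By Proposition~\ref{voloch_distance}\,(iv) applied to the first projection $\pr_1\colon A^d\to A$, the sequence $(P_n)=\pr_1(P_{n,\sigma})$ converges to $\pr_1(Z_{i_0})$, which is a closed subset of $X$. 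If $\pr_1(Z_{i_0})\subsetneq X$, we are in the first alternative of the lemma.

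Otherwise $\pr_1(Z_{i_0})=X$, and the trivial stabilizer hypothesis forces $\pr_1(B_{i_0})=0$: for any $b=(b_1,\ldots,b_d)\in B_{i_0}$ one has $X+b_1=\pr_1(Z_{i_0}+b)=\pr_1(Z_{i_0})=X$, so $b_1\in\Stab_A(X)=0$. By the choice of $M$, the morphism $\psi^M$ is trivial on $Z_{i_0}/B_{i_0}$, so $\psi^M(P^*_\sigma)-P^*_\sigma\in B_{i_0}(D)$. Lemma~\ref{lem:sigma_and_psi} gives $\sigma(P_{n,\sigma})=\psi(P_{n,\sigma})$ for every $n$; passing to the ultrafield limit (using that $\sigma$ extends to an automorphism of $D$, as in Construction~\ref{constr:D}) yields $\sigma^M(P^*_\sigma)=\psi^M(P^*_\sigma)$. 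Projecting along $\pr_1$ then gives $\sigma^M(P^*)-P^*\in\pr_1(B_{i_0})=0$, i.e., $(\sigma^M-1)(P^*)=0$. The main conceptual point, as I see it, is precisely how the trivial-stabilizer hypothesis upgrades the finite-order action of $\psi^M$ on $Z_{i_0}/B_{i_0}$ supplied by Theorem~\ref{thm:dichotomy}\,(ii) to an honest identity after projecting; the main technical nuisance is the ultrafilter pigeonhole and attendant field-of-definition bookkeeping, which is what forces the statement to admit the ``proper closed subset'' alternative in the first place.
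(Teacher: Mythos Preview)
Your proof is correct and follows essentially the same route as the paper's: both reduce to Theorem~\ref{thm:dichotomy}\,(ii) via nilpotency of $\psi-\id_{A^d}$, pick a power of $\psi$ stabilizing each component of $Z$, extract a uniform $M$, and then use the trivial-stabilizer hypothesis to show $\pr_1$ kills the stabilizer of the relevant component so that the identity $\psi^M\equiv\id$ on the quotient pulls back to $\sigma^M(P^*)=P^*$. Your difference computation $\sigma^M(P^*_\sigma)-P^*_\sigma\in B_{i_0}$ followed by projection is a slightly more direct variant of the paper's argument, which instead factors $\pr_1$ through $Y/B$ (with $B$ the full stabilizer) and computes $\sigma^M(P^*)=\overline{\pr_1}(\varphi^M(\overline{P^*_\sigma}))=\overline{\pr_1}(\overline{P^*_\sigma})=P^*$.

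One small imprecision worth flagging: after your ultrafilter pigeonhole you only know $(P_{n,\sigma})_n$ converges to $Z_{i_0}$ along a set in $\mathfrak{U}$, so the assertion that the full sequence $(P_n)$ converges to $\pr_1(Z_{i_0})$ is not literally justified---only an $\mathfrak{U}$-large subsequence does. The paper has the same looseness (it writes ``by replacing $(P_n)_n$ by a subsequence''), and in the sole application (Theorem~\ref{thm:tate-voloch-app}) a subsequence converging to a proper closed subset suffices for the induction, so this does not affect correctness.
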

\begin{proof} The assumption $G=(T-1)^d$ implies that $(\psi-\id)^d=0$ as an endomorphism of $A^d$, so that $\psi-\id$ is nilpotent. Since, as noted at the end of \S\ref{sec:construction_inv}, $\psi(Z)=Z$, $\psi$ permutes the finitely many irreducible components of $Z$, and so there exists an $N\in\zz_{>0}$ such that $\varphi=\psi^{N}$ fixes each irreducible component $Y$ of $Z$. It follows that $\varphi-\id$ is also nilpotent on $A^d$, as \[\varphi-\id=\psi^{N}-\id=(\psi-\id)(\psi^{N-1}+\cdots+\id)\]
is a product of $\psi-\id$ with an endomorphism commuting with it. Since $\varphi(Y)=Y$ for each irreducible component $Y$ of $Z$, we may apply Theorem~\ref{thm:dichotomy}\,(ii) to the triple $(A^d,Y,\varphi)$. This implies the existence of an $M\in\zz_{>0}$ such that $\varphi^{M}$ induces the identity map on $Y/\Stab_{A^d}(Y)^{0}$, and hence also on its finite quotient $Y/\Stab_{A^d}(Y)$, for all irreducible components $Y$ of $Z$. We will show that any such $M$ satisfies the conclusion of the lemma.

Let $\sigma \in \Gal(E'/E)$ and let $(P_n)_n$ be a sequence in $A(E')$ converging to $X$. Assume that for all $n$ we have $G(\sigma)(P_n) = 0$.
By replacing $(P_n)_n$ by a subsequence and applying Lemma~\ref{lem:convergence_in_power}, we see that there is an irreducible component $Y$ of $Z$ such that the sequence $(P_{n,\sigma})_n$ in $A^d(E')$  converges to $Y$.  Set $B = \Stab_{A^d}(Y)$.

Note that by construction of $Y$ (cf.~the beginning of the proof of Lemma \ref{lem:convergence_in_power}), we have $Y \subseteq X^d \subseteq A^d$. Let $\pr_1 \colon A^d \to A$ be the projection onto the first coordinate. By a small abuse of notation we also write $\pr_1$ for the induced map $Y \to X$. If $\pr_1(Y)$ is a proper closed subset of $X$, then $(P_n)_n=\pr_1((P_{n,\sigma})_n)$ converges to a proper closed subset of $X$.

Suppose therefore that $\pr_1 \colon Y \to X$ is surjective, so that in order to prove the lemma with our choice of $M$, it suffices to show that $(\sigma^M-1)(P^*)=0$. Then, $\pr_1(B) \subseteq \Stab_A(X)$, and the latter is trivial by assumption. As a result the map $\pr_1 \colon Y \to X$ factors through the quotient map $Y \to Y/B$. Let $\overline{\pr_1} \colon Y/B \to X$ be the resulting map. We now compute, using Lemma~\ref{lem:sigma_and_psi} and the previously shown fact that $\varphi^M$ induces the identity map on $Y/B$ (the action of which we also denote by $\varphi^M$), that
\[ \begin{split} \sigma^M(P^*) & = \pr_1(\sigma^M(P^*_\sigma)) = \pr_1(\varphi^M(P^*_\sigma)) \\
& = \overline{\pr_1}(\overline{\varphi^M(P^*_\sigma)})  = \overline{\pr_1} ( \varphi^M(  \overline{P^*_\sigma})) \\
& = \overline{\pr_1} (  \overline{P^*_\sigma})  = \pr_1(P_\sigma^*)  = P^* . 
\end{split} \]
The lemma follows.
\end{proof}

\begin{lem} \label{lem:ultra_torsion} 
Write $\Gamma = \Gal(E'/E)$ and assume that $\Gamma$ is topologically finitely generated.
Let $(P_n)_{n}$ be a sequence of torsion points of $A(E')$. 
Let $\mathfrak{U}$ be a non-principal ultrafilter and let $D'$ be the ultrafield over $E'$ with respect to $\mathfrak{U}$. Let $P^*$ be the point of $A(D')$ determined by the sequence $(P_n)_n$. 
If $P^*$ is fixed by $\Gamma$, then the set of $n \in \nn$ such that $P_n \in A(E)$ lies in $\mathfrak{U}$. 
\end{lem}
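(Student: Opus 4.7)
The plan is to unpack the hypothesis $\sigma(P^*)=P^*$ in $A(D')$, translate it into a statement about the distances $d(\sigma(P_n),P_n)$ along the ultrafilter, and then exploit the discreteness of torsion at the origin together with translation invariance of the distance function.

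First I would fix a distance function $d$ on $A(\mathbb{F})$ coming from the N\'eron model of $A$, which is translation invariant by Remark~\ref{rem:translation_inv}. By construction of the ultrafield and of the point $P^*\in A(D')$ associated to $(P_n)_n$, the equality $\sigma(P^*)=P^*$ is equivalent to the statement that for every $n\in\nn$ the set $\{i\in\nn:d(\sigma(P_i),P_i)\le e^{-n}\}$ belongs to $\mathfrak{U}$. Since the $P_n$ are torsion and Galois preserves torsion, each $\sigma(P_n)-P_n$ is again a torsion point of $A(\mathbb{F})$. By Lemma~\ref{thm:TV_origin}, $A(\mathbb{F})_\tor$ is discrete, so there exists $\epsilon>0$ such that every torsion point $Q$ with $d(Q,O)<\epsilon$ is equal to $O$. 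Choosing $n$ large enough that $e^{-n}<\epsilon$ and using the identity $d(\sigma(P_i),P_i)=d(\sigma(P_i)-P_i,O)$, this forces $\sigma(P_i)=P_i$ for $\mathfrak{U}$-almost every $i$. Hence the set $S_\sigma=\{i\in\nn:\sigma(P_i)=P_i\}$ lies in $\mathfrak{U}$ for every $\sigma\in\Gamma$.

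To finish, I would fix topological generators $\sigma_1,\dots,\sigma_k$ of $\Gamma$, which exist by hypothesis, and consider the finite intersection $S=S_{\sigma_1}\cap\dots\cap S_{\sigma_k}$. As a finite intersection of elements of the ultrafilter, $S\in\mathfrak{U}$. For any $i\in S$, the stabilizer $\Stab_\Gamma(P_i)$ is a closed subgroup of $\Gamma$ (since the Galois action on $A(\overline{F})$ is continuous) containing each generator $\sigma_j$, and so it coincides with $\Gamma$. Therefore $P_i\in A((E')^\Gamma)=A(E)$, and the inclusion $\{i\in\nn:P_i\in A(E)\}\supseteq S\in\mathfrak{U}$ gives the desired conclusion.

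The only step requiring any genuine thought is the initial translation of the hypothesis $\sigma(P^*)=P^*$ into the ultrafilter statement $d(\sigma(P_n),P_n)\to 0$ along $\mathfrak{U}$; everything else is a direct combination of Mattuck-type discreteness with the topological finite generation of $\Gamma$, and I do not anticipate any serious obstacle.
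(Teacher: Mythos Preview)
Your proposal is correct and follows essentially the same route as the paper's proof: both translate $\sigma(P^*)=P^*$ into $d(\sigma(P_n),P_n)\to 0$ along $\mathfrak{U}$, combine translation invariance of the distance with the discreteness of torsion (Lemma~\ref{thm:TV_origin}) to force $\sigma(P_n)=P_n$ for $\mathfrak{U}$-almost all $n$, and then intersect over a finite set of topological generators. Your explicit remark that the stabilizer of $P_i$ is closed in $\Gamma$ is a nice justification for why topological (rather than abstract) generation suffices, which the paper leaves implicit.
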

\begin{proof}By Lemma~\ref{thm:TV_origin}, the torsion subgroup $A(E')_{\tor}$ is discrete in the topological group $A(E')$.
Hence there exists $\varepsilon_0>0$ such that
\[B(O,\varepsilon_0)\cap A(E')_{\tor}=\{O\},\] where $B(0,\epsilon_0)$ is the open ball of radius $\varepsilon_0$ about $O$. For $g\in\Gamma$ and $n\in\mathbb N$, the difference $g(P_n)-P_n$ is again a torsion point, and since $d$ is
translation-invariant on $A(E')$ (see Remark~\ref{rem:translation_inv}), we have
\begin{equation}\label{eqn:uniformdist}
d(P_n,R_n)=d(O,R_n-P_n)\ge\epsilon_0
\end{equation} for any $R_n\ne P_n\in A(E')_\mathrm{tors}$. On the other hand, our hypothesis on $P^*$ implies that $d(P_n,g(P_n))\to0$ along $\mathfrak{U}$, i.e., there is a subsequence $P_{i_n}$ whose indexing set belongs to $\mathfrak{U}$ along with a sequence of positive reals $\varepsilon_{i_n}\to0$ such that $d(P_{i_n},g(P_{i_n}))=\varepsilon_{i_n}$ and $\varepsilon_{i_n}<\varepsilon_0$. But taking $R_n=g(P_n)$ in \eqref{eqn:uniformdist}, this forces $P_n=g(P_n)$ for $\mathfrak{U}$-almost all $n$.

In particular, letting $g_1,\ldots,g_k$ be topological generators of $\Gamma$ and recalling that $\Sigma_1,\dots,\Sigma_\ell\in\mathfrak{U}\Longrightarrow\Sigma_1\cap\cdots\cap\Sigma_\ell\in\mathfrak{U}$, we see that $P_n = g_1(P_n) = \cdots = g_k(P_n)$ for $\mathfrak{U}$-almost all $n$. Hence $P_n\in A(E)$ for $\mathfrak{U}$-almost all $n$.\end{proof}

\section{A Tate--Voloch type result in characteristic~$p$} \label{sec:appendixB}

Let $k$ be a finite field, and let $K$ be a field of transcendence degree one over~$k$. We assume that $k$ is algebraically closed in $K$ and fix a place $v$ of $K/k$. Let $C/k$ be a smooth projective normal curve with function field $K$. We view $v$ as a closed point of $C$.

We work here with abelian varieties $A$ defined over $K$, and closed irreducible subvarieties $X \subseteq A$ similarly defined over $K$.  Our aim is to show that any such $X$ is unapproximable (see Definition~\ref{def:bad}) by torsion points of order prime to $p$ in the $v$-adic topology.

Denote by $K_v$ the completion of $K$ at $v$ and let $\cc_v$ be the completion of an algebraic closure of $K_v$.
The following lemma is clear from Proposition~\ref{voloch_distance}\,(iv).

\begin{lem} \label{lem:reduction_trivial_stab} Let $X \subseteq A$ be any closed subvariety. Let $\Sigma \subseteq A(\cc_v)$ be a subset. Let $B=\mathrm{Stab}_A(X)$ and denote by $p \colon A \to A/B$ the projection map. If $X/B$ is unapproximable by $p(\Sigma)$, then $X$ is unapproximable by $\Sigma$.
\end{lem}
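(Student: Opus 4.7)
The plan is to invoke Proposition~\ref{voloch_distance}\,(iv) applied to the projection $p \colon A \to A/B$. The key preliminary observation is that since $B = \Stab_A(X)$, the subvariety $X$ is $B$-saturated in the sense that $X = p^{-1}(X/B)$ as closed subvarieties of $A$.

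To put this in the integral-model setting required by the proposition, I would first choose models $\A$ and $\A'$ of $A$ and $A/B$ over the valuation ring of $K_v$, together with an extension $p \colon \A \to \A'$ of the projection (for instance, by passing to the N\'eron model construction or to a suitable normalization of a chosen model). Letting $\X' \subseteq \A'$ be any model of $X/B$ realized as a closed subscheme, the pullback $\X := p^*(\X') = \X' \times_{\A'} \A$ is then a model of $X$ as a closed subscheme of $\A$. By Proposition~\ref{voloch_distance}\,(iv), for every $P \in A(\cc_v)$ one has the identity
\[
d(P, \X) \;=\; d(p(P), \X'),
\]
and moreover $P \in X$ if and only if $p(P) \in X/B$ (again because $X = p^{-1}(X/B)$).

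Now assume $X/B$ is unapproximable by $p(\Sigma)$: applied to the distance function associated to $\X'$, this yields $\epsilon > 0$ such that for every $Q \in p(\Sigma)$ either $Q \in X/B$ or $d(Q, X/B) > \epsilon$. Specializing to $Q = p(P)$ for $P \in \Sigma$ and using the two identifications above, we conclude that either $P \in X$ or $d(P, \X) > \epsilon$. Since any two distance functions on $X$ are equivalent up to bounded multiplicative constants (as recorded after Proposition~\ref{voloch_distance}), the unapproximability of $X$ by $\Sigma$ holds for an arbitrary distance function on $X$, at the cost of rescaling $\epsilon$. There is no substantive obstacle: the argument is a one-step consequence of the functoriality of the distance function in Proposition~\ref{voloch_distance}\,(iv) combined with the tautological equality $X = p^{-1}(X/B)$.
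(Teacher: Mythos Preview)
Your proposal is correct and follows exactly the approach the paper intends: the paper's own proof consists solely of the sentence ``clear from Proposition~\ref{voloch_distance}\,(iv)'', and you have unpacked precisely the argument behind that citation, namely that $X = p^{-1}(X/B)$ together with the functoriality identity $d(P, p^*\mathcal{X}') = d(p(P), \mathcal{X}')$.
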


\subsection{The case of special subvarieties}

Let $\overline{K}$ be an algebraic closure of $K$, and let $\overline{k}$ be the algebraic closure of $k$ inside $\overline{K}$. We first show that special subvarieties of $A$ are unapproximable by torsion points. 
\begin{thm} \label{thm:TV_special_case} 
Assume that $X$ is a $\overline{K}/\overline{k}$-special subvariety of $A$. 
Then, $X$ is unapproximable by torsion points of $A(\mathbb{C}_v)$.
\end{thm}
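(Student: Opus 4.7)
Since $X$ is $\overline{K}/\overline{k}$-special, write $X = G + \Tr^{\overline{K}/\overline{k}}_A(X' \otimes_{\overline{k}} \overline{K}) + \tau$ with $G \subseteq A$ an abelian subvariety, $\tau \in A(\overline{K})_{\tor}$, and $X' \subseteq A^{\overline{K}/\overline{k}}$ a closed subvariety. The plan has two phases: first, reduce to the \emph{purely constant} situation $X = \Tr^{\overline{K}/\overline{k}}_A(X' \otimes \overline{K})$, and then conclude by exploiting rigidity of $\overline{k}$-rational points in the $v$-adic topology. Translation-invariance of $d$ (Remark~\ref{rem:translation_inv}) together with the fact that $\tau$ is torsion lets us assume $\tau = 0$; since $G \subseteq \Stab_A(X)$, Lemma~\ref{lem:reduction_trivial_stab} then allows us to quotient by $G$ and relabel so that $X = f(X' \otimes_{\overline{k}} \overline{K})$ with $f := \Tr^{\overline{K}/\overline{k}}_A \colon B \otimes_{\overline{k}} \overline{K} \to A$ and $B := A^{\overline{K}/\overline{k}}$.

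Let $C := f(B \otimes_{\overline{k}} \overline{K}) \subseteq A$, an abelian subvariety containing $X$. Given a sequence $(P_n) \subseteq A(\cc_v)_{\tor}$ with $P_n \to X$, the images in $(A/C)(\cc_v)$ will be torsion and converge to the origin by Proposition~\ref{voloch_distance}(iv). Lemma~\ref{thm:TV_origin} applied to $A/C$ forces these images to eventually equal the origin, so after passing to a tail we may assume $P_n \in C(\cc_v)_{\tor}$ for all $n$. The restriction $f \colon B \otimes \overline{K} \twoheadrightarrow C$ is an isogeny with finite kernel $K$, and since $\cc_v$ is algebraically closed each $P_n$ lifts to a torsion point $\tilde P_n \in B(\cc_v)_{\tor}$. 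Because $B$ is defined over the algebraically closed field $\overline{k}$, every torsion point of $B$ is $\overline{k}$-rational (the reduced part of each $B[n]$ is étale over $\overline{k}$), giving $\tilde P_n \in B(\overline{k})$.

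By Proposition~\ref{voloch_distance}(iv), $(\tilde P_n)$ converges to $f^{-1}(X) = \bigcup_{k \in K(\cc_v)} \bigl((X' \otimes \overline{K}) + k\bigr)$; by Proposition~\ref{voloch_distance}(iii) and the pigeonhole principle, some subsequence converges to one translate $(X' \otimes \overline{K}) + k_0$. Since $k_0$ is torsion, $k_0 \in B(\overline{k})$, so translation-invariance yields that $(\tilde P_n - k_0) \subseteq B(\overline{k})$ converges to $X' \otimes \overline{K}$. The proof concludes via the \emph{rigidity} observation that, as $|\cdot|$ is trivial on $\overline{k}$ (being algebraic over the finite field~$k$), for any closed subvariety $Y \subseteq B$ defined over $\overline{k}$ and any $Q \in B(\overline{k})$ one has $d(Q,Y) = 0$ if $Q \in Y$ and $d(Q,Y) = 1$ otherwise—this is immediate from Definition~\ref{def:scheme_distance} using the trivial model $B \otimes_{\overline{k}} \mathcal{O}_{\cc_v}$ and the fact that any local defining function $g$ of $Y$ takes value $g(Q) \in \overline{k}^\times$ at $Q \notin Y$, so $|g(Q)| = 1$. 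Convergence thus forces $\tilde P_n - k_0 \in X'(\overline{k})$ for $n$ large, whence $P_n = f(\tilde P_n) \in f\bigl((X' \otimes \overline{K}) + k_0\bigr) = X$. The main difficulty will be this final rigidity step, combined with the pigeonhole bookkeeping needed to handle the (possibly non-reduced) kernel of the trace isogeny in characteristic~$p$.
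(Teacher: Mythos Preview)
Your argument is correct and follows essentially the same route as the paper: reduce to the purely constant case by translating away $\tau$ and quotienting out the abelian-subvariety part, then combine discreteness of torsion (Lemma~\ref{thm:TV_origin}) applied to the quotient by the image of the trace with the rigidity of $\overline{k}$-points relative to $\overline{k}$-subvarieties (your final step is the content of the paper's Claim~1). The paper organizes the endgame as a contradiction obtained by projecting to $A/\mathfrak{t}$, whereas you lift into $B$; these are dual rearrangements of the same idea.

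Two small points deserve tightening. First, the assertion that $f\colon B\otimes\overline{K}\twoheadrightarrow C$ is an isogeny is not automatic: this is precisely \cite[Theorem~6.12]{conrad_chow} (invoked in the paper's Claim~2), and your relabeling after quotienting by $G$ implicitly uses the universal property of the trace to rewrite the new $X$ as $\Tr_{A/G}(X''\otimes\overline{K})$ for a suitable $X''$. Second, Conrad's theorem actually gives that the kernel is \emph{purely inseparable}, so $K(\cc_v)=\{0\}$: your union has a single term set-theoretically and the pigeonhole step is vacuous; what remains is only to observe that convergence to the possibly non-reduced scheme $f^{-1}(X)$ is equivalent to convergence to its reduction $X'\otimes\overline{K}$, which is immediate since distances to a closed subscheme and to its nilpotent thickening differ by a bounded power.
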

\begin{proof}  

Let $(A^{\overline{K}/\overline{k}},\Tr^{\overline{K}/\overline{k}}_A)$ be the $\overline{K}/\overline{k}$-trace of $A$.
Since $X$ is $\overline{K}/\overline{k}$-special, there exist an abelian subvariety $G\subseteq A$,
a torsion point $\tau\in A(\overline{K})$, and a closed subvariety $X'\subseteq A^{\overline{K}/\overline{k}}$ such that
\[X=G+\Tr^{\overline{K}/\overline{k}}_A(X'\otimes_{\overline{k}}\overline{K})+\tau,\]
and one has \[\Stab_A(X)=G+\Tr^{\overline{K}/\overline{k}}_A\!\bigl(\Stab_{A^{\overline{K}/\overline{k}}}(X')\bigr).\]
By Lemma~\ref{lem:stab_quotient} and Lemma~\ref{lem:reduction_trivial_stab}, we may replace $A$ by
$A/\Stab_A(X)$ and $X$ by $X/\Stab_A(X)$, thereby assuming that $G=0$ and thus that \[X=X_0+\tau\quad\textup{ where }X_0=\Tr^{\overline{K}/\overline{k}}_A(X'\otimes_{\overline{k}}\overline{K}).\] Let  $d_v(\cdot,X_0) \colon A(\cc_v) \to \rr_{\ge 0}$ be a $v$-adic distance function with respect to~$X_0$.

\noindent{\textbf{Claim~1.}} Let $P \in  \Tr^{\overline{K}/\overline{k}}_A (A^{\overline{K}/\overline{k}}(\overline{k})) $ and assume that $P \notin \Tr^{\overline{K}/\overline{k}}_A(X'(\overline{k}))$. Then, we have $d_v(P,X_0) =1$. 

\noindent{\emph{Proof of Claim~1.}} We have that $X' \times_k C$ is a model of $X' \otimes_k K$ over $C$, and $A^{\overline{K}/\overline{k}} \times_k C$ is a model of $A^{\overline{K}/\overline{k}} \otimes_k K$ over $C$. 

Let $x \in A^{\overline{K}/\overline{k}}(\overline{k})$ be any point and set $P= \Tr^{\overline{K}/\overline{k}}_A(x)$. Assume that $P \notin \Tr^{\overline{K}/\overline{k}}_A(X'(\overline{k}))$, so that $x \notin X'(\overline{k})$. Then, the closed subvariety $\{x\} \times C$ of $A^{\overline{K}/\overline{k}} \times_k C$ is disjoint from the closed subvariety $X' \times_k C$.  This implies the claim. \qed

Let $\mathfrak{t} \subseteq A $ be the image of $\Tr^{\overline{K}/\overline{k}}_A$. It is an abelian subvariety of $A$ containing $X_0$. 

\noindent{\textbf{Claim~2.}} When $P \in \mathfrak{t}(\overline{K})$ is torsion, we have $P \in \Tr^{\overline{K}/\overline{k}}_A (A^{\overline{K}/\overline{k}}(\overline{k}))$. 

\noindent{\emph{Proof of Claim~2.}}  Following \cite[Theorem~6.12]{conrad_chow}, the map $\Tr^{\overline{K}/\overline{k}}_A \colon A^{\overline{K}/\overline{k}} \otimes_k \overline{K} \to A$ is a purely inseparable isogeny onto $\mathfrak{t}$. It therefore induces an isomorphism $(A^{\overline{K}/\overline{k}} \otimes_k \overline{K})(\overline{K}) \isom \mathfrak{t}(\overline{K})$. Next, the natural inclusion $A^{\overline{K}/\overline{k}}(\overline{k}) \subseteq (A^{\overline{K}/\overline{k}} \otimes_k \overline{K})(\overline{K})$ is an isomorphism on torsion points. Combining these facts, we see that the map $A^{\overline{K}/\overline{k}}(\overline{k})_\tor \to \mathfrak{t}(\overline{K})_\tor$ induced from $\Tr^{\overline{K}/\overline{k}}_A$  is an isomorphism. \qed

From these Claims, it follows that for all torsion points $P \in (\mathfrak{t} \setminus X_0)(\overline{K})$, we have  $d_v(P,X_0) = 1$.
Now suppose for a contradiction that there exists a sequence of torsion points of $(A \setminus X)(\overline{K})$ whose distances to $X$ converge to $0$. Via translation by the torsion point $-\tau$, we obtain a sequence $(P_n)_n$ of torsion points of $(A \setminus X_0)(\overline{K})$ whose distances to $X_0$ converge to $0$. By our observation above, by passing to a subsequence we obtain
a sequence of torsion points of $(A \setminus \mathfrak{t})(\overline{K})$ whose distances to $\mathfrak{t}$ converge to~$0$. Passing to the quotient abelian variety $A/\mathfrak{t}$, we obtain a sequence of nonzero torsion points of $(A/\mathfrak{t})(\overline{K})$ whose distances to the origin converge to $0$. This contradicts the discreteness of torsion as asserted in Lemma~\ref{thm:TV_origin}.\end{proof}

\subsection{The case of unramified prime-to-$p$ torsion}

Recall that our aim is to show that any closed subvariety $X$ of any abelian variety $A$ over $K$ is unapproximable by torsion points of order prime to $p$ in the $v$-adic topology. Here $v$ is a given place of $K$. 

Given this aim, there is no loss of generality if we replace $K$ by a finite field extension. Thus by the semi-abelian reduction theorem we may assume that $K$ has semi-abelian reduction at the place $v$. We may also assume that our given finite field $k$ is the residue field of $K$ at $v$. These two assumptions will be in place from now on.  

Let $K_v^\unr$ denote the maximal unramified extension of $K_v$. 
We have a canonical isomorphism of topological groups $\Gal(K_v^\unr / K_v) \cong \Gal(\overline{k}/k)$. Let $\tau \in\Gal(K_v^\unr /K_v)$ correspond to the Frobenius automorphism in $\Gal(\overline{k}/k)$ through this isomorphism.
\begin{lem} \label{lem:weil} 
There exists a monic polynomial $G \in \zz[T]$ having no roots of unity among its complex roots and nonzero constant term such that all torsion points $P\in A(K_v^\unr)$ of order prime to $p$ satisfy $G(\tau)(P) = 0$.
\end{lem}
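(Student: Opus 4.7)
The plan is to pass from $A(K_v^\unr)$ to the special fiber of the N\'eron model of $A$ at $v$, using the semi-abelian reduction hypothesis to decouple the identity component from the component group. Let $\A$ denote the N\'eron model of $A$ over the ring of integers of $K_v^\unr$, with special fiber $\A_v$ over $\overline{k}$; by functoriality, $\tau$ acts on $\A_v(\overline{k})$. The first step is to apply Lemma~\ref{lem:kernel_reduction}: the kernel $A_1(K_v^\unr)$ of the reduction map $A(K_v^\unr)\to\A_v(\overline{k})$ has only $p$-power torsion, so the reduction map is injective on prime-to-$p$ torsion. It therefore suffices to produce $G \in \zz[T]$ with the required properties such that $G(\tau)$ annihilates the prime-to-$p$ torsion of $\A_v(\overline{k})$.

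Next, by the semi-abelian reduction assumption, the identity component $\A_v^0$ sits in an exact sequence $0 \to T_0 \to \A_v^0 \to B \to 0$ with $T_0$ a torus and $B$ an abelian variety over $k$. For every $\ell \neq p$, the induced exact sequence of $\ell$-adic Tate modules shows that the characteristic polynomial of $\tau$ on $T_\ell(\A_v^0)$ is the product of its characteristic polynomials on $T_\ell(B)$ and $T_\ell(T_0)$. The first factor is the Weil polynomial of $B/k$: an element of $\zz[T]$ independent of $\ell$, whose complex roots all have absolute value $\sqrt{q}$. For the second, the identification $T_\ell(T_0) \cong X_*(T_0) \otimes_\zz \zz_\ell(1)$ shows that $\tau$ acts as $q$ times a finite-order automorphism of the cocharacter lattice, so its characteristic polynomial is an integer polynomial independent of $\ell$ whose complex roots are of the form $q\zeta$ with $\zeta$ a root of unity. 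The product $f(T) \in \zz[T]$ is therefore independent of $\ell$, has nonzero constant term, and has no complex root that is a root of unity. By Cayley--Hamilton, $f(\tau)$ annihilates $T_\ell(\A_v^0)$ for every $\ell \neq p$, hence kills all prime-to-$p$ torsion of $\A_v^0(\overline{k})$.

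To handle the finite component group $\Phi := \A_v/\A_v^0$, I would choose $N \in \zz_{>0}$ such that $\tau^N$ is the identity on $\Phi$, and then $M \in \zz_{>0}$ such that $q^M \equiv 1 \pmod{\exp(\Phi_{p'})}$; such $M$ exists because $\gcd(q,\exp(\Phi_{p'}))=1$. Setting $g(T) := T^N - q^M$, we have $g(\tau) = 1 - q^M \equiv 0$ on $\Phi_{p'}$; the complex roots of $g$ all have absolute value $q^{M/N}>1$, so none is a root of unity, and the constant term is $-q^M \neq 0$. Finally take $G(T) := f(T)\,g(T)$. For any prime-to-$p$ torsion point $\bar P \in \A_v(\overline{k})$, $g(\tau)(\bar P)$ maps to $0$ in $\Phi_{p'}$, hence lies in $\A_v^0(\overline{k})$; it remains prime-to-$p$ torsion and is therefore killed by $f(\tau)$, so $G(\tau)(\bar P)=0$. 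Lifting back via the injective reduction on prime-to-$p$ torsion yields $G(\tau)(P)=0$ for every prime-to-$p$ torsion $P \in A(K_v^\unr)$.

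The main delicate point is the structural description of Frobenius on the toric factor, in particular the $\ell$-independence and the exact location of its complex roots on the circle of radius $q$; the abelian factor is classical Weil theory, and the component-group construction is elementary.
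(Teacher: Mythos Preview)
Your proof is correct and follows the same strategy as the paper: pass to the special fiber of the N\'eron model, use that the kernel of reduction has only $p$-power torsion (Lemma~\ref{lem:kernel_reduction}) to reduce to annihilating prime-to-$p$ torsion there, and construct $G$ from the Frobenius eigenvalues on the special fiber. The paper compresses your second and third paragraphs into a single sentence: it observes that $\mathcal{N}_v$ is a finite union of semi-abelian varieties over $k$ and invokes ``the Riemann hypothesis for semi-abelian varieties'' to assert directly the existence of a monic $G$ with no roots of unity among its roots and $G(\Frob)=0$ in $\End(\mathcal{N}_v \times_k \overline{k})$, without separating out the toric part, the abelian quotient, or the component group. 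Your explicit decomposition --- Weil polynomial for $B$, the identification $T_\ell(T_0)\cong X_*(T_0)\otimes\zz_\ell(1)$ for the torus, and the $g(T)=T^N-q^M$ device for $\Phi$ --- makes the construction concrete and self-contained, at the cost of some length; the paper's version is terser but leaves the structure of $G$ (and in particular the handling of the disconnected part) to the reader.
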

\begin{proof} Let $\mathcal{N}$ be the N\'eron model of $A$ over~$C$ and let $\mathcal{N}_v$ be the fiber of $\mathcal{N}$ at~$v$.  By our assumption that $A$ has semi-abelian reduction at $v$, the fiber $\mathcal{N}_v$ is a finite union of semi-abelian varieties over the finite field~$k$. By the Riemann hypothesis for semi-abelian varieties, there exists a monic polynomial $G \in \zz[T]$ having no roots of unity among its complex roots and with nonzero constant term such that $G(\Frob)$ vanishes in $\End(\mathcal{N}_v \times_k \overline{k})$. Let $\pi \colon A(K_v^\unr) \to \mathcal{N}_v(\overline{k})$ be the reduction map, and let $P \in A(K_v^\unr)$ be a torsion point of order prime to $p$. As $G(\Frob)$ vanishes in $\End(\mathcal{N}_v \times_k \overline{k})$, we have $\pi(G(\tau)(P))=G(\Frob)(\pi(P))=0$. By Lemma~\ref{lem:kernel_reduction} and the fact that $P$ has order prime to $p$, this forces $G(\tau)(P)=0$.\end{proof}

\begin{thm} \label{thm:tate-voloch-unr} 
Each closed subvariety $X\subseteq A$ is unapproximable by torsion points of prime-to-$p$ order contained in $A(K_v^\unr)$.
\end{thm}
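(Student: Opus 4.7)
My plan is to argue by Noetherian induction on $\dim X$, after a preliminary reduction to the case that $X$ is geometrically irreducible. (One can achieve this by decomposing $X_{\overline{K}}$ into finitely many Galois-conjugate irreducible components and passing to their common field of definition, using that $K_v^\unr$ embeds into the unramified closure of the completion of any finite extension $L/K$ at any place above $v$.) Suppose for contradiction that there is a sequence $(P_n)_n$ of prime-to-$p$ torsion points in $A(K_v^\unr)$ with $P_n\notin X$ and $d(P_n,X)\to 0$. Via Lemmas~\ref{lem:stab_quotient_pre} and~\ref{lem:reduction_trivial_stab} I replace $(A,X)$ by $(A/\Stab_A(X),X/\Stab_A(X))$, thereby assuming $\Stab_A(X)=0$.

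By Lemma~\ref{lem:weil}, there is a monic polynomial $G\in\zz[T]$ of degree $d$ with nonzero constant term and no root of unity among its complex roots, such that $G(\tau)(P)=0$ for every prime-to-$p$ torsion point $P\in A(K_v^\unr)$. Let $\psi\colon A^d\to A^d$ be the isogeny associated to the companion matrix of $G$, and let $Z\subseteq X^d$ be the satellite variety of Section~\ref{sec:construction_inv}. Applying Lemma~\ref{lem:convergence_in_power} with $E=K_v$, $E'=K_v^\unr$, $\sigma=\tau$ shows that $Z$ is non-empty and that $(P_{n,\tau})_n$ in $A^d(K_v^\unr)$ converges to $Z$. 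Passing to a subsequence, I pick an irreducible component $Y$ of $Z$ to which $(P_{n,\tau})_n$ converges, together with an $m>0$ satisfying $\psi^m(Y)=Y$.

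The crux of the argument is to apply Pink--R\"ossler's Theorem~\ref{thm:psi-pre} to the triple $(A^d,Y,\psi^m)$. Since the eigenvalues of $\psi^m$ on $V_\ell(A^d)$ are $m$-th powers of roots of $G$, none is a root of unity; consequently no nonzero $\psi^m$-invariant abelian subvariety of $A^d$ is pure of weight $0$, and the weight-$0$ factor in the decomposition of $Y/B'$ (where $B'$ is the identity component of $\Stab_{A^d}(Y)$) is trivial. Hence $Y/B'=\overline{a}+h(\prod_{\alpha>0}X_\alpha)$ is a translate by $\overline{a}$ of a closed $\overline{k}$-subvariety. Imitating the proof of Theorem~\ref{thm:dichotomy}(i), the element $\psi^m(\overline{a})-\overline{a}$ lies in the finite group $\Stab_{A^d}(Y)/B'$, so some multiple $N\overline{a}$ lies in $\Ker(\psi^m-\id_{A^d/B'})$; this kernel is finite because $\psi^m-\id$ is an isogeny on $A^d$ (as $1$ is not a root of $G$), so $(\psi^m-\id)^{-1}(B')$ has dimension $\dim B'$. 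Thus $\overline{a}$ is torsion, and Lemma~\ref{lem:stab_quotient} yields that $Y$ is $\overline{K}/\overline{k}$-special.

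Finally, consider the projection $\pr_1\colon A^d\to A$. If $\pr_1(Y)\subsetneq X_{\overline{K}}$, then $\dim\pr_1(Y)<\dim X$ (since $X$ is geometrically irreducible), and letting $W\subseteq X$ be the union of the finitely many $\Gal(\overline{K}/K)$-conjugates of $\pr_1(Y)$ yields a $K$-closed subvariety of $X$ with $\dim W<\dim X$ to which the chosen subsequence still converges; the induction hypothesis applied to $W$ forces almost all $P_n$ in the subsequence to lie in $W\subseteq X$, contradicting $P_n\notin X$. Otherwise $\pr_1(Y)=X_{\overline{K}}$, so $X$ is the image of the special variety $Y$ under the homomorphism $\pr_1$ and is therefore itself $\overline{K}/\overline{k}$-special; Theorem~\ref{thm:TV_special_case} then supplies the required contradiction. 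The chief technical obstacle lies in the Pink--R\"ossler step above---verifying that the weight-$0$ factor vanishes and that $\overline{a}$ is torsion---which rests entirely on the absence of roots of unity among the complex roots of $G$.
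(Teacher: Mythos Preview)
Your argument is correct and follows the same skeleton as the paper's proof: reduce to trivial stabilizer, invoke Lemma~\ref{lem:weil} to obtain $G$, build the satellite $Z$, use Lemma~\ref{lem:convergence_in_power} to get convergence of $(P_{n,\tau})$ to an irreducible component $Y$ with $\psi^m(Y)=Y$, and then deduce that $Y$ is $\overline{K}/\overline{k}$-special from the Pink--R\"ossler decomposition and the absence of roots of unity among the roots of $G$.

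Where you diverge from the paper is in the endgame, and your route is unnecessarily circuitous. You project $Y$ down to $A$, split into the cases $\pr_1(Y)\subsetneq X_{\overline{K}}$ and $\pr_1(Y)=X_{\overline{K}}$, and invoke Noetherian induction in the first case (which in turn forces your preliminary reduction to geometrically irreducible $X$). The paper instead stays in $A^d$: once $Y$ is special, Theorem~\ref{thm:TV_special_case} applied to $Y\subseteq A^d$ and the torsion sequence $(P_{n,\tau})$ gives $P_{n,\tau}\in Y$ for large $n$; since $Y\subseteq X^d$, this yields $P_n\in X$ directly. No induction, no case split, and no need for $X$ to be geometrically irreducible. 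Also, where you re-derive the specialness of $Y$ by hand, the paper simply cites Theorem~\ref{thm:dichotomy}(i), which packages exactly your weight-$0$ vanishing and torsion-of-$\overline{a}$ computation.
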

\begin{proof} 
As argued in the proof of Theorem \ref{thm:TV_special_case}, Lemma~\ref{lem:reduction_trivial_stab} implies that by passing to the quotient by the stabilizer of $X$, we may assume that $X$ has trivial stabilizer. Let $(P_n)_n$ be a sequence of torsion points in $A(K_v^{\unr})$ of prime-to-$p$ order converging to $X$. By Lemma~\ref{lem:weil}, there is  a monic polynomial $G \in \zz[T]$ having no roots of unity amongst its complex roots and with nonzero constant term such that $G(\tau(P_n))=0$ for all $n\in\mathbb{N}$. 

Let $d$ be the degree of~$G$. For $P \in A(K_v^\unr)$ we write $P_\tau = (P,\tau(P),\ldots,\tau^{d-1}(P))$.
Let $\psi \colon A^d \to A^d$ be the isogeny associated to $G$, and let $Z \subseteq X^d$ be the associated closed subvariety of $A^d$ as in \eqref{eqn:Z}. By Lemma~\ref{lem:convergence_in_power}, we have that $Z$ is non-empty and that the sequence $(P_{n,\tau})_n$ converges to $Z$. Moreover, as noted in \S\ref{appendix:invariant}, we have $\psi(Z)=Z$. As $Z$ has only finitely many irreducible components, there is an irreducible component $Y$ of $Z$ and a subsequence $(P_{n_k,\tau})_k$ of $(P_{n,\tau})_n$ that converges to $Y$. Note that $Y$ can be defined over $\overline{K}$.

Let $\varphi= \psi^m$ be a power of $\psi$ with $\varphi(Y)=Y$.
The characteristic polynomial of $\psi $ is a power of $G$; in particular, none of its complex roots is a root of unity. The roots of the characteristic polynomial of $\varphi$ are powers of the roots of the characteristic polynomial of $\psi$, hence none of its complex roots is a root of unity. On the other hand, the characteristic polynomial of the induced isogeny from $\varphi$ on any positive dimensional $\varphi$-invariant abelian subvariety of $A$ is a factor of the characteristic polynomial of $\varphi$. Hence none of the  complex roots of the characteristic polynomial of the induced isogeny from $\varphi$ on a positive dimensional $\varphi$-invariant abelian subvariety of $A$ is a root of unity. The characteristic polynomial of a finite order automorphism of an abelian variety has only roots of unity as roots. We conclude that $A^d$ has no positive-dimensional abelian subvarieties that are pure of weight $0$ with respect to the isogeny $\varphi$. By Theorem~\ref{thm:dichotomy}\,(i), it follows that $Y$ is a $\overline{K}/\overline{k}$-special subvariety of $A^d$. Applying Theorem~\ref{thm:TV_special_case} yields $P_{n,\tau}\in Y$ and hence $P_n\in X$ for all sufficiently large $n$.\end{proof}

\subsection{The case of arbitrary prime-to-$p$ torsion}

We finally consider arbitrary prime-to-$p$ torsion. 
We start by noting that all prime-to-$p$ torsion of $A(\overline{K})$ is contained in $A(K^\sep)$ where $K^\sep \subseteq \overline{K}$ is the separable closure of $K$ in $\overline{K}$. Indeed, for any $n$ with $\gcd(n,p)=1$, the group scheme $A[n]$ is finite \'{e}tale over $K$. 

\begin{lem} \label{lem:log_monodromy} 
Let $P \in A(K^\sep)$ be a torsion point of order prime to $p$, and let $\sigma \in \Gal(K_v^\sep/K_v^\unr)$. Then, $(\sigma-1)^2(P)=0$.
\end{lem}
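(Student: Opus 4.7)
The plan is to deduce the statement from Grothendieck's local monodromy theorem for abelian varieties with semi-abelian reduction (SGA~7, Expos\'e~IX). That theorem asserts that when an abelian variety over a complete discretely valued field has semi-abelian reduction, the inertia group acts on each $\ell$-adic Tate module (for every prime $\ell$ different from the residue characteristic) via a unipotent representation of nilpotency class at most two; equivalently, $(\sigma - 1)^2$ annihilates $T_\ell(A)$ for every $\sigma$ in the inertia group. Under the standing hypothesis of this subsection that $A$ has semi-abelian reduction at $v$, this theorem is directly applicable to our setting.

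To execute, let $P \in A(K^\sep)$ be a torsion point whose order $n$ is coprime to $p$, and let $\sigma \in \Gal(K_v^\sep/K_v^\unr)$. Decompose $n = \prod_{\ell \mid n} \ell^{a_\ell}$ and correspondingly write $P = \sum_{\ell \mid n} P_\ell$ according to the canonical primary decomposition $A[n] = \bigoplus_{\ell \mid n} A[\ell^{a_\ell}]$. For each $\ell$ dividing $n$ we have $\ell \neq p$ and a surjection $T_\ell(A) \twoheadrightarrow A[\ell^{a_\ell}]$ given by reduction modulo $\ell^{a_\ell}$; hence the identity $(\sigma - 1)^2 = 0$ on $T_\ell(A)$ descends to $(\sigma - 1)^2 = 0$ on $A[\ell^{a_\ell}]$. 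Summing the primary annihilations yields $(\sigma - 1)^2(P) = \sum_\ell (\sigma - 1)^2(P_\ell) = 0$, which is the desired conclusion.

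There is essentially no substantial obstacle: the entire content of the lemma is the cited theorem together with the observation that the subsection's running hypothesis supplies its input. The restriction to prime-to-$p$ torsion is precisely what enables the clean reduction above, since Grothendieck's unipotency statement applies only to $\ell$-adic Tate modules with $\ell$ coprime to the residue characteristic; avoiding the prime $p$ sidesteps the $p$-adic Hodge-theoretic complications one would otherwise have to confront. The only thing to double-check is that no auxiliary hypothesis (beyond semi-abelian reduction) is tacitly needed, but the statement of Grothendieck's theorem as quoted is exactly the biconditional ``$(\sigma-1)^2=0$ on $T_\ell(A)$ for all inertia $\sigma$ and all $\ell\neq p$ if and only if $A$ has semi-abelian reduction,'' so the implication we need is unconditional once semi-abelian reduction is in force.
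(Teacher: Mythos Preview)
Your proposal is correct and follows essentially the same approach as the paper: the paper's proof simply recalls the standing semi-abelian reduction hypothesis and cites \cite[Proposition~IX.3.5]{SGA_7.I}, which is exactly the Grothendieck monodromy statement you invoke. Your added detail—the primary decomposition of $P$ and the Galois-equivariant surjection $T_\ell(A)\twoheadrightarrow A[\ell^{a_\ell}]$—is the standard unpacking of how the Tate-module statement yields the claim for individual torsion points, and is implicit in the paper's one-line citation.
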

\begin{proof}  Recall that we are assuming that $A$ has semi-abelian reduction at $v$. The result follows from \cite[Proposition~IX.3.5]{SGA_7.I}.\end{proof}

Let $K_v \subseteq K_v^\tame$ denote the maximal tamely ramified extension of $K_v$ in $K_v^\sep$. 

\begin{lem} \label{lem:prime_to_p_tame_ram} The prime-to-$p$ torsion of $A(\overline{K})$ is contained in $A(K_v^\tame)$.
\end{lem}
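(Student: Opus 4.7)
The plan is to show that every prime-to-$p$ torsion point $P\in A(\overline{K})$ is fixed by every element of the wild inertia group $W=\Gal(K_v^\sep/K_v^\tame)$. Since such a $P$ already lies in $A(K^\sep)\subseteq A(K_v^\sep)$, as recalled just before Lemma~\ref{lem:log_monodromy}, this will immediately give $P\in A(K_v^\tame)$ by Galois theory. (We work with a fixed embedding $\overline{K}\hookrightarrow\overline{K_v}$, as is implicit in the earlier results of this section.)

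First I would fix $\sigma\in W$. Since $W\subseteq\Gal(K_v^\sep/K_v^\unr)$, Lemma~\ref{lem:log_monodromy} applies and gives $(\sigma-1)^2(P)=0$. Setting $Q=(\sigma-1)(P)$, the identity $(\sigma-1)(Q)=0$ gives $\sigma(Q)=Q$, and a short induction on $n$ yields
\[
\sigma^n(P)=P+nQ\qquad\text{for all }n\ge 0.
\]
In particular $\sigma^n(P)=P$ if and only if $nQ=0$. Note also that $Q=\sigma(P)-P$ is a difference of two points of the same order $N$ as $P$, hence the order of $Q$ divides $N$ and is in particular prime to $p$.

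The key structural input I would then invoke is the standard fact that the wild inertia group $W$ at a non-archimedean place with residue characteristic $p$ is a pro-$p$ group. Since the $\langle\sigma\rangle$-orbit of $P$ is contained in the finite group $A[N]$, the stabilizer of $P$ in $W$ is open, so its index in $W$ is a power of $p$. Thus there exists $a\ge 0$ with $\sigma^{p^a}(P)=P$, which via the formula above translates to $p^aQ=0$. Combined with the prime-to-$p$ order of $Q$, this forces $Q=0$, i.e.\ $\sigma(P)=P$, completing the argument.

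The only substantive obstacle is recognizing the right combination of ingredients: the quadratic relation from Lemma~\ref{lem:log_monodromy}, the pro-$p$ structure of wild inertia, and the coprimality of the order of $P$ to $p$. Once these are assembled, the proof reduces to the coprime vanishing argument $p^aQ=0$ and $NQ=0\Rightarrow Q=0$; no further computation is needed.
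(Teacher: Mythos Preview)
Your proof is correct and follows essentially the same approach as the paper: both arguments combine the unipotency relation $(\sigma-1)^2(P)=0$ from Lemma~\ref{lem:log_monodromy} with the fact that wild inertia is pro-$p$ to force $(\sigma-1)(P)=0$ via a coprimality argument. The only cosmetic difference is the order in which the two constraints on $Q=(\sigma-1)(P)$ are derived---the paper first obtains $mQ=0$ from the binomial expansion and then invokes pro-$p$, whereas you first extract $p^aQ=0$ from pro-$p$ and then use that $Q\in A[N]$.
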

\begin{proof} The proof is elementary \cite[Lemma~A.1]{baker-ribet}. We include it here for the purposes of self-containment.
\par
By Lemma~\ref{lem:log_monodromy}, for every prime-to-$p$ torsion point
$P\in A(K^{\sep})$ and every $\sigma\in\Gal(K_v^{\sep}/K_v^{\tame})$ we have
$(\sigma-1)^2(P)=0$.  Set $N=\sigma-1$, so that $N^2(P)=0$ and
$\sigma(P)=(1+N)(P)$ for every prime-to-$p$ torsion point
$P\in A(K^{\sep})$ .

Let $P\in A(K^{\sep})$ be a prime-to-$p$ torsion point of order $m$, so that $(m,p)=1$. Since $mP=0$, we also have
$mN(P)=N(mP)=0$.  Using binomial expansion and $N^2(P)=0$ we obtain \[\sigma^{m}(P)=(1+N)^{m}(P)=(1+mN)(P)=P+m N(P)=P.\]

On the other hand, $\Gal(K_v^{\sep}/K_v^{\tame})$ is a pro-$p$ group.
Hence the action of $\sigma$ on the finite group $\langle P\rangle$,
whose order equals $m$ and is in particular prime to $p$, must be trivial.
Therefore $\sigma(P)=P$ for all $\sigma\in\Gal(K_v^{\sep}/K_v^{\tame})$,
and hence $P\in A(K_v^\tame)$.\end{proof}

\begin{lem} \label{lem:for_induction} Assume $X$ has trivial stabilizer.  
Let $(P_n)_{n}$ be a sequence in $A(\overline{K})$ that converges to $X$. Let $N \in \zz_{>0}$.  Then, the points $N\cdot P_n$ converge to the closed subvariety $N\cdot X$. Moreover, either a subsequence of $(P_n)_{n}$ converges to a proper closed subset of $X$, or the set of $n \in \nn$ such that $N.P_n \notin N\cdot X$ is infinite.
\end{lem}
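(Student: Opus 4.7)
The plan is to treat the two assertions separately. The first is a direct consequence of Proposition~\ref{voloch_distance}\,(iv) applied to the finite, and hence proper, morphism $[N]\colon A\to A$: with $\mathcal{Z}=N\cdot X$ the proposition gives $d(N\cdot P_n,N\cdot X)=d(P_n,[N]^{-1}(N\cdot X))$, and since $X\subseteq[N]^{-1}(N\cdot X)$ the right-hand side is bounded above by $d(P_n,X)$, which tends to zero by hypothesis.

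For the dichotomy I would argue by contraposition. Assume that $\{n:N\cdot P_n\notin N\cdot X\}$ is finite; after discarding those indices I may assume $N\cdot P_n\in N\cdot X$ for every $n$, so I can pick $x_n\in X(\overline{K})$ with $N\cdot P_n=N\cdot x_n$. Setting $\tau_n=P_n-x_n\in A[N]$ and using finiteness of $A[N]$, I pass to a subsequence on which $\tau_n$ takes a constant value $\tau$, giving $P_n=x_n+\tau$ with $x_n\in X$ along the subsequence.

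The key case is $\tau\ne 0$, in which the trivial-stabilizer hypothesis is decisive. Translation-invariance of the distance on the group scheme $A$ (Remark~\ref{rem:translation_inv}) gives $d(x_n,X-\tau)=d(P_n,X)\to 0$, while $x_n\in X$ forces $d(x_n,X)=0$; combining these via Proposition~\ref{voloch_distance}\,(ii) yields $d(x_n,X\cap(X-\tau))\to 0$. Translating by $\tau$ back, the subsequence $(P_n)$ converges to $X\cap(X+\tau)$. Irreducibility of $X$ together with $\Stab_A(X)=0$ and $\tau\ne 0$ forces $X+\tau\ne X$, so $X\cap(X+\tau)\subsetneq X$ is a proper closed subset of $X$, producing the first alternative of the dichotomy.

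The only subtlety I foresee is the degenerate case $\tau=0$, where $P_n=x_n\in X$ along the subsequence. Here the sequence already lies inside $X$, which is the benign outcome that the Tate--Voloch argument invoking this lemma is ultimately trying to produce; I expect it to be handled either by a further subsequence concentrating on a single point of $X$ (which yields the first alternative whenever $\dim X\ge 1$) or simply by being absorbed into the ``$P_n\in X$'' conclusion of the application, so it does not constitute a genuine obstacle.
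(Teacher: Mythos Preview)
Your argument is essentially the paper's: for the first claim the paper likewise invokes Proposition~\ref{voloch_distance}\,(iv), and for the dichotomy it observes that if $N\cdot P_n\in N\cdot X$ for cofinitely many $n$ then $P_n\in X+\Ker[N]$, so a pigeonhole produces a subsequence lying in $X+h$ for a fixed $h\in\Ker[N]$, and this subsequence converges to $(X+h)\cap X$, which is proper in $X$ by the trivial-stabilizer hypothesis. Your detour through the auxiliary points $x_n$ is not needed but is harmless.

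On the $\tau=0$ case you flag: your first suggested fix---extracting a further subsequence concentrating at a single algebraic point---does not work in general, since a sequence in $X(\overline K)$ need not have an analytically convergent subsequence whose limit is (or lies on) a proper Zariski-closed subset of $X$. Your second observation is the right one: in the only application (Theorem~\ref{thm:tate-voloch-app}) the points $P_n$ lie in $A\setminus X$, so any $x_n\in X$ with $N x_n=N P_n$ satisfies $x_n\ne P_n$, forcing $\tau\ne0$. The paper's own proof tacitly makes the same move (it does not discuss $h=0$), so the lemma is effectively being used under the standing hypothesis $P_n\notin X$; with that in place your argument is complete.
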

\begin{proof} The first statement follows from Proposition~\ref{voloch_distance}\,(iv). Let \[V = \{ n \in \nn : N.P_n  \notin N\cdot X\}.\] If $V$ is finite, then $N.P_n  \in N\cdot X$ for all but finitely many $n \in \nn$. For those $n$ we have $P_n \in X + \Ker[N]$, and since $\Ker[N]$ is finite, there exists an $h \in \Ker[N]$ and a subsequence of $(P_n)_{n \in \nn}$ that is contained in $X+h$. This subsequence converges to $(X+h) \cap X$. Since by assumption $X$ has trivial  stabilizer, the latter is a proper closed subset of $X$.
\end{proof}
The following is our Tate--Voloch type result in positive characteristic.
\begin{thm} \label{thm:tate-voloch-app}  
Each  closed subvariety $X\subseteq A$ is unapproximable by torsion points of $A(\mathbb{C}_v)$ of order prime to $p$.
\end{thm}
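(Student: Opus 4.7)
The strategy is to induct on $d := \dim X$, after reducing---via Lemma~\ref{lem:reduction_trivial_stab} applied to the quotient isogeny $A \to A/\Stab_A(X)$ (which preserves prime-to-$p$ torsion), and via a decomposition into irreducible components---to the case where $X$ is irreducible with trivial stabilizer in $A$. Assume for contradiction that $(P_n)_n$ is a sequence of prime-to-$p$ torsion points in $A(\cc_v)$ with $P_n \notin X$ for every $n$ and $d(P_n, X) \to 0$; by Lemma~\ref{lem:prime_to_p_tame_ram}, each $P_n$ lies in $A(K_v^\tame)$.

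Fix a topological generator $\sigma_0$ of $\Gal(K_v^\tame/K_v^\unr) \cong \prod_{\ell \neq p} \zz_\ell$. Lemma~\ref{lem:log_monodromy} yields $(\sigma_0-1)^2(P_n) = 0$ for every $n$, so Lemma~\ref{lem:Galois_eqn} with $G(T) = (T-1)^2$ produces an integer $M = M(A,X) \in \zz_{>0}$ and a dichotomy: either \emph{(A)} $(P_n)_n$ converges to a proper closed subset $Y \subsetneq X$, or \emph{(B)} $\sigma_0^M(P^*) = P^*$ in the ultrafield $D$ over $K_v^\tame$ (attached to some fixed non-principal ultrafilter $\mathfrak{U}$), where $P^* \in X(D)$ is the class of $(P_n)_n$. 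In Case~(A), the induction hypothesis applied to each irreducible component of $Y$---all of dimension strictly less than $d$---forces $P_n \in Y \subseteq X$ for $n$ large, contradicting $P_n \notin X$. Case~(A) is vacuous when $d=0$, so the base of the induction reduces entirely to Case~(B).

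Case~(B) is the heart of the argument. Lemma~\ref{lem:ultra_torsion} applied to $\Gamma := \overline{\langle \sigma_0^M \rangle}$---topologically generated by the single element $\sigma_0^M$---yields $P_n \in A(E)$ for $\mathfrak{U}$-almost every $n$, where $E := (K_v^\tame)^\Gamma$. Writing $M = p^a N$ with $(N, p) = 1$, the subgroup $\Gamma$ has index $N$ in $\Gal(K_v^\tame/K_v^\unr)$, so $E = K_v^\unr(t^{1/N})$ for any $t \in K$ with $v(t) = 1$. Setting $K' := K(t^{1/N})$---a finite separable extension of $K$, Eisenstein at $v$---the unique place $v'$ of $K'$ above $v$ satisfies $(K')^\unr_{v'} = E$. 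Since $A_{K'}$ has semi-abelian reduction at $v'$, Theorem~\ref{thm:tate-voloch-unr} applied to $(A_{K'}, X_{K'})$ at $v'$ forces $P_n \in X$ for $\mathfrak{U}$-almost all $n$---the desired contradiction. The principal obstacle is that unless $M$ is a pure power of $p$, $\sigma_0^M$ does not topologically generate $\Gal(K_v^\tame/K_v^\unr)$: the closure $\Gamma$ is a proper open subgroup of finite index $N$. Case~(B) therefore localizes the $P_n$ in the finite tame extension $E$ rather than in $K_v^\unr$ itself, and the tamely ramified finite base change $K \to K(t^{1/N})$ is essential to realize $E$ as the maximal unramified extension of a completion of a global function field, bringing the already-proven Theorem~\ref{thm:tate-voloch-unr} to bear.
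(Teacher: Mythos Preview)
Your argument is correct and takes a genuinely different route from the paper's. Both proofs share the reduction to irreducible $X$ with trivial stabilizer, the induction on $\dim X$, and the invocation of Lemma~\ref{lem:Galois_eqn} with $G=(T-1)^2$. From that point the paper exploits the uniformity of $M$ over \emph{all} $\sigma\in\Gal(K_v^\tame/K_v^\unr)$: choosing $N$ with $N(T-1)\in\bigl((T-1)^2,\,T^M-1\bigr)$, it passes via Lemma~\ref{lem:for_induction} to the sequence $N\cdot P_n\to N\cdot X$, deduces $(\sigma-1)(N\cdot P^*)=0$ for every $\sigma$, and applies Lemma~\ref{lem:ultra_torsion} with the full Galois group to place the $N\cdot P_n$ in $A(K_v^\unr)$, so that Theorem~\ref{thm:tate-voloch-unr} applies directly over $K$. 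You instead work with a single topological generator $\sigma_0$, use $\sigma_0^M(P^*)=P^*$ to place the $P_n$ themselves in the degree-$N$ tame extension $E$ of $K_v^\unr$, and then realize $E$ as $(K')_{v'}^\unr$ via the global base change $K'=K(t^{1/N})$, applying Theorem~\ref{thm:tate-voloch-unr} over $K'$. Your route bypasses Lemma~\ref{lem:for_induction} and the multiplication-by-$N$ device entirely, at the cost of checking that the standing hypotheses of \S\ref{sec:appendixB} (semi-abelian reduction, constant field equal to residue field) survive the passage to $K'$---which they do, since $K'/K$ is totally ramified at $v$. One minor point of exposition: Lemma~\ref{lem:ultra_torsion} as stated asks for $P^*$ to be fixed by all of $\Gamma$, not merely by the generator $\sigma_0^M$; the missing step is that $\sigma_0^M(P^*)=P^*$ forces $\sigma_0^M(P_n)=P_n$ for $\mathfrak{U}$-almost all $n$ by discreteness of torsion, and then the stabilizer of each $P_n$, being open and hence closed, contains $\Gamma=\overline{\langle\sigma_0^M\rangle}$.
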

\begin{proof} We may assume that $X$ is irreducible. By Lemma~\ref{lem:reduction_trivial_stab}, upon passing to the quotient by the stabilizer of $X$ we may assume $X$ has trivial stabilizer. We proceed by induction on $\dim(X)$. When $\dim(X)=0$, the statement follows from the discreteness of torsion as asserted in Lemma~\ref{thm:TV_origin}. Assume now that the statement is proven for $X$ with $\dim(X) < d$, and consider an $X$ with $\dim(X)=d$. 

Let $(P_n)_{n}$ be a sequence of torsion points of $(A \setminus X)(\overline{K})$ of order prime to $p$ converging to $X$. By Lemma~\ref{lem:prime_to_p_tame_ram}, for all $n \in \nn$ we have $P_n \in A(K_v^\tame)$. The extension $K_v^\unr \subseteq K_v^\tame$ is Galois. By Lemma~\ref{lem:log_monodromy}, for all $\sigma \in \Gal(K_v^\tame / K_v^\unr)$ and all $n \in \nn$, we have $(\sigma-1)^2(P_n)=0$. 

By Lemma~\ref{lem:Galois_eqn} applied with $G=(T-1)^2$, there exists an $M \in \zz_{>0}$ such that for all $\sigma \in \Gal(K_v^\tame/K_v^\unr)$ and all ultrafields $D$ over $K_v^\tr$ with respect to a non-principal ultrafilter, either the point $P^* \in X(D)$ associated to $(P_n)_{n}$ satisfies $(\sigma^M-1)(P^*)=0$, or $(P_n)_n$ converges to a proper closed subset of $X$. By the induction hypothesis, we may without loss of generality assume the former.

Let $N \in \zz_{>0}$ be such that $N(T-1)$ is in the ideal of $\zz[T]$ generated by the polynomials $(T-1)^2$ and $(T^M-1)$. As we have assumed without loss of generality that $(P_n)_n$ does not converge to a proper closed subset of $X$, Lemma~\ref{lem:for_induction} implies that the $N.P_n$ converge to $N.X$ and the set of $n$ such that $N.P_n\notin N. X$ is infinite. 

Let $\mathfrak{U}$ be a non-principal ultrafilter such that the set of $n$ with $N.P_n \notin N. X$ lies in $\mathfrak{U}$, and consider the point $P^*$ determined by the associated ultrafield $D$. Then, by our choice of $N$ and the above argument that for all $\sigma \in \Gal(K_v^\tame/K_v^\unr)$, \[(\sigma^M-1)(P^*)=0=(\sigma-1)^2(P^*),\] it follows that $(\sigma-1)(N.P^*)=0$ for all such $\sigma$. 

The Galois group $\Gal(K_v^\tame/K_v^\unr)$ is isomorphic to a quotient of $\widehat{\zz}$; in particular it is topologically finitely generated. By Lemma~\ref{lem:ultra_torsion} and the fact that $(\sigma-1)(N.P^*)=0$ for all $\sigma \in \Gal(K_v^\tame/K_v^\unr)$, the set of $n \in \nn$ such that $N.P_n \in A(K_v^\unr)$ lies inside $\mathfrak{U}$. We thereby obtain a sequence of torsion points of $(A \setminus N.X)(K_v^\unr)$ of order prime to $p$ converging to $N.X$. This contradicts Theorem~\ref{thm:tate-voloch-unr}. 
   \end{proof}

\part{An analogue of Ih's conjecture over function fields} \label{part_two}

The aim of this part is to give our proof of the ~\mainthmrefcolor. We need some preparatory material concerning local and global canonical heights on abelian varieties over function fields.
In \S\ref{sec:normalized} and \S\ref{sec:can_heights_points}, we recall the construction and main properties of these heights, including a review of relevant material from the recent work \cite{djs_canonical} on normalized local canonical heights. In \S\ref{sec:can_ht_subvar}, we discuss canonical heights of divisors and present a Mahler measure-type formula for them based on a recursion formula for arithmetic intersections, going back to work of Chambert-Loir and Thuillier \cite{clt}. We also review the geometric Bogomolov conjecture for divisors shown by Yamaki \cite{yamaki}.
In \S\ref{sec:log_equid}, we state and prove our ``logarithmic equidistribution'' result for Galois orbits of torsion points of order prime to $p$, using an equidistribution result due to Faber \cite{faber_equid} and Gubler \cite{gub_equid} and Theorem~\ref{thm:tate-voloch-app} as key inputs. 
In \S\ref{sec:proof_last_reduction_step}, we prove the ~\mainthmrefcolor. 

\subsection*{Notation and terminology}
Throughout Part \ref{part_two} of this paper $K$ will denote a field of transcendence degree one over an algebraically closed field $\mathds{k}$. Starting halfway in \S\ref{sec:log_equid}, the field $\mathds{k}$ will be an algebraic closure of a finite field. We will denote by $\overline{K}$ an algebraic closure of $K$. The set of places of $K/\mathds{k}$ will be denoted by $M_K$. 
We denote by $C$ the unique normal projective integral curve over $\mathds{k}$ with function field $K$. We identify $M_K$ with the set of closed points of $C$. 

If $X$ is a variety over $K$, a \emph{model} of $X$ is any finite type scheme $\mathcal{X}/C$ with generic fiber identified with $X$. If $\mathcal{X}, \mathcal{X}'$ are two models of $X$, there exist a non-empty open subset $U$ of $C$ and a $U$-isomorphism $\mathcal{X} \times_C U \cong \mathcal{X}' \times_C U$.

Fix an effective divisor $D$ on $X$ defined over $K$, a point $x \in X(K)$, a model $\mathcal{X}/C$ of $X$, and a place $v \in M_K$. We say that $x$ is \emph{integral} with respect to $D$ and $\mathcal{X}$ at $v$ if the Zariski closures of $x$ and $|D|$ over $\mathcal{X}$ are disjoint over~$v$. When $S$ is a set of places of $K$, we say that $x$ is \emph{$S$-integral} with respect to $D$ and $\mathcal{X}$ if $x$ is integral with respect to $D$ and $\mathcal{X}$ at all places outside $S$. These notions of integrality with respect to a given model $\mathcal{X}/C$ readily extend to effective divisors $D$ on $X$ defined over $\overline{K}$ and points $x \in X(\overline{K})$.

Given a place $v \in M_K$, we denote by $K_v$ the completion of $K$ at $v$. We fix an algebraic closure $\overline{K_v}$ of $K_v$ and denote its completion by $\cc_v$. For every $v\in M_K$, we choose the absolute value $|\cdot|_v$  on $K_v$ given by $|a|_v = \mathrm{e}^{-\ord_v(a)}$ for $a \in K^\times$, and extend the absolute value $|\cdot|_v$ to $\cc_v$. The collection of absolute values $\{|\cdot|_v\}_{v \in M_K}$ satisfies the product formula.

 When $X$ is an irreducible variety over~$K$ and $v \in M_K$, we will denote by $X_v^\an$ the Berkovich analytification of $X$ over $\cc_v$. The analytic space $X_v^\an$ naturally contains $X(\cc_v)$ with its non-archimedean analytic topology as a subspace.  

When $A$ is an abelian variety over $K$, the space $A_v^\an$ also contains a \emph{canonical skeleton}, denoted by $\varSigma_v$. The space $\varSigma_v$ has a natural structure of real torus and the Berkovich analytic space $A_v^\an$ deformation retracts onto $\varSigma_v$. We denote by $\mu_{H,v}$ the Haar measure of $\varSigma_v$ normalized to give $\varSigma_v$ unit volume. We refer to \cite{djs_canonical} and the references therein for further details about these notions.

We denote by $\log$ the natural logarithm. 

\section{Normalized canonical local heights} \label{sec:normalized}

The purpose of this section is to review the notion of canonical local heights on abelian varieties over function fields. We also discuss the notion of \emph{normalized} canonical local heights, following~\cite[\S7]{djs_canonical}.

Let $A$ be an abelian variety over $K$. Let $L$ be a line bundle on $A$. We will assume, for the purposes of exposition, that $L$ is \emph{effective}, though this is not essential. 

We fix a place $v \in M_K$. We have the analytification $L_v^\an$ of $L$ over the analytification $A_v^\an$ of $A$ at $v$. Let $\|\cdot\|$ be any continuous metric on $L_v^\an$ and let $s \in H^0(A,L)$ be any nonzero global section of $L$. Set $D = \divisor_L(s)$. We call the function 
\begin{equation} 
\lambda_v(x) = -\log \|s(x)\| \, , \quad x \in A_v^\an  \setminus |D_v^\an| 
\end{equation}
a \emph{local height} on $A_v^\an$ with respect to the effective divisor~$D$.

The function $\lambda_v$ is continuous and bounded below on $A_v^\an \setminus |D_v^\an|$ and diverges to $+\infty$ near $|D_v^\an|$. In fact, it defines a \emph{Green's function} with respect to~$D$ in the sense of \cite[\S2]{clt}.  In particular, let $U \subset A$ be a Zariski open set where $D$ is given by a local equation $z$. Then, we have
 \begin{equation} \label{eqn:local_eqn}
 \lambda_{v}(P) = -\log|z(P)|_v + O(1) \, , \quad P \in (U \setminus |D|)(\cc_v) \, , 
 \end{equation}
 where $O(1)$ denotes a bounded continuous function on $U(\cc_v)$.

We will assume, from now on, that the line bundle $L$ is \emph{rigidified}, i.e. it comes equipped with a trivialization along the origin. Assume that the rigidified line bundle $L$ is \emph{symmetric} (resp.\ \emph{anti-symmetric}). Then, for all nonzero $n \in \zz$, we have a unique isomorphism of rigidified line bundles $[n]^*L \isom L^{\otimes n^2}$ (resp.\ $[n]^*L \isom L^{\otimes n}$). The \emph{canonical metric} on $L_v^\an$ is the unique continuous metric on $L_v^\an$ such that, for all nonzero $n \in \zz$, the isomorphism $[n]^*L \isom L^{\otimes n^2}$ (resp.\ $[n]^*L \isom L^{\otimes n}$) is an isometry. 
The canonical metric exists by \cite[Example~3.7]{gu}. For arbitrary rigidified line bundles $L$, one obtains a canonical metric from the canonical metrics on the symmetric rigidified line bundle $L \otimes [-1]^*L$ and the anti-symmetric rigidified line bundle $L \otimes [-1]^*L^{\otimes -1}$. We denote the canonical metric on $L_v^\an$ by $\|\cdot\|_{L,v}$.

The local height function 
\begin{equation} \label{eqn:initial}
\lambda_{D,v}(x) = -\log \|s(x)\|_{L,v} \, , \quad x \in A_v^\an  \setminus |D_v^\an| 
\end{equation}
associated to the canonical metric is called the \emph{canonical local height} on $A_v^\an$ with respect to the rigidified line bundle~$L$ and the section~$s$. Although this is not visible from the notation, the function $\lambda_{D,v}$ depends (by an additive constant) on the choice of the global section~$s$ defining the divisor $D$. 

By pushforward along the inclusion the normalized Haar measure $\mu_{H,v}$ on $\varSigma_v$ determines a measure on $A_v^\an$, which we also denote by $\mu_{H,v}$. As the restriction  to $\varSigma_v$ of the local height  $\lambda_{D,v}$ from \eqref{eqn:initial} is continuous, we see that the canonical local height $\lambda_{D,v}$ is $\mu_{H,v}$-integrable. 

In particular, we may renormalize the function $\lambda_{D,v}$ from \eqref{eqn:initial} by defining
\begin{equation} \label{eqn:def_our_normalized}
\lambda'_{D,v}(x) = -\log \|s(x)\|_{L,v} +  \int_{A_v^\an} \log \|s\|_{L,v} \, \d \, \mu_{H,v} \, , \quad x \in A_v^\an \setminus |D_v^\an| .
\end{equation}
The resulting function $\lambda'_{D,v}$ is now intrinsic to the divisor~$D$. In other words, changing the global section $s$ that defines $D$ by a scalar multiple or changing the rigidification of $L$ do not change the function $\lambda'_{D,v}$. In \cite{djs_canonical},  the terminology \emph{normalized canonical local height} is proposed for the function $\lambda'_{D,v}$ from \eqref{eqn:def_our_normalized}.

\begin{example} Assume that $A$ is an elliptic curve and let $O$ denote its origin. It is shown in \cite[Proposition~8.1]{djs_canonical} that the restriction of the normalized canonical local height $\lambda'_{O,v}$ to $A(\overline{K_v})$ coincides with Tate's normalization of the canonical local height on $A(\overline{K_v})$, as constructed in \cite[Chapter~VI]{sil_advanced}. 
\end{example}

\subsection{The case of good reduction}

In \cite{djs_canonical}, an explicit formula is given for the normalized canonical local height $\lambda'_{D,v}$. Here, we review the formula, when $v$ is a place of \emph{good reduction}. Let $\N$ be the N\'eron model of $A$ over $C$. For every $x \in A(K) \setminus |D|$, we let $\mathbf{i}_v(x,D)$ denote the intersection multiplicity between the closure of $x$ and the closure of $D$ on $\N$ above the closed point $v$ of $C$.
\begin{thm} \label{thm:normalized_good} Assume that $A$ has good reduction at $v$ and let $x \in A(K) \setminus |D|$. Then, the formula
\[ \lambda'_{D,v}(x) = \mathbf{i}_v(x,D) \]
holds. 
\end{thm}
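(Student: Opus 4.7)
The plan is to verify the identity by unpacking the definition \eqref{eqn:def_our_normalized} of the normalized canonical local height and showing that at a place of good reduction each of its two summands has a clean interpretation in terms of the N\'eron model $\N$. The key point to establish is that, at good reduction, the canonical metric $\|\cdot\|_{L,v}$ coincides with the model metric coming from a canonical extension of the rigidified line bundle $L$ to $\N$.

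First I would construct this extension. Since $A$ has good reduction at $v$, the restriction of $\N$ to $\Spec(\oo_{C,v})$ is an abelian scheme, and the rigidified line bundle $L$ extends uniquely to a rigidified line bundle $\mathcal{L}_v$ on this abelian scheme. The isomorphisms $[n]^*L \isom L^{\otimes n^2}$ and $[n]^*L \isom L^{\otimes n}$ used to characterize the canonical metric extend across the special fiber (again by uniqueness of extensions of rigidified line bundles on abelian schemes), so the model metric determined by $(\N,\mathcal{L}_v)$ already satisfies the characterizing property of the canonical metric, and hence agrees with $\|\cdot\|_{L,v}$ on $A_v^{\an}$.

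With this identification in place, the first summand $-\log\|s(x)\|_{L,v}$ of \eqref{eqn:def_our_normalized} computes, for $x\in A(K)\setminus|D|$, precisely the intersection multiplicity $\mathbf{i}_v(x,D)$: writing $\bar s$ for the extension of $s$ to a rational section of $\mathcal{L}_v$, the Cartier divisor $\divisor_{\mathcal{L}_v}(\bar s)$ on $\N$ equals the Zariski closure $\bar D$ of $D$, and the model metric then gives $-\log\|s(x)\|_{L,v}$ as the order of vanishing of $\bar x^*\bar s$ at $v$, which is by definition $\mathbf{i}_v(x,D)$. The second summand of \eqref{eqn:def_our_normalized} vanishes: at good reduction the canonical skeleton $\varSigma_v$ reduces to the single Shilov point $\xi_v$ associated with the generic point of the special fiber of $\N$, so $\mu_{H,v}$ is a Dirac mass at $\xi_v$. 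Since $\bar D$ is horizontal, the rational section $\bar s$ is a unit at the generic point of the special fiber; hence $\|s(\xi_v)\|_{L,v}=1$ and the integral in \eqref{eqn:def_our_normalized} vanishes.

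The main obstacle I anticipate is justifying the identification of the canonical metric with the model metric at good reduction together with the description of $\varSigma_v$ as a single Shilov point. Both facts are standard within the Berkovich-theoretic framework developed in \cite{djs_canonical}, but the proof will need to cite the precise statements that handle rigidified (not necessarily symmetric or ample) line bundles, and to confirm that the Haar measure normalization chosen on $\varSigma_v$ is indeed the Dirac mass at $\xi_v$ in the good reduction case.
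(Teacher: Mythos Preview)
The paper does not prove this statement; it simply cites \cite[Corollary~7.3]{djs_canonical}. Your proposal is therefore not a comparison target in the usual sense, but rather an independent sketch of what the cited result says and why it holds. The sketch is essentially correct and identifies the two key facts: at good reduction the canonical metric is the model metric for the unique rigidified extension $\mathcal{L}_v$ of $L$ to the N\'eron model, and the canonical skeleton $\varSigma_v$ collapses to the single Shilov point $\xi_v$ so that $\mu_{H,v}=\delta_{\xi_v}$.

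One small point to tighten: your argument that $\divisor_{\mathcal{L}_v}(\bar s)$ equals the Zariski closure $\bar D$ is asserted first and then used to conclude that $\bar s$ is a unit at the generic point of the special fiber, which is slightly circular. A priori $\divisor_{\mathcal{L}_v}(\bar s)=\bar D+c\cdot F_v$ for some integer $c$, where $F_v$ is the special fiber. But in fact the normalization in \eqref{eqn:def_our_normalized} handles this automatically: one has $-\log\|s(x)\|_{L,v}=\mathbf{i}_v(x,D)+c$ and $\int_{A_v^{\an}}\log\|s\|_{L,v}\,\d\mu_{H,v}=\log\|s(\xi_v)\|_{L,v}=-c$, so the two contributions of $c$ cancel and $\lambda'_{D,v}(x)=\mathbf{i}_v(x,D)$ regardless of the value of $c$. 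This is precisely the purpose of the normalization, and it makes the argument cleaner than trying to show $c=0$ directly.
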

\begin{proof} See \cite[Corollary~7.3]{djs_canonical}.
\end{proof}
\begin{cor} \label{cor:normalized_good} Assume that $A$ has good reduction at $v$. Let $x \in A(K) \setminus |D|$ and assume that $x$ is 
integral with respect to $D$ and the N\'eron model~$\N$ at $v$. Then, we have $\lambda'_{D,v}(x) = 0$.
\end{cor}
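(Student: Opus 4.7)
The plan is to read this as an essentially immediate consequence of Theorem~\ref{thm:normalized_good}. That theorem identifies the normalized canonical local height at a place of good reduction with a certain intersection multiplicity on the Néron model, so the task reduces to observing that this intersection multiplicity vanishes under the integrality hypothesis.

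First, I would invoke Theorem~\ref{thm:normalized_good} for the point $x \in A(K) \setminus |D|$ to write
\[
\lambda'_{D,v}(x) = \mathbf{i}_v(x, D),
\]
where, by definition, $\mathbf{i}_v(x, D)$ is the intersection multiplicity on $\mathcal{N}$ of the Zariski closure $\overline{\{x\}} \subseteq \mathcal{N}$ of the section associated with $x$ and the Zariski closure $\overline{|D|} \subseteq \mathcal{N}$ of the support of $D$, taken along the closed fibre above $v$.

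Next, I would unwind the integrality hypothesis using the notation and terminology introduced at the start of Part~II: $x$ being integral with respect to $D$ and the Néron model $\mathcal{N}$ at $v$ means precisely that $\overline{\{x\}}$ and $\overline{|D|}$ have empty intersection in the fibre $\mathcal{N}_v$. Consequently, the local contribution to $\mathbf{i}_v(x, D)$ at $v$ vanishes. Substituting into the formula from Theorem~\ref{thm:normalized_good} yields $\lambda'_{D,v}(x) = 0$, completing the argument.

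The proof is, in effect, a definitional unpacking, so there is no genuine obstacle to overcome. The only small point that deserves a line of commentary is that the intersection multiplicity $\mathbf{i}_v(\,\cdot\,,\cdot\,)$ is defined in such a way that disjointness of the closed subschemes above $v$ implies vanishing, which is the usual convention (e.g.\ it is the length of the structure sheaf of the scheme-theoretic intersection localized at $v$, and an empty intersection contributes zero). Once this is acknowledged, the corollary follows at once from Theorem~\ref{thm:normalized_good}.
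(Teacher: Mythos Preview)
Your proof is correct and is precisely the argument the paper intends: the corollary is stated without proof immediately after Theorem~\ref{thm:normalized_good}, and your unpacking of the integrality hypothesis to conclude $\mathbf{i}_v(x,D)=0$ is exactly the intended reasoning.
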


\section{N\'eron--Tate heights}\label{sec:can_heights_points}

We continue with the notations and assumptions of the previous section.
In particular, we let $L$ be a line bundle on $A$. As is well known (see, for example, \cite[Corollary~9.2.5]{bg_heights}), in the class of Weil heights $\h_L \colon A(\overline{K}) \to \rr$ associated to $L$ there exists a unique function $\h'_L \colon A(\overline{K}) \to \rr$ that can be written as the sum of a quadratic form and a linear form. We call $\h'_L$ the \emph{N\'eron--Tate height} associated to~$L$.

\begin{prop} \label{prop:standard_height}
Let $L$ be a line bundle on $A$. We have the following properties of the N\'eron--Tate height.
\begin{itemize}
\item[(i)] For all $\sigma \in \Aut(\overline{K} / K)$ and all $x \in A(\overline{K})$ we have $\h'_L(\sigma(x)) = \h'_L(x)$.
\item[(ii)] Let $x \in A(\overline{K})$ be a torsion point. We have $\h'_L(x)=0$.
\end{itemize}
\end{prop}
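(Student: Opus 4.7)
The plan is to exploit the uniqueness characterization of $\h'_L$ inside the class of Weil heights attached to $L$, together with the quadratic$+$linear decomposition.

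For part~(i), first note that the Weil height $\h_L$ itself is invariant under $\Aut(\overline{K}/K)$, up to a bounded error: indeed one can build $\h_L$ from an auxiliary model of $(A,L)$ over $C$ together with a choice of defining data, and replacing $x$ by $\sigma(x)$ for $\sigma\in\Aut(\overline{K}/K)$ preserves this data because $A$ and $L$ are defined over $K$. Hence $x\mapsto\h_L(\sigma(x))$ differs from $\h_L$ by a bounded function on $A(\overline{K})$. Composing with $\sigma$ respects the group law on $A$, so the map $x\mapsto\h'_L(\sigma(x))$ is again the sum of a quadratic form and a linear form, and is at bounded distance from $\h_L$. By the uniqueness statement (\cite[Corollary~9.2.5]{bg_heights}) quoted before the proposition, this function equals $\h'_L$. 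Alternatively, Tate's limit $\h'_L(x) = \lim_{n\to\infty}n^{-2}\h_L([n]x)$ in the symmetric case (and the analogous limit in the antisymmetric case) makes Galois invariance immediate, as $[n]\circ\sigma=\sigma\circ[n]$ and $\h_L$ is Galois invariant up to $O(1)$.

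For part~(ii), write $\h'_L = q + \ell$ with $q\colon A(\overline{K})\to\RR$ a quadratic form and $\ell\colon A(\overline{K})\to\RR$ a linear form. (One can also reduce directly to the symmetric case via $L\mapsto L\otimes[-1]^*L$ and the antisymmetric case via $L\mapsto L\otimes[-1]^*L^{\otimes-1}$ as in \S\ref{sec:normalized}, whence $\h'_L$ splits into a purely quadratic and a purely linear piece that one treats separately.) Let $x\in A(\overline{K})$ be torsion of order $N$. Then for every $n\in\zz$ one has
\[
\h'_L([n]x) \;=\; n^{2}\,q(x) \;+\; n\,\ell(x).
\]
On the other hand $\{[n]x : n\in\zz\}$ is a finite subset of $A(\overline{K})$, so $\h'_L([n]x)$ takes only finitely many values as $n$ varies over $\zz$. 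A polynomial in $n$ that is bounded on $\zz$ is constant, forcing $q(x)=0$ and $\ell(x)=0$, and hence $\h'_L(x)=q(x)+\ell(x)=0$.

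Neither step is really an obstacle; the only point that requires a little care is to make sure that the Galois invariance of $\h_L$ up to bounded error (needed in~(i)) follows from the hypothesis that $A$ and $L$ are defined over $K$, which is why the statement assumes $\sigma\in\Aut(\overline{K}/K)$ rather than over a smaller field of definition.
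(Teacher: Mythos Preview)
Your argument is correct and follows essentially the same approach as the paper: part~(i) is dispatched via Galois invariance of Weil heights together with the uniqueness characterization (the paper simply says this is ``clear from the definition''), and part~(ii) uses the decomposition $\h'_L=q+\ell$ into a quadratic plus a linear form. The only cosmetic difference is that, for~(ii), the paper plugs in $n=m$ and $n=2m$ (where $m$ is the order of $x$) to obtain a $2\times 2$ linear system forcing $q(x)=\ell(x)=0$, whereas you observe that the polynomial $n\mapsto n^2q(x)+n\ell(x)$ is bounded on $\zz$; both conclusions are immediate and interchangeable.
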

\begin{proof} Property (i) is clear from the definition. For (ii), write $\h'_L= q + l$, where $q \colon A(\overline{K}) \to \rr$ is a quadratic form and $l \colon A(\overline{K}) \to \rr$ is a linear form. Let $m \in \zz_{>0}$ and $x \in A(\overline{K})$ be such that $[m](x) = 0$. Then, we obtain
\[ 0 = m^2 q(x) + m l(x) \, , \quad 0 = 4m^2 q(x) + 2m l(x).\]
We conclude $q(x) = l(x) = 0$ and, hence, $\h'_L(x)=0$.\end{proof}

Equip $L$ with a rigidification at the origin. 
As it turns out, the N\'eron--Tate height $\h'_L \colon A(\overline{K}) \to \rr$ is equal to the global height associated to the canonical metrics $\|\cdot\|_{L,v}$ on $L_v^\an$ at all places  $v \in M_K$, see for instance \cite[Corollary~9.5.14]{bg_heights}. 

More explicitly, assume that $L$ is effective, and let $s$ be a nonzero global section of $L$, with divisor~$D$. 
Then, for each $x \in A(K) \setminus |D|$, we have the local-to-global formula 
\begin{equation} \label{eq:local_global}
\h'_L(x) =    \sum_{v \in M_K} \lambda_{D,v}(x)  \, ,
\end{equation}
with $\lambda_{D,v}$ as in \eqref{eqn:initial}.
This formula can be rewritten as
\begin{equation} \label{eq:local_global_bis}
\h'_L(x) =   \sum_{v \in M_K}    \lambda'_{D,v}(x) -  \sum_{v \in M_K}  \int_{A_v^\an} \log \|s\|_{L,v} \, \d \, \mu_{H,v} 
\end{equation}
with $\lambda'_{D,v}$ as in \eqref{eqn:def_our_normalized}.
Theorem~\ref{thm:normalized_good} ensures that both sums in \eqref{eq:local_global_bis} are finite. 

We will need a generalization of \eqref{eq:local_global_bis} to the setting where $x$ is only defined over $\overline{K}$. 
For each $v \in M_K$, we fix a $K$-embedding $\iota_v : \overline{K} \hookrightarrow \overline{K_v}$. We denote by  
 $\epsilon_v : A(\overline{K}) \hookrightarrow A(\overline{K_v})$ the induced map. For $x \in A(\overline{K})$, we denote by $\Orb(x)$ the $\Aut(\overline{K}/K)$-orbit of $x$.

\begin{lem} \label{lem:Galois_orbits}
Let $K \subseteq E \subseteq \overline{K}$ be extensions of fields, and assume $E/K$ is  finite and normal. Let $x \in A(E) \setminus |D|$. Let $v \in M_K$, and let $w \in M_E$ be any place of $E$ lying above $v$. Then, for every $y \in \Orb(x)$, we have $y \in A(E) \setminus |D|$. Moreover,
\[\{ \lambda'_{D,w}(y):y \in\Orb(x) \}=\{ \lambda'_{D,v}(\epsilon_v(y)):y\in \Orb(x) \}.\]
\end{lem}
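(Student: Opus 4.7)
The plan is to treat the two assertions separately; the first is formal, and the second reduces to a Galois equivariance of normalized canonical local heights.

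For the first claim, since $E/K$ is finite and normal, every $\sigma\in\Aut(\overline{K}/K)$ restricts to an automorphism of $E$, so $\sigma(x)\in A(E)$. Since $A$ and $D$ are defined over $K$, the support $|D|$ is $\Aut(\overline{K}/K)$-stable, so $\sigma(x)\notin|D|$. Thus $\Orb(x)\subseteq A(E)\setminus|D|$.

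For the equality of sets, the plan is as follows. First single out the distinguished place $w_0$ of $E$ above $v$ for which $\iota_v|_E$ factors as $E\hookrightarrow E_{w_0}\hookrightarrow\overline{K_v}$. Since the canonical metric on $L_v^{\an}$ and the normalizing integral $\int_{A_v^{\an}}\log\|s\|_{L,v}\,\d\mu_{H,v}$ appearing in the definition of $\lambda'$ are intrinsic to $(A/K,L)$ and insensitive to finite base extension of $K_v$, one obtains the identity
\[ \lambda'_{D,w_0}(y)=\lambda'_{D,v}(\epsilon_v(y)),\qquad y\in A(E)\setminus|D|. \]
Next, since $E/K$ is finite and normal, $\Aut(E/K)$ acts transitively on the places of $E$ above $v$ (reducing to the Galois case via the maximal separable normal subextension), so $w=\tau(w_0)$ for some $\tau\in\Aut(E/K)$. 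The embedding $\iota_v\circ\tau^{-1}|_E$ of $E$ into $\overline{K_v}$ then realizes $w$, hence, applying the displayed identity to $\tau^{-1}(y)\in A(E)\setminus|D|$,
\[ \lambda'_{D,w}(y)=\lambda'_{D,v}(\epsilon_v(\tau^{-1}(y)))=\lambda'_{D,w_0}(\tau^{-1}(y)),\qquad y\in A(E)\setminus|D|. \]

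To conclude, lifting $\tau$ to some $\tilde\tau\in\Aut(\overline{K}/K)$, the map $\tilde\tau^{-1}$ permutes $\Orb(x)$, so $\{\tau^{-1}(y):y\in\Orb(x)\}=\Orb(x)$, and consequently
\[ \{\lambda'_{D,w}(y):y\in\Orb(x)\}=\{\lambda'_{D,w_0}(\tau^{-1}(y)):y\in\Orb(x)\}=\{\lambda'_{D,w_0}(y):y\in\Orb(x)\}=\{\lambda'_{D,v}(\epsilon_v(y)):y\in\Orb(x)\}, \]
which is the claimed equality. The main point I expect to require care is the base-change compatibility $\lambda'_{D,w_0}(y)=\lambda'_{D,v}(\epsilon_v(y))$: one has to verify that both the canonical metric and the correction integral in the definition of $\lambda'$ are preserved when the ground field is enlarged from $K_v$ to the finite extension $E_{w_0}$, which in turn comes down to the fact that the canonical skeleton, together with its normalized Haar measure, is unchanged under finite base change.
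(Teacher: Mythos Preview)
Your proof is correct and follows essentially the same route as the paper: both isolate the distinguished place $w_0$ induced by $\iota_v|_E$, assert the base-change compatibility $\lambda'_{D,w_0}(y)=\lambda'_{D,v}(\epsilon_v(y))$, and then use transitivity of the $\Aut(\overline{K}/K)$-action on the places of $E$ above $v$ (reducing to the Galois case via the separable part) to pass from $w_0$ to an arbitrary $w$. The only cosmetic difference is in the transition step: the paper invokes the Galois equivariance $\lambda'_{D,w}(y)=\lambda'_{D,\sigma(w)}(\sigma(y))$ directly, whereas you obtain the same identity $\lambda'_{D,w}(y)=\lambda'_{D,w_0}(\tau^{-1}(y))$ by applying the base-change compatibility a second time with the twisted embedding $\iota_v\circ\tau^{-1}$.
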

\begin{proof}
The first assertion follows from the assumption that $E/K$ is normal and the assumption that $D$ is defined over~$K$. Let $w_0 \in M_E$ be the place corresponding to the restriction of the chosen $K$-embedding $\iota_v : \overline{K} \hookrightarrow \overline{K_v}$ to $E$. Then, for every $y \in A(E) \setminus |D|$, we have
\[\lambda'_{D,w_0}(y) = \lambda'_{D,v}(\epsilon_v(y)).\] 
In particular,
\[\{\lambda'_{D,w_0}(y):y\in \Orb(x)\}=\{\lambda'_{D,v}(\epsilon_v(y)) : y \in \Orb(x)\}.\] 
For any $\sigma \in \Aut(\overline{K}/K)$ and any $w \in M_E$ with $w \mid v$ we have
\[\begin{aligned}
\{ \lambda'_{D,w}(y) : y \in \Orb(x)\}
&=\{ \lambda'_{D,w}(\tau(x)) : \tau \in \Aut(\overline{K}/K)\} \\
&=\{ \lambda'_{D,\sigma(w)}(\sigma\tau(x)) : \tau \in \Aut(\overline{K}/K)\} \\
&=\{ \lambda'_{D,\sigma(w)}(y):y\in \Orb(x)\}.
\end{aligned}\]
The proof is finished by noting that $\Aut(\overline{K}/K)$ acts transitively on the set of places of $E$ that extend~$v$. This is well known in the case that $E/K$ is Galois (see, for example, \cite[Corollary XII.4.10]{lang_algebra}). The general case follows from this by considering the separable closure of $K$ inside $E$ and noting that absolute values have unique extensions through purely inseparable field extensions. 
\end{proof}

\begin{prop} \label{prop:local_to_global}
Let $x \in A(\overline{K}) \setminus |D|$.  The formula
\[
\h'_L(x) \;=\;   \sum_{v \in M_K}   \frac{1}{\# \Orb(x)} \sum_{y \in \Orb(x)} \lambda'_{D,v}(\epsilon_v(y)) \;-\;  \sum_{v \in M_K}  \int_{A_v^\an} \log \|s\|_{L,v} \, \mathrm{d}\mu_{H,v}
\]
holds. Both sums over $M_K$ are finite sums.
\end{prop}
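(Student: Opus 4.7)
The plan is to reduce to equation \eqref{eq:local_global_bis} by base changing to a finite Galois extension $E/K$ containing $x$. By Lemma~\ref{lem:Galois_orbits}, $\Orb(x) \subseteq A(E) \setminus |D|$. First I would extend the local-to-global formula \eqref{eq:local_global_bis} to $E$-rational points: for $y \in A(E) \setminus |D|$,
\[
\h'_L(y) \;=\; \frac{1}{[E:K]} \sum_{w \in M_E} \lambda'_{D,w}(y) \;-\; \frac{1}{[E:K]} \sum_{w \in M_E} \int_{A_w^\an} \log\|s\|_{L,w}\, \d \mu_{H,w},
\]
the factor $1/[E:K]$ reflecting that the N\'eron--Tate height is intrinsic to $A$ and does not depend on the base field.

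By Proposition~\ref{prop:standard_height}\,(i), $\h'_L(y) = \h'_L(x)$ for every $y \in \Orb(x)$, so averaging the displayed identity over $\Orb(x)$ gives $\h'_L(x)$ on the left-hand side and, on the right, a sum whose integral terms are independent of $y$. It then remains to rewrite both sums over $M_E$ as sums over $M_K$. For the local-height sum, I would group the places $w \in M_E$ according to the place $v \in M_K$ they lie over; Lemma~\ref{lem:Galois_orbits} gives, for every $w \mid v$, the multiset identity $\sum_{y \in \Orb(x)} \lambda'_{D,w}(y) = \sum_{y \in \Orb(x)} \lambda'_{D,v}(\epsilon_v(y))$. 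Combining this with the relation $\sum_{w \mid v}[E_w:K_v] = [E:K]$ (which holds for $E/K$ Galois, using that $\mathds{k}$ is algebraically closed, so residue degrees are trivial) produces precisely the weight $1/\#\Orb(x)$ on the $M_K$ sum. A parallel rearrangement of the integral sum, using the compatibility of canonical metrics and canonical skeleta with the base change corresponding to $E/K$, recovers the $M_K$ integral sum on the right.

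Finiteness of both sums over $M_K$ is then inherited from the finiteness of the corresponding $M_E$ sums, which in turn follows from Corollary~\ref{cor:normalized_good}: at a place of good reduction at which every conjugate of $x$ is integral with respect to $D$, both the local height contribution and the integral term vanish, and all but finitely many places satisfy these conditions. The main technical obstacle I anticipate is the careful bookkeeping of ramification indices and decomposition groups in the rearrangement step, together with justifying the extension of \eqref{eq:local_global_bis} to $E$-rational points via the standard compatibility of the Weil height machine and of the canonical metric with finite base change.
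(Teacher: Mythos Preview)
Your approach is the paper's: pass to a finite extension $E/K$ containing $x$, average the local-to-global formula over $\Orb(x)$, regroup places of $E$ by their restriction to $K$, invoke Lemma~\ref{lem:Galois_orbits} to make the inner sum over $\Orb(x)$ independent of the choice of $w\mid v$, and then use $\sum_{w\mid v}[E_w:K_v]=[E:K]$ to collapse to a sum over $M_K$.

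Two bookkeeping slips to fix. First, with $\lambda'_{D,w}$ defined via the unique extension of $|\cdot|_v$ to $E_w$ (the normalization under which Lemma~\ref{lem:Galois_orbits} gives the stated multiset identity), the height formula over $E$ carries local-degree weights:
\[
\h'_L(y)=\sum_{w\in M_E}\frac{[E_w:K_v]}{[E:K]}\,\lambda_{D,w}(y),
\]
as in the paper's equation~\eqref{eqn:height_extension}. Your displayed formula with the uniform factor $1/[E:K]$ drops these weights, and without them the relation $\sum_{w\mid v}[E_w:K_v]=[E:K]$ does not produce the desired cancellation. Second, that degree identity holds for any finite extension of global function fields and needs neither the Galois hypothesis nor triviality of residue degrees. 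Relatedly, in characteristic~$p$ you should take $E/K$ finite \emph{normal} rather than Galois (this is how the paper and Lemma~\ref{lem:Galois_orbits} are set up), since $K(x)/K$ need not be separable for arbitrary $x\in A(\overline{K})\setminus|D|$.
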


\begin{proof} The fact that both sums over $M_K$ are finite follows from Theorem~\ref{thm:normalized_good}.
Let $K \subseteq E \subseteq \overline{K}$ be field extensions with $E/K$ finite normal and suppose that $x \in A(E) \setminus |D|$. By Proposition~\ref{prop:standard_height}\,(i), for each $y \in \Orb(x)$, we have $\h'_L(y)= \h'_L(x)$. From \eqref{eq:local_global}, for each $y \in \Orb(x)$, we have
\begin{equation} \label{eqn:height_extension}
\h'_L(y) =  \sum_{w \in M_E} \frac{[ E_w : K_v] }{[E:K]} \lambda_{D,w}(y) \, ,
\end{equation}
where for each $w \in M_E$ we denote by $v$ the prime of $K$ under $w$. We denote
\[
c =  \sum_{v \in M_K}  \int_{A_v^\an} \log \|s\|_{L,v} \, \mathrm{d}\mu_{H,v} .
\]
We compute:
\[
\begin{split}
[E : K] \, \h'_L(x)
&= \frac{1}{\# \Orb(x)} \sum_{y \in \Orb(x)} [E:K] \, \h'_L(y) \\
&=  \frac{1}{\# \Orb(x)} \sum_{y \in \Orb(x)}  \sum_{w \in M_E}  [E_w:K_v] \,  \lambda'_{D,w}(y) - [E:K] \, c \\
&=  \frac{1}{\# \Orb(x)} \sum_{y \in \Orb(x)} \sum_{v \in M_K} \sum_{w |v }  [E_w:K_v] \, \lambda'_{D,w}(y) - [E:K] \, c \\ 
&= \sum_{v \in M_K} \sum_{w |v }  [E_w:K_v]  \frac{1}{\# \Orb(x)} \sum_{y \in \Orb(x)}  \lambda'_{D,w}(y) - [E:K] \, c .
\end{split}
\]

The various changes in the order of summation are justified by the fact that all sums are, in fact, finite sums. Using Lemma~\ref{lem:Galois_orbits}, we obtain
\[
[E : K] \, \h'_L(x)
= \sum_{v \in M_K} \sum_{w |v }  [E_w:K_v]  \frac{1}{\# \Orb(x)} \sum_{y \in \Orb(x)}   \lambda'_{D,v}(\epsilon_v(y)) - [E:K] \, c .
\]
Following \cite[\S1.3.12]{bg_heights}, we have
\[ [E:K]=\sum_{w | v} [E_w:K_v ] . \]
The required formula follows.
\end{proof}

\section{The canonical height of a divisor}\label{sec:can_ht_subvar}

We continue to work with an abelian variety $A$ defined over a field $K$, which is of transcendence degree one over an algebraically closed field $\mathds{k}$. Let $L$ be an \emph{ample} line bundle on $A$ and let $X\subseteq A$ be a closed irreducible subvariety of $A$ defined over $K$. One has the associated \emph{canonical height} $\h'_L(X) \in \rr$ of $X$ with respect to~$L$. It can be obtained as a (suitably normalized) arithmetic intersection product \cites{clt, ph, gu-hohen, bost_duke, bgs, zhsmall}. 

More precisely, equip $L$ with a rigidification at the origin and, for each $v \in M_K$ let $\|\cdot\|_{L,v}$, denote the canonical metric on $L_v^\an$ at $v$. We write  $\overline{L}=(L, \{ \|\cdot \|_{L,v} \}_{v \in M_K} )$ for the associated \emph{adelic line bundle} on $A$, in the sense of \cites{zhsmall}.
Writing $d=\dim(X)$, the canonical height of $X$ with respect to $L$ is given by the formula
\[ \h'_L(X)=\frac{1}{(d+1)\deg_L(X)} \langle \overline{L} \cdots \overline{L} | X \rangle,  \]
where the expression $\langle \overline{L} \cdots \overline{L} | X \rangle$ denotes the $(d+1)$-fold arithmetic self-intersection product of $\overline{L}$ over $X$.  For $d=0$, we re-obtain the N\'eron--Tate heights of points discussed in \S\ref{sec:can_heights_points}. There is a straightforward extension of $\h'_L$ to subvarieties of $A_{\overline{K}}$. 

\subsection{Small points and vanishing of the height} 

Assume in this section that $L$ is both \emph{ample} and \emph{symmetric}. Under these hypotheses we have $\h'_L(X) \ge 0$ (see, for example, \cite[Proposition~3.4.1]{moriwaki_arithmetic}). 

\begin{definition}
We say that a closed subvariety $X$ of $A_{\overline{K}}$  \emph{has Zariski dense small points} with respect to $L$ if, for every $\eps >0$, the set of points $x \in X(\overline{K})$ with $\h'_L(x) \leq \eps$ is Zariski dense in $X$. 
\end{definition}

By \cite[Corollary~4.4]{gubler_bog} we have the following.
\begin{thm} \label{thm:dense_small_height_vanish} Assume that $L$ is ample and symmetric. Let $X$ be an irreducible closed subvariety of $A_{\overline{K}}$. Then $X$ has Zariski dense small points with respect to $L$ if and only if $\h'_L(X)=0$.
\end{thm}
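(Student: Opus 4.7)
The natural framework for this equivalence is Zhang's theorem of successive minima, transported to the function field setting. For an irreducible closed subvariety $Y \subseteq A_{\overline{K}}$, define the essential minimum
\[
e_1(Y) \;=\; \sup_{Z \subsetneq Y} \inf_{x \in (Y \setminus Z)(\overline{K})} \h'_L(x),
\]
where $Z$ ranges over proper closed subvarieties. The first observation is that $X$ has Zariski dense small points with respect to $L$ if and only if $e_1(X)=0$: if small points are dense, then for any proper $Z \subsetneq X$ and any $\varepsilon>0$ there is a point of height $\le \varepsilon$ outside $Z$; conversely, if $e_1(X)=0$ then no proper closed subset can contain all sufficiently small points.

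The core of the proof is then Zhang's inequality of successive minima. Letting $d=\dim Y$ and defining $e_i(Y)$ for $i=1,\dots,d+1$ as the obvious iterates of the essential minimum with $e_1(Y)\ge e_2(Y)\ge\cdots$, one has
\[
e_1(Y) \;\ge\; \frac{\h'_L(Y)}{\deg_L(Y)} \;\ge\; \frac{1}{d+1}\sum_{i=1}^{d+1} e_i(Y).
\]
Because $L$ is ample and symmetric, the associated N\'eron--Tate height is non-negative on $A(\overline{K})$, and consequently $e_i(Y)\ge 0$ for all $i$. Granting the inequality above, both implications of the theorem follow at once. Indeed, if $\h'_L(X)=0$, then the right-hand inequality together with non-negativity of the $e_i$ forces $e_1(X)=0$, so small points are Zariski dense; conversely, if small points are Zariski dense then $e_1(X)=0$, and the left-hand inequality combined with $\h'_L(X)\ge 0$ (which holds as $L$ is ample and symmetric) gives $\h'_L(X)=0$.

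The main obstacle is of course the proof of Zhang's inequality itself in the function field regime. Over number fields, Zhang derived it from an arithmetic Hilbert--Samuel asymptotic for $\h^0(Y,L^{\otimes n})$ combined with a Minkowski-type counting argument. To adapt this to function fields, as in Gubler's \cite{gubler_bog}, one works within the framework of semipositive adelic metrics and the associated Chambert-Loir measures on the Berkovich spaces $A_v^{\mathrm{an}}$, and one requires a volume formula expressing the top self-intersection $\langle \overline{L}^{d+1}|Y\rangle$ as a suitable limit tied to the successive minima. Once this analytic and intersection-theoretic apparatus is in place, the deduction of the two inequalities is formal and parallels Zhang's original treatment. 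Given that the paper is content to cite \cite[Corollary~4.4]{gubler_bog}, this is precisely the route one should take.
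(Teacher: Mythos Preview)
Your approach is correct and matches the paper's: the paper simply cites \cite[Corollary~4.4]{gubler_bog}, which is exactly the consequence of Zhang's inequality of successive minima (established by Gubler over function fields) that you have outlined. One minor slip: with the paper's normalization $\h'_L(X)=\langle\overline L^{\,d+1}\mid X\rangle/\bigl((d+1)\deg_L X\bigr)$, the middle term of Zhang's inequality should be $\h'_L(Y)$ rather than $\h'_L(Y)/\deg_L(Y)$, though this is immaterial here since only vanishing is at stake.
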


\subsection{A Mahler measure-type formula}
Let $X$ be a closed irreducible subvariety of $A$ of dimension $d$. For every place $v \in M_K$, we have a natural measure $c_1(L,\|\cdot\|_{L,v})^d \wedge \delta_{X_v^\an}$ on $A_v^\an$ called the \emph{Chambert-Loir measure} of $X$ with respect to the ample line bundle~$L$ and its canonical metric $\|\cdot\|_{L,v}$ at $v$, see \cite[\S2.4]{cl}. The Chambert-Loir measure features in a recursive formula for computing the height, as follows. Recall that an ample line bundle on an abelian variety is effective.
\begin{thm} \label{eqn:recursive_CLT}
Let $s \in H^0(A,L)$ be a nonzero global section and assume that $X$ is not contained in the divisor $\divisor_L(s)$. Let $Z \subseteq X$ be the restriction of the divisor $\divisor_L(s)$ to $X$. Then,
\begin{equation} 
\langle \overline{L} \cdots \overline{L} | X \rangle = \langle \overline{L} \cdots \overline{L} | Z \rangle - \sum_{v \in M_K} \int_{A_v^\an} \log \|s\|_{L,v} \, c_1(L,\|\cdot\|_{L,v})^d \wedge \delta_{X_v^\an}.
\end{equation}
\end{thm}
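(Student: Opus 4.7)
The plan is to derive the formula as an instance of the standard recursive computation of arithmetic intersection numbers for adelically metrized line bundles, in the formalism developed by Zhang \cite{zhsmall} and refined by Chambert-Loir--Thuillier \cite{clt}. The first step is to place $\overline{L}$ in that framework: since $L$ is ample, its canonical metric $\|\cdot\|_{L,v}$ at each place $v$ is semipositive, being a uniform limit of the model metrics obtained by pulling back any fixed positive smooth metric along the multiplication-by-$n$ maps and rescaling by $n^{-2}$. Thus $\overline{L}$ is a semipositive adelic metrized line bundle on $A$ in Zhang's sense, and the arithmetic self-intersection product $\langle \overline{L} \cdots \overline{L} | X \rangle$ is well-defined and varies continuously under uniform convergence of the metrics.

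The second step is to establish the formula at the level of model metrics, where it is essentially a tautology. Fix a model $\mathcal{X}/C$ of $X$ and a line bundle model $\mathcal{L}/\mathcal{X}$ of $L$, and choose the section $s$ so that its divisor extends to a relative Cartier divisor $\mathcal{D}$ on $\mathcal{X}$ meeting every irreducible component of $\mathcal{X}$ properly. Then the arithmetic projection formula gives
\[
\langle \mathcal{L}^{d+1}\cdot\mathcal{X}\rangle \;=\; \langle \mathcal{L}^{d}\cdot\mathcal{Z}\rangle \;-\; \sum_{v\in M_K}\int_{A_v^\an}\log\|s\|_{L,v}^{\mathrm{mod}}\, c_1(L,\|\cdot\|_{L,v}^{\mathrm{mod}})^d\wedge\delta_{X_v^\an},
\]
where $\mathcal{Z}=\mathcal{D}\cap\mathcal{X}$ models $Z$ and the sum on the right is finite: at all but finitely many places, the section $s$ is a unit on the model, so $\log\|s\|_{L,v}^{\mathrm{mod}}\equiv 0$. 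In this model-metric case, the Chambert-Loir measure reduces to a weighted sum of Dirac masses at the special fiber components and the integral is the sum of local intersection multiplicities with the vertical divisors, so the equality reduces to a standard relation in intersection theory on a regular arithmetic scheme of dimension $d+1$.

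The third step is to pass to the limit. Take a sequence of semipositive model metrics $\|\cdot\|_{L,v}^{(n)}$ converging uniformly to $\|\cdot\|_{L,v}$ at each $v$, agreeing with $\|\cdot\|_{L,v}$ at all but finitely many places for each fixed $n$. The left-hand side $\langle\overline{L}^{d+1}|X\rangle$ is continuous under such uniform convergence by the construction of the adelic intersection product; the first term on the right, $\langle\overline{L}^d|Z\rangle$, is continuous by the same reasoning applied to $Z$. For the integrals, the measures $c_1(L,\|\cdot\|_{L,v}^{(n)})^d\wedge\delta_{X_v^\an}$ converge weakly to $c_1(L,\|\cdot\|_{L,v})^d\wedge\delta_{X_v^\an}$, and $\log\|s\|_{L,v}^{(n)}\to\log\|s\|_{L,v}$ uniformly on $A_v^\an$. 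Since both functions differ by a continuous bounded function from a fixed Green's function with only logarithmic singularities along $|Z_v^\an|$, a standard dominated-convergence argument using that the limit measure does not charge $|Z_v^\an|$ yields convergence of the integrals.

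The main obstacle is the last point: showing that $\log\|s\|_{L,v}$ is integrable against $c_1(L,\|\cdot\|_{L,v})^d\wedge\delta_{X_v^\an}$, and that the integral is jointly continuous in the metric and the measure. This is the heart of the Chambert-Loir--Thuillier theory: the canonical Chambert-Loir measure gives zero mass to pluripolar subsets of $X_v^\an$, in particular to $|Z_v^\an|$, which ensures that the singularities of $\log\|s\|_{L,v}$ are integrable. Once this local analytic input is granted at every place, the recursion assembles globally into the stated formula.
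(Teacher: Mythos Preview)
Your outline is correct and is precisely the strategy of \cite[Th\'eor\`eme~4.1]{clt}, which is all the paper invokes: the paper's own proof is a one-line citation to that result and gives no independent argument. In other words, you have reconstructed the proof behind the citation rather than taken a different route.

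A couple of small points worth tightening. First, in your second step you write ``choose the section $s$ so that\ldots'', but $s$ is fixed by the statement; what you mean is that one may choose a model $(\mathcal{X},\mathcal{L})$ over which $s$ extends and cuts out a relative Cartier divisor, which is always possible after shrinking or modifying the model. Second, the phrase ``regular arithmetic scheme'' is slightly off in this function field setting: one works with models over the curve $C$, and regularity is not needed for the model-metric identity, only properness and the Cartier property of the divisor. Neither point affects the validity of your argument.
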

\begin{proof} See \cite[Th\'eor\`eme~4.1]{clt}.
\end{proof}
Let $g = \dim A$. Recall that on $A_v^\an$ we have a canonical measure $\mu_{H,v}$ derived from the normalized Haar measure on the canonical skeleton $\varSigma_v$ of $A_v^\an$.
\begin{thm} \label{thm:gubler_measure} The Chambert-Loir measure $c_1(L,\|\cdot \|_v)^g$ over $A$ associated to the canonical metric $\|\cdot\|_{L,v}$ on $L$ at $v$ satisfies
\[c_1(L,\|\cdot \|_{L,v})^g=\deg_L(A) \,  \mu_{H,v} . \]
\end{thm}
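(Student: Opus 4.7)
The plan is to use the defining equivariance of the canonical metric under the multiplication-by-$n$ endomorphisms of $A$ to show that $c_1(L,\|\cdot\|_{L,v})^g$ is, up to scaling by $\deg_L(A)$, a probability measure on $A_v^{\an}$ invariant under pushforward by every $[n]$. Then I would invoke the structure theory of Berkovich analytic abelian varieties to identify any such measure with the Haar measure on the canonical skeleton $\varSigma_v$.

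First I would reduce to the case that $L$ is symmetric, using that the canonical metric on a rigidified anti-symmetric line bundle has vanishing first Chern current (so the anti-symmetric factor drops out of the $g$-fold self-wedge). For symmetric $L$, the canonical metric is characterized by the fact that the isomorphism $[n]^*L \isom L^{\otimes n^2}$ is an isometry, which on first Chern currents gives $[n]^* c_1(L,\|\cdot\|_{L,v}) = n^2 \cdot c_1(L,\|\cdot\|_{L,v})$. Taking the $g$-fold self-wedge and combining with the identity $[n]_*[n]^* = \deg([n]) \cdot \mathrm{id} = n^{2g} \cdot \mathrm{id}$ yields
\[
[n]_* \, c_1(L,\|\cdot\|_{L,v})^g \;=\; c_1(L,\|\cdot\|_{L,v})^g \qquad \text{for every } n \geq 1.
\]
The total mass of $c_1(L,\|\cdot\|_{L,v})^g$ is $\deg_L(A)$, so dividing out gives a probability measure $\mu$ on $A_v^{\an}$ satisfying $[n]_* \mu = \mu$ for every positive integer $n$.

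Next I would use the analytic uniformization of $A_v^{\an}$ recalled in \cite{djs_canonical}: there is a canonical continuous retraction $\tau \colon A_v^{\an} \to \varSigma_v$ that is $[n]$-equivariant, and the induced action of $[n]$ on the real torus $\varSigma_v$ is multiplication by $n$. The key claim is that $\mu$ is supported on $\varSigma_v$; this uses the retraction together with the observation that in the fibers of $\tau$ the iterates of $[n]$ contract any mass off the skeleton toward $\varSigma_v$, which is incompatible with invariance unless $\mu$ already sits on $\varSigma_v$. Once $\mu$ is a probability measure on $\varSigma_v$ that is invariant under multiplication by every positive integer, a standard character-theoretic (Fourier) argument on the real torus forces $\mu = \mu_{H,v}$, which is exactly the claimed identity.

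The step I expect to be the main obstacle is the support statement. The pushforward invariance of $\mu$ under every $[n]$ is a purely formal consequence of the canonical nature of the metric, but deducing that the mass concentrates on the skeleton is not formal: it requires genuine non-archimedean input, namely the Raynaud uniformization of $A_v^{\an}$ and a careful analysis of how the fibers of $\tau$ are contracted by iterates of $[n]$. This is the substantive content of Gubler's theorem on canonical measures, whereas the reduction to the symmetric case and the uniqueness of a multiplication-invariant probability measure on a real torus are routine by comparison.
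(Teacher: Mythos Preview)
The paper does not prove this statement at all: its entire proof is the single sentence ``See \cite[Corollary~7.3]{gu}.'' So there is nothing to compare your argument against in the paper itself; you have instead sketched (a version of) Gubler's own proof of the cited result.

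Your sketch is broadly correct and does follow Gubler's strategy: the formal $[n]_*$-invariance from the defining property of the canonical metric, combined with the analytic uniformization and the contraction of the $[n]$-dynamics off the skeleton, is exactly how \cite{gu} proceeds. You are also right to flag the support step as the nontrivial part. One comment: your reduction ``to the symmetric case'' via the claim that the canonical metric on an anti-symmetric line bundle has vanishing first Chern current is a bit glib. What is true, and what Gubler uses, is that the Chambert-Loir measure $c_1(\overline{L})^g$ depends only on the \emph{numerical} (in fact algebraic) equivalence class of $L$ once the metric is canonical, and an anti-symmetric line bundle on an abelian variety is algebraically equivalent to zero; this is how the anti-symmetric part drops out. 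Phrasing it as ``vanishing first Chern current'' would need justification, since Chambert-Loir measures are only defined for the top wedge power and the intermediate ``currents'' require more care in the non-archimedean setting. But this is a matter of presentation rather than a genuine gap, and in any case the paper under review simply invokes \cite{gu} and moves on.
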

\begin{proof} See \cite[Corollary~7.3]{gu}.
\end{proof}
We use the above results to derive the following Mahler measure-type formula for the canonical height of an ample divisor.
\begin{thm} \label{prop:local_to_global_divisor} Let $D$ be an integral ample divisor on $A$. Write $L_0$ for the line bundle associated to $D$ and endow $L_0$ with a rigidification along the origin. Let $s \in H^0(A,L_0)$ be any nonzero global section of $L_0$ with $\divisor_{L_0}(s)=D$, and write $L$ for the symmetric ample line bundle $ L_0 \otimes [-1]^*L_0$. Then, the equality
\[ g \cdot \h'_L(D) =  \sum_{v \in M_K} \int_{A_v^\an} \log \|s\|_{L_0,v} \, \d \, \mu_{H,v} \]
holds.
\end{thm}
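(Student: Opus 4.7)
The strategy is to apply the Chambert-Loir--Thuillier recursion of Theorem~\ref{eqn:recursive_CLT}, in its mixed-multilinear form, to an arithmetic intersection on $A$ obtained by cutting with the section $s\in H^0(A,L_0)$. Since $\divisor_{L_0}(s)=D$ and $\dim A=g$, the recursion gives
\[
\langle \overline{L_0}\cdot\overline{L}\cdots\overline{L}\,|\,A\rangle = \langle\overline{L}\cdots\overline{L}\,|\,D\rangle - \sum_{v\in M_K}\int_{A_v^{\an}} \log\|s\|_{L_0,v}\; c_1(L,\|\cdot\|_{L,v})^g \wedge \delta_{A_v^{\an}},
\]
with $g$ copies of $\overline{L}$ in each arithmetic intersection bracket. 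Invoking Theorem~\ref{thm:gubler_measure} replaces $c_1(L,\|\cdot\|_{L,v})^g\wedge\delta_{A_v^{\an}}$ by $\deg_L(A)\,\mu_{H,v}$, so the sum of integrals on the right becomes $\deg_L(A)\sum_v\int_{A_v^{\an}}\log\|s\|_{L_0,v}\,\d\,\mu_{H,v}$.

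The heart of the proof is to show that $\langle\overline{L_0}\cdot\overline{L}\cdots\overline{L}\,|\,A\rangle=0$. To this end, set $M=L_0\otimes[-1]^*L_0^{-1}$, which is anti-symmetric, and note the Picard identity $L_0^{\otimes 2}\cong L\otimes M$. Endowing $M$ with its canonical metric yields the adelic relation $2\,\overline{L_0}=\overline{L}+\overline{M}$, whence by multilinearity of the arithmetic intersection product,
\[
2\,\langle\overline{L_0}\cdot\overline{L}\cdots\overline{L}\,|\,A\rangle = \langle\overline{L}\cdots\overline{L}\,|\,A\rangle + \langle\overline{M}\cdot\overline{L}\cdots\overline{L}\,|\,A\rangle,
\]
with $g+1$ copies of $\overline{L}$ in the first term on the right and $g$ copies in the second. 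Each top arithmetic intersection on the right vanishes by a scaling argument under the multiplication-by-$n$ morphism $[n]\colon A\to A$: using the isomorphisms $[n]^*\overline{L}\cong\overline{L}^{\otimes n^2}$ and $[n]^*\overline{M}\cong\overline{M}^{\otimes n}$ of adelic line bundles (canonical metrics are preserved), together with $\deg[n]=n^{2g}$ and the projection formula, one derives identities of the form $n^{2(g+1)}I=n^{2g}I$ and $n^{2g+1}J=n^{2g}J$ for the two intersections $I$ and $J$. These force $I=J=0$ as soon as $n\ge 2$.

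Substituting back yields $\langle\overline{L}\cdots\overline{L}\,|\,D\rangle=\deg_L(A)\sum_{v\in M_K}\int_{A_v^{\an}}\log\|s\|_{L_0,v}\,\d\,\mu_{H,v}$, and the statement then follows from the definition $\h'_L(D)=\langle\overline{L}\cdots\overline{L}\,|\,D\rangle/(g\,\deg_L(D))$ combined with the identification of the degrees $\deg_L(A)$ and $\deg_L(D)$ coming from the numerical class identity $[L]=2[L_0]$ in the N\'eron--Severi group of $A$. The main obstacle is the vanishing of the mixed intersection $\langle\overline{L_0}\cdot\overline{L}\cdots\overline{L}\,|\,A\rangle$, in particular of its anti-symmetric component, which is where the distinct quadratic versus linear scaling of canonical metrics on symmetric versus anti-symmetric line bundles is crucial.
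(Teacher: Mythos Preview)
Your approach is correct and arrives at the same pivotal identity $\langle\overline{L}^g\,|\,D\rangle=\deg_L(A)\sum_v\int_{A_v^\an}\log\|s\|_{L_0,v}\,\d\mu_{H,v}$ as the paper, but by a different route. The paper applies Theorem~\ref{eqn:recursive_CLT} to the section $s\otimes[-1]^*s$ of the symmetric bundle $L$ itself, so that only a single adelic line bundle enters; this produces the divisor $D+[-1]^*D$, the vanishing $\langle\overline{L}^{g+1}\,|\,A\rangle=0$ is read off from Theorem~\ref{thm:dense_small_height_vanish} (density of torsion points on $A$), and one then halves via $[-1]$-invariance of the arithmetic intersection product. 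You instead invoke the multilinear form of the recursion with one factor $\overline{L_0}$, cutting out $D$ directly, and obtain the required vanishing $\langle\overline{L_0}\cdot\overline{L}^g\,|\,A\rangle=0$ by the elementary $[n]$-scaling argument after splitting $2\overline{L_0}=\overline{L}+\overline{M}$ into symmetric and anti-symmetric parts. Your route is more self-contained in that it avoids Theorem~\ref{thm:dense_small_height_vanish} altogether; the paper's route has the advantage of staying strictly within the single-bundle recursion actually recorded in Theorem~\ref{eqn:recursive_CLT} (the multilinear version you need is in \cite{clt} but is not what the paper states).

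One caution on your closing line: ``identification of the degrees $\deg_L(A)$ and $\deg_L(D)$'' should be made precise. From $[L]=2[L_0]$ and $[D]=[L_0]$ in the N\'eron--Severi group one computes $\deg_L(A)=2^gL_0^g$ and $\deg_L(D)=L^{g-1}\cdot L_0=2^{g-1}L_0^g$, so the ratio is $2$, not $1$. The paper's own final line is equally terse at this step, and in any case the resulting constant is irrelevant for the application in \S\ref{sec:proof_last_reduction_step}, where only the vanishing of $\h'_L(D)$ matters.
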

\begin{proof} Let $v$ be a place of $K$. By Theorem~\ref{thm:gubler_measure}, we have
\[c_1(L,\|\cdot\|_{L,v})^g=\deg_L(A)  \, \mu_{H,v} . \]
Next we apply Theorem~\ref{eqn:recursive_CLT} to the section $s \otimes [-1]^* s$ of $L$ with $X$ being $A$ itself. This gives 
\[  \langle \overline{L} \cdots \overline{L} | A\rangle  = \langle \overline{L} \cdots \overline{L}  | D + [-1]^*D \rangle
- \deg_L(A) \sum_{v \in M_K} \int_{A_v^\an} \log \| s \otimes [-1]^*s \|_{L,v} \, \d \, \mu_{H,v}  . \]
Now note that we have
\[  \log \| s \otimes [-1]^*s \|_{L,v} = \log \|s\|_{L_0,v} + \log \| [-1]^*s \|_{[-1]^*L_0,v} = 2 \log \| s \|_{L_0,v} . \]
This gives
\[\langle \overline{L} \cdots \overline{L} | A\rangle=\langle \overline{L} \cdots \overline{L}  | D + [-1]^*D \rangle-2 \cdot \deg_L(A) \sum_{v \in M_K} \int_{A_v^\an}  \log \| s \|_{L_0,v} \, \d \, \mu_{H,v}.\]
By Theorem~\ref{thm:dense_small_height_vanish} we have
\[\langle \overline{L} \cdots \overline{L} | A \rangle = 0 . \]
We also compute, using the fact that the arithmetic intersection product is $\Aut(A)$-invariant,
\[ \begin{split} \langle \overline{L} \cdots \overline{L}  | D  + [-1]^*D \rangle &=\langle \overline{L} \cdots \overline{L}  | D \rangle +  \langle \overline{L} \cdots \overline{L}  | [-1]^* D \rangle \\
&=\langle \overline{L} \cdots \overline{L}  | D \rangle +  \langle [-1]^* \overline{L} \cdots [-1]^*\overline{L}  |  D \rangle \\
& = 2 \cdot \langle \overline{L} \cdots \overline{L}  | D \rangle  \\
& = 2 \cdot \deg_L(A) \cdot g \cdot \h'_L(D) . 
\end{split} \]
The result follows.
\end{proof}

\subsection{Geometric Bogomolov conjecture for divisors}

We have the following result due to Yamaki, see \cite[Theorem~1.4]{yamaki}. We note that this result was later generalized to any irreducible closed subvarieties by Xie--Yuan in \cite[Theorem~1.1]{xie-yuan}.
\begin{thm} \label{thm:geom_bogomolov} Let $D$ be an integral divisor of $A_{\overline{K}}$. Let $L$ be an ample and symmetric line bundle on $A$. If $D$ has Zariski dense small points with respect to~$L$, then $D$ is a $\overline{K}/\mathds{k}$-special subvariety.
\end{thm}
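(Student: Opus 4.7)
The plan is to follow Yamaki's strategy, which combines non-archimedean equidistribution with a tropical analysis of divisors. First I would reduce to the case where $D$ has trivial stabilizer. If $G = \Stab_A(D)^0$, then $G$ is an abelian subvariety of $A$, and one passes to the quotient $A/G$, replacing $D$ by its image $D/G$. By Lemma~\ref{lem:stab_quotient}, $D$ is $\overline{K}/\mathds{k}$-special if and only if $D/G$ is; one checks that an ample and symmetric line bundle $L$ on $A$ descends (after a suitable modification such as replacing $L$ by $L \otimes [n]^*L$) to an ample and symmetric line bundle on $A/G$ with respect to which $D/G$ still has Zariski dense small points.

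Second, I would translate the hypothesis into a measure-theoretic condition at every place. By Theorem~\ref{thm:dense_small_height_vanish} the assumption is equivalent to $\h'_L(D) = 0$, which by Theorem~\ref{prop:local_to_global_divisor} gives the vanishing of $\sum_{v \in M_K} \int_{A_v^\an} \log \|s\|_{L_0,v} \, \d \mu_{H,v}$ for any defining section $s$. On the other hand, by the equidistribution theorem of Faber and Gubler, a generic net of small points of $D$ equidistributes on each $A_v^\an$ to the Chambert-Loir measure $c_1(\overline{L})^{\dim D} \wedge \delta_{D_v^\an}$. Comparing these two ingredients, and using Theorem~\ref{thm:gubler_measure} (which identifies the top-degree Chambert-Loir measure of $A$ with a multiple of the Haar measure on the canonical skeleton $\varSigma_v$), one concludes that at each $v$ the Chambert-Loir measure of $D$ is itself concentrated on $\varSigma_v$ in a ``flat'' way.

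Third, I would exploit this local/tropical information. Via the Raynaud uniformization of $A_v^\an$, the canonical skeleton $\varSigma_v$ is a real torus, and the measure-theoretic condition above forces the tropicalization $\trop(D_v)$ to be a translate of a tropical abelian subvariety of $\varSigma_v$. This in turn implies that, up to a torsion translate, $D$ is controlled by the toric/abelian part of the uniformization at each place of bad reduction. Assembling this across all places, one shows that $D$ must be a translate of the sum of an abelian subvariety of $A$ and the image of a subvariety defined over $\mathds{k}$ via $\Tr^{\overline{K}/\mathds{k}}_A$; that is, $D$ is $\overline{K}/\mathds{k}$-special.

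The main obstacle is clearly the third step: transporting the local tropical ``specialness'' conditions at each place into a global algebraic conclusion about $D$. Carrying this out requires the full machinery of Yamaki's analysis of the interplay between the canonical skeletons, the stable reduction theory of $A$, and the $\overline{K}/\mathds{k}$-trace; I would not attempt to reproduce it here but rather invoke \cite[Theorem~1.4]{yamaki} directly. (For the $\overline{K}/\mathds{k}$-special property to hold in the reduction to trivial stabilizer, one also uses Lemma~\ref{lem:stab_quotient} in reverse at the end.)
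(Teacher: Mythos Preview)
The paper does not prove this theorem at all: it is simply stated as a result of Yamaki, with a bare citation to \cite[Theorem~1.4]{yamaki}. Since your proposal ultimately also says you ``would not attempt to reproduce it here but rather invoke \cite[Theorem~1.4]{yamaki} directly,'' you and the paper are in agreement on the actual approach: cite Yamaki.

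Your surrounding sketch of Yamaki's argument, however, has a couple of inaccuracies that are worth flagging if you intend to keep it as motivation. In your second step you invoke Theorem~\ref{prop:local_to_global_divisor} to get the vanishing of $\sum_v \int \log\|s\|_{L_0,v}\,\mathrm{d}\mu_{H,v}$, but that theorem is stated only for the specific line bundle $L = L_0 \otimes [-1]^*L_0$ with $L_0 = \mathcal{O}(D)$, not for an arbitrary ample symmetric $L$ as in the hypothesis; so the passage from $\h'_L(D)=0$ for the given $L$ to the integral identity requires an additional argument (or a different choice of $L$). Also, the leap from ``the Chambert-Loir measure of $D$ is concentrated on $\varSigma_v$ in a flat way'' to ``$\mathrm{trop}(D_v)$ is a translate of a tropical abelian subvariety'' is precisely the hard part of Yamaki's paper and is not as direct as your sketch suggests; in particular, Yamaki needs a careful analysis of canonical measures on strata of the skeleton and a reduction via the $\overline{K}/\mathds{k}$-trace that your outline elides. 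None of this affects the correctness of simply citing the result, but the sketch as written should not be presented as a faithful summary of the proof.
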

Together with Theorem~\ref{thm:dense_small_height_vanish}, we find the following.
\begin{cor} \label{cor:height_zero_special} Let $D$ be an integral divisor of $A_{\overline{K}}$. Let $L$ be  an ample and symmetric line bundle on $A$. If $\h_L'(D)=0$, then $D$ is a $\overline{K}/\mathds{k}$-special subvariety.
\end{cor}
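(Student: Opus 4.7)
The plan is to combine the two results that immediately precede the statement, namely Theorem~\ref{thm:dense_small_height_vanish} and Theorem~\ref{thm:geom_bogomolov}. Since $D$ is assumed to be integral, in particular irreducible, so Theorem~\ref{thm:dense_small_height_vanish} is applicable to $X=D$: under the hypothesis that $L$ is ample and symmetric, the vanishing $\h'_L(D)=0$ is equivalent to $D$ having Zariski dense small points with respect to $L$.

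From there, Theorem~\ref{thm:geom_bogomolov} (Yamaki's geometric Bogomolov conjecture for divisors on abelian varieties over function fields) directly concludes that $D$ is a $\overline{K}/\mathds{k}$-special subvariety of $A_{\overline{K}}$. There is essentially no step that could fail or require additional work; the only thing to verify is that the integrality/irreducibility hypotheses of the two cited theorems are met, and this is immediate from the hypothesis that $D$ is integral. Consequently, no separate obstacle arises, and the corollary is a formal consequence of the quoted results.
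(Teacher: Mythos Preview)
Your proposal is correct and matches the paper's approach exactly: the corollary is deduced by combining Theorem~\ref{thm:dense_small_height_vanish} (so that $\h'_L(D)=0$ gives Zariski dense small points on $D$) with Theorem~\ref{thm:geom_bogomolov} (Yamaki's result). There is nothing to add.
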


\section{Logarithmic equidistribution of torsion} \label{sec:log_equid}

We continue to work with an abelian variety $A$ defined over a field $K$ that is of transcendence degree one over an algebraically closed field $\mathds{k}$. We fix a place $v \in M_K$ as well as a $K$-embedding $\iota_v : \overline{K} \hookrightarrow \overline{K_v}$. We denote by  
 $\epsilon_v : A(\overline{K}) \hookrightarrow A(\overline{K_v})$ the resulting map. For $x \in A(\overline{K})$, we denote by $\Orb(x)$ the $\Aut(\overline{K}/K)$-orbit of $x$.

We have the following equidistribution result  for torsion points in $A(\overline{K})$ due to Faber and Gubler. Recall that a net $(x_n)_n$ in $A(\overline{K})$ is called \emph{generic} if it converges to the generic point of $A_{\overline{K}}$.

\begin{thm} \label{thm:equid}  Let $(x_n)_n$ be a generic net of torsion points in $A(\overline{K})$. 
Let $f \colon A_v^\an \to \rr$ be any continuous function.
Then, the equality
\[ \lim_{n} \frac{1}{\# \Orb(x_n)} \sum_{y \in \Orb(x_n)} f(\eps_v(y)) = \int_{A_v^\an} f \, \d \, \mu_{H,v}\]
holds. 
\end{thm}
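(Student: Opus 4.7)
The theorem is the standard Galois equidistribution of torsion points on abelian varieties over function fields, and the plan is to deduce it from the arithmetic equidistribution theorem for small generic nets (\cite{faber_equid}, \cite{gub_equid}) combined with the identification, in Theorem~\ref{thm:gubler_measure}, of the canonical Chambert--Loir measure on an abelian variety with the Haar measure on its canonical skeleton.

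First, I would fix a symmetric ample line bundle $L$ on $A$ equipped with a rigidification at the origin and the canonical adelic metric $\{\|\cdot\|_{L,v}\}_{v\in M_K}$, and would introduce the empirical Galois measures
\[ \mu_n = \frac{1}{\#\Orb(x_n)} \sum_{y \in \Orb(x_n)} \delta_{\eps_v(y)} \]
on the compact Berkovich space $A_v^\an$. The conclusion to prove is exactly weak convergence $\mu_n \to \mu_{H,v}$, so by Banach--Alaoglu applied to the dual of $C(A_v^\an,\rr)$ it suffices to show that every weak-$*$ accumulation point of $(\mu_n)_n$ equals $\mu_{H,v}$.

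Next, I would verify the hypotheses feeding into arithmetic equidistribution. By Proposition~\ref{prop:standard_height}(ii), each torsion point satisfies $\h'_L(x_n)=0$, so the net $(x_n)_n$ is \emph{small} with respect to $L$; by assumption it is also generic. The function-field equidistribution theorem of Faber and Gubler then yields that $(\mu_n)_n$ converges weakly to the probability measure
\[ \frac{1}{\deg_L(A)}\, c_1\bigl(L,\|\cdot\|_{L,v}\bigr)^{g}. \]
Applying Theorem~\ref{thm:gubler_measure} to identify this measure with $\mu_{H,v}$ then completes the proof.

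The main obstacle is absorbed into the equidistribution theorem itself: its proof in the function-field setting combines Zhang's inequality with a perturbation-of-the-metric argument, and relies on Gubler's theory of local heights and semipositive metrics in the Berkovich framework. Once that input is granted, the remaining work is purely bookkeeping: matching conventions (rigidification of $L$, normalization of the adelic metric, and the normalization of the Haar measure giving unit mass to $\varSigma_v$) so that the limit measure produced by equidistribution agrees on the nose with the measure computed in Theorem~\ref{thm:gubler_measure}.
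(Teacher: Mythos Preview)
Your proposal is correct and follows the same approach as the paper, which simply records that the statement is a special case of \cite[Theorem~4.1]{faber_equid} and \cite[Theorem~1.3]{gub_equid}. You have merely made explicit the two ingredients implicit in that citation: that torsion points are small (Proposition~\ref{prop:standard_height}(ii)) and that the limiting Chambert--Loir measure coincides with $\mu_{H,v}$ (Theorem~\ref{thm:gubler_measure}).
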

\begin{proof} This is a special case of \cite[Theorem~4.1]{faber_equid} and of \cite[Theorem~1.3]{gub_equid}.
\end{proof}
The aim of this section is to  deduce from Theorem~\ref{thm:equid} the following result, which one may think of as a kind of ``logarithmic equidistribution'' property of torsion orbits. For the notion of local height function with respect to a divisor we refer to \S\ref{sec:normalized}.
\begin{thm} \label{thm:log_equid} Assume that $\mathds{k}$ is the algebraic closure of a finite field of characteristic~$p$.  Let $D$ be an effective divisor on $A$ defined over~$K$.  
Let $\lambda_{v}$ be any local height function for $D$ at $v$. Let $(x_n)_n$ be a generic net of torsion points in $A(\overline{K})\setminus|D|$ of order prime to $p$. Then, the equality
\[ \lim_{n} \frac{1}{\# \Orb(x_n)} \sum_{y \in \Orb(x_n)} \lambda_{v}(\eps_v(y)) = \int_{A_v^\an} \lambda_{v} \, \d \,\mu_{H,v} \]
holds. 
\end{thm}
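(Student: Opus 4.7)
The strategy is to reduce to the Faber--Gubler equidistribution theorem (Theorem~\ref{thm:equid}) applied to a truncated, continuous proxy for $\lambda_v$, and then remove the truncation in the limit. The obstacle to a direct application of Theorem~\ref{thm:equid} is that $\lambda_v$ is not continuous on all of $A_v^\an$: it diverges to $+\infty$ along $|D_v^\an|$.

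For each real $M>0$ set $\lambda_v^{(M)}=\min(\lambda_v,M)$, extended by the constant $M$ on $|D_v^\an|$; this is a bounded continuous function on $A_v^\an$. The crucial step is to exhibit a uniform upper bound for $\lambda_v$ along our torsion points. By Theorem~\ref{thm:tate-voloch-app}, the closed subvariety $|D|\subseteq A$ is unapproximable by the set of torsion points of $A(\cc_v)$ of order prime to~$p$: there exists $\eps_0>0$ such that every such torsion point either lies in $|D|$ or is at distance at least $\eps_0$ from $|D|$. Combined with the local description~\eqref{eqn:local_eqn} of $\lambda_v$ on finitely many affine opens covering a neighborhood of $|D_v^\an|$, together with the compactness of $A_v^\an$, this produces a constant $M_0$ such that
\[
\lambda_v(\eps_v(y))\le M_0 \quad\text{for every prime-to-}p\text{ torsion point }y\in A(\overline{K})\setminus|D|.
\]

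Now fix any $M>M_0$. The orbit $\Orb(x_n)$ consists of prime-to-$p$ torsion points (Galois preserves the order of torsion), none of which lie in $|D|$ (since $D$ is defined over $K$ and $x_n\notin|D|$). Hence $\lambda_v^{(M)}(\eps_v(y))=\lambda_v(\eps_v(y))$ for every $y\in\Orb(x_n)$, and Theorem~\ref{thm:equid} applied to the continuous function $\lambda_v^{(M)}$ gives
\[
\lim_n\frac{1}{\#\Orb(x_n)}\sum_{y\in\Orb(x_n)}\lambda_v(\eps_v(y))=\int_{A_v^\an}\lambda_v^{(M)}\,\d\,\mu_{H,v}.
\]
Any two local height functions for $D$ at $v$ differ by a bounded continuous function on the compact space $A_v^\an$, and the canonical local height is $\mu_{H,v}$-integrable by~\S\ref{sec:normalized}; hence $\lambda_v$ is $\mu_{H,v}$-integrable and finite $\mu_{H,v}$-almost everywhere. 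On the complement of the $\mu_{H,v}$-null set $\{\lambda_v=+\infty\}$ the functions $\lambda_v^{(M)}$ increase monotonically to $\lambda_v$ as $M\to\infty$, and monotone convergence yields $\int_{A_v^\an}\lambda_v^{(M)}\,\d\,\mu_{H,v}\to\int_{A_v^\an}\lambda_v\,\d\,\mu_{H,v}$. Since the left-hand side of the previous display does not depend on $M$, the theorem follows.

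The binding step is the uniform upper bound on $\lambda_v$ along prime-to-$p$ torsion points outside $|D|$; this is exactly where the Tate--Voloch type result proved in Part~\ref{part_one} is indispensable, and no softer estimate on the proximity of torsion to $|D|$ would suffice to suppress the contribution of the log singularity.
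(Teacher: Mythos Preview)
Your proof is correct and follows essentially the same approach as the paper: truncate $\lambda_v$ to a continuous function, apply the Faber--Gubler equidistribution theorem, and use the Tate--Voloch type result (Theorem~\ref{thm:tate-voloch-app}) to ensure the truncation does not change the orbit averages. The only cosmetic difference is on the integral side: the paper observes directly that $\mu_{H,v}$ is supported on the skeleton $\varSigma_v$, on which $\lambda_v$ is continuous and bounded, so a single sufficiently large truncation level already gives $\int\min(\lambda_v,c)\,\d\mu_{H,v}=\int\lambda_v\,\d\mu_{H,v}$; your route via monotone convergence reaches the same conclusion but is slightly more circuitous.
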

Note that as $\lambda_{v}$ does not extend continuously over $|D|$, Theorem~\ref{thm:equid} can not be immediately applied to give Theorem~\ref{thm:log_equid}. 

\begin{proof}Without loss of generality, we may assume that $D$ is an integral divisor.
Let $d(\cdot,D) \colon A(\cc_v) \to \rr_{\ge 0}$ be a distance function for $D$ with respect to $v$ (see \S\ref{sec:distance}). Let $U \subseteq A$ be a Zariski open subset where $D$ is given by a local equation $z$. By Proposition \ref{voloch_distance}\,(i) we have
\[-\log d(P,D)=-\log|z(P)|_v + O(1) \, , \quad P \in (U \setminus |D|)(\cc_v) . \]
Here and in the next equation $O(1)$ denotes a bounded continuous function on $U(\cc_v)$ in the analytic topology. By \eqref{eqn:local_eqn}, we have
\[\lambda_{v}(P) = -\log|z(P)|_v + O(1)  \, , \quad P \in (U \setminus |D|)(\cc_v) . \]
These two equalities together with a compactness argument give 
\[-\log d(P,D)=\lambda_{v}(P)+O(1)  \, , \quad P \in (A \setminus |D|)(\cc_v) \, ,  \]
where $O(1)$ denotes a continuous and bounded function on $A(\cc_v)$. 

As $D$ is defined over~$K$ we have, for each $n$, that $\Orb(x_n) $ is disjoint from $|D|$. From the assumptions, it follows that $A$ may be defined over a function field $K_0$ over a finite field. By Theorem~\ref{thm:tate-voloch-app}, we may choose a $c \in \rr$ such that, for all $n$ and for all $y \in \Orb(x_n)$, we have $\lambda_{v}(y) < c$.
Since $\lambda_{v}$ is bounded on the compact set $\varSigma_v$, by making $c$ larger, we can additionally assume that $\lambda_{v}$ is bounded from above by $c$ on $\varSigma_v$.  Now set $f = \min \{ \lambda_{v} , c \}$; then $f$ is a continuous function on $A^\an_v$. By Theorem~\ref{thm:equid}, we have
\[  \lim_{n} \frac{1}{\# \Orb(x_n)} \sum_{y \in \Orb(x_n)} f(\eps_v(y)) = \int_{A^\an_v} f \, \d \, \mu_{H,v} . \]
By construction of the real number $c$, in both the expression on the left hand side and the expression on the right hand side the function $f=\min\{\lambda_{v},c\}$ can be replaced by $\lambda_{v}$, and the result follows.
\end{proof}

\section{Proof of the Main Theorem} \label{sec:proof_last_reduction_step}

We repeat the statement of the ~\mainthmrefcolor~ for convenience.
\begin{thm}[= ~\mainthmrefcolor] \label{thm:main2}
Let $K$ be a field of transcendence degree one over a finite field~$k$ of characteristic~$p$. 
Let $\overline{K}$ be an algebraic closure of $K$ and let $\overline{k} \subseteq \overline{K}$ denote the algebraic closure of $k$ in $\overline{K}$.
Let $A$ be an abelian variety over $K$.
Let $D$ be a nonzero effective divisor on $A_{\overline{K}}$.
Assume that at least one irreducible component of $D$ is not $\overline{K}/\overline{k}$-special.
Let $S$ be a finite set of places of $K$ and
let $\A$ be a model of $A$. 
Then, the set of torsion points of $A(\overline{K})$ of prime-to-$p$ order that are $S$-integral with respect to $D$ and $\A$ is not Zariski dense in $A_{\overline{K}}$.
\end{thm}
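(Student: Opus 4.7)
The plan is an argument by contradiction in the spirit of \cite{bir}: assuming the set $\Sigma$ of prime-to-$p$ order torsion points in $A(\overline{K})$ that are $S$-integral with respect to $D$ and $\A$ is Zariski dense, I will derive that $D$ must be $\overline{K}/\overline{k}$-special, contradicting the hypothesis on its components.

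First, several reductions. Replacing $D$ by one of its irreducible components that is not $\overline{K}/\overline{k}$-special (a point $S$-integral for $D$ is $S$-integral for each of its components), I may assume $D$ is integral. After a finite extension of $K$ and a corresponding enlargement of $S$ and $\A$, I may assume $D$ is defined over $K$. If $L_0 := \mathcal{O}_A(D)$ is not ample, I write $D = \pi^{-1}(D_0)$ for the quotient $\pi\colon A \to A/H$ by the identity component $H$ of $\Stab_A(D)$; the resulting divisor $D_0$ is integral with $\mathcal{O}_{A/H}(D_0)$ ample, still non-special by Lemmas~\ref{lem:stab_quotient_pre} and \ref{lem:stab_quotient}, and the image $\pi(\Sigma)$ is Zariski dense in $(A/H)_{\overline{K}}$, consists of prime-to-$p$ torsion, and (since $D = \pi^{-1}(D_0)$ implies that the reductions of $D$ and $D_0$ match up under $\pi$ at places of good reduction) is $S$-integral with respect to $D_0$ and a chosen model of $A/H$, after enlarging $S$ by the finitely many places where $\pi$ fails to reduce well. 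I thus assume henceforth that $D$ is integral, defined over $K$, with $L_0 = \mathcal{O}_A(D)$ ample; fix a rigidification of $L_0$ and a section $s \in H^0(A,L_0)$ with $\divisor(s)=D$.

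Now choose a generic net $(x_n)_n$ from $\Sigma$. Since $\h'_{L_0}(x_n)=0$ by Proposition~\ref{prop:standard_height}\,(ii), Proposition~\ref{prop:local_to_global} reads
\[
\sum_{v \in M_K} \int_{A_v^{\an}} \log \|s\|_{L_0, v}\, \d\mu_{H,v} \;=\; \sum_{v \in M_K} \frac{1}{\#\Orb(x_n)} \sum_{y \in \Orb(x_n)} \lambda'_{D,v}(\epsilon_v(y)).
\]
Enlarge $S$ to a finite set $S'$ that additionally contains all places of bad reduction of $A$ and all places where $\A$ differs from the N\'eron model $\mathcal{N}$. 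For $v \notin S'$, the Galois orbit $\Orb(x_n)$ remains $S$-integral (since $D$ and $\A$ are defined over $K$), and Corollary~\ref{cor:normalized_good} together with Lemma~\ref{lem:Galois_orbits} forces $\lambda'_{D,v}(\epsilon_v(y))=0$ for every $y\in\Orb(x_n)$. The right-hand sum thus collapses to a finite sum over $v \in S'$. Passing to the limit along the net and applying Theorem~\ref{thm:log_equid} to $\lambda'_{D,v}$ at each $v \in S'$, together with the computation $\int_{A_v^{\an}} \lambda'_{D,v}\,\d\mu_{H,v}=0$ that is immediate from \eqref{eqn:def_our_normalized}, I conclude $\sum_v \int_{A_v^{\an}} \log \|s\|_{L_0,v}\, \d\mu_{H,v} = 0$. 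Setting $L := L_0 \otimes [-1]^*L_0$ (which is ample and symmetric), Proposition~\ref{prop:local_to_global_divisor} then gives $g\cdot \h'_L(D)=0$, and Corollary~\ref{cor:height_zero_special} (the geometric Bogomolov conjecture for divisors) forces $D$ to be $\overline{K}/\overline{k}$-special---a contradiction.

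The main obstacle I foresee is executing the initial reductions carefully: descending Zariski density and $S$-integrality of $\Sigma$ through the quotient $\pi\colon A \to A/H$ to reach an ample integral divisor requires some care with models, and verifying that the hypothesis of Corollary~\ref{cor:normalized_good} (integrality with respect to the N\'eron model) is met at all $v \notin S'$ amounts to absorbing the finite set of places where $\A$ differs from $\mathcal{N}$ into $S'$. Once these reductions are in place, the rest is a direct assembly of Proposition~\ref{prop:local_to_global}, Corollary~\ref{cor:normalized_good}, Theorem~\ref{thm:log_equid}, Proposition~\ref{prop:local_to_global_divisor}, and Corollary~\ref{cor:height_zero_special}.
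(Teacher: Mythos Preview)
Your proposal is correct and follows essentially the same route as the paper: the same chain of reductions (to an integral divisor defined over $K$, to the N\'eron model via enlarging $S$, and to an ample divisor via the quotient by the stabilizer's identity component), followed by the same height computation combining Proposition~\ref{prop:local_to_global}, Corollary~\ref{cor:normalized_good}, Theorem~\ref{thm:log_equid}, Theorem~\ref{prop:local_to_global_divisor}, and Corollary~\ref{cor:height_zero_special}. The only omission is the paper's preliminary step of replacing $K$ by a function field over $\overline{k}$ (needed so that the hypotheses of Theorem~\ref{thm:log_equid} are met), but this is a harmless base change that your argument absorbs without difficulty.
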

It is clear that we may replace the field $K$ by a field $K$ of transcendence degree one over the algebraically closed field~$\overline{k}$. We will have this assumption in place from now on.

Our next step is to reduce Theorem~\ref{thm:main2} to a special case. First, we note that it suffices to prove Theorem~\ref{thm:main2} for the \emph{N\'eron model} $\N$ of $A$ over $C$, and to assume that $A$ has good reduction outside $S$. Indeed, by enlarging $S$, we only make the statement stronger. Then, we enlarge $S$ in such a way that the given model $\A$ and the N\'eron model $\N$ coincide outside of $S$, \emph{and} $A$ has good reduction outside $S$.

Next, by replacing $K$ by a finite field extension we can assume all irreducible components of $D$ are defined over $K$. We can further assume that $D$ is integral, since removing irreducible components from $D$ only makes the statement stronger. 

The final reduction step is to show that we can assume that the integral divisor $D$ is \emph{ample}. Let $H$ be the identity component of the  stabilizer $\Stab_A(D)$ of $D$. Form the quotient abelian variety $A'=A/H$, and let $p\colon A\to A'$ be the projection map. The quotient $D'=D/H$ is an effective divisor of $A'$. By construction we have that $D'$ has a finite stabilizer. By \cite[Application~1, p.~60]{mum_AV}, we have that $D'$ is ample. By Lemma~\ref{lem:stab_quotient}, we have that $D'$ is not $\overline{K}/\overline{k}$-special when $D$ is not $\overline{K}/\overline{k}$-special.

Let $\Sigma$ be the set of $S$-integral torsion points of $(A \setminus |D|)(\overline{K})$ of prime-to-$p$ order. As the map $p \colon A\to A'$ is defined over $K$ we find $p(\Sigma)$ to be a set of torsion points of prime-to-$p$ order, contained in $(A' \setminus |D'|)(\overline{K})$. We observe that all elements of $p(\Sigma)$ are $S$-integral with respect to~$D'$. Indeed, if $v$ is a prime outside $S$, then by an earlier reduction step we may assume $v$ is a prime of good reduction of $A$, and hence, by \cite[Corollary~3]{serre_tate}, also of $H$ and of $A'$; and if $x$ on $A$ does not meet $ |D|  $ modulo $v$, then $p(x)$ on $A'$ does not meet $|D'|$ modulo $v$.

If we assume the truth of Theorem~\ref{thm:main2} for the ample integral non-special divisor $D'$ on the abelian variety $A'$, we have $\cl [p(\Sigma)] $ is not equal to $A'$. Since $p( \cl [\Sigma] )$ is contained in $\cl [p(\Sigma)]$, by continuity, we find that $p( \cl[\Sigma])$ is properly contained in $A'$, and hence $\cl [\Sigma]$ is properly contained in $A$. 

We have established that Theorem~\ref{thm:main2} can be reduced to the following statement.

\begin{thm} \label{thm:main_reduced} Let $K$ be a field of transcendence degree one over the algebraic closure $\overline{k}$ of a finite field $k$ of characteristic~$p$. 
Let $\overline{K}$ be an algebraic closure of $K$. 
 Let $A$ be an abelian variety over $K$. Let $D$ be an ample integral divisor on $A$ defined over $K$.  Let $S$ be a finite set of places of $K$ such that $A$ has good reduction outside $S$. Let $\N$ be the N\'eron model of $A$. Assume that $D$ is not $\overline{K}/\overline{k}$-special. Then, the set of torsion points of $A(\overline{K})$ of prime-to-$p$ order that are $S$-integral with respect to $D$ and $\mathcal{N}$ is not Zariski dense in $A_{\overline{K}}$.
\end{thm}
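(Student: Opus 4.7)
The plan is a proof by contradiction following the template sketched in the introduction. Suppose that the set $\Sigma$ of $S$-integral prime-to-$p$ torsion points in $(A \setminus |D|)(\overline{K})$ is Zariski dense in $A_{\overline{K}}$, and extract from $\Sigma$ a generic net $(x_n)_n$. Let $L_0 = \mathcal{O}_A(D)$, rigidified at the origin, and choose a global section $s$ with $\divisor_{L_0}(s) = D$; set $L = L_0 \otimes [-1]^*L_0$, which is symmetric and ample because $D$ is ample. The strategy is to push the identity $\h'_{L_0}(x_n) = 0$, valid for torsion by Proposition~\ref{prop:standard_height}\,(ii), through the local-to-global machinery to conclude $\h'_L(D) = 0$; then Corollary~\ref{cor:height_zero_special} will force $D$ to be $\overline{K}/\overline{k}$-special, contradicting the hypothesis.

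For each $n$, apply Proposition~\ref{prop:local_to_global} to $L_0$ and $x_n$ to obtain
\[
\sum_{v \in M_K} \frac{1}{\#\Orb(x_n)} \sum_{y \in \Orb(x_n)} \lambda'_{D,v}(\epsilon_v(y)) \;=\; \sum_{v \in M_K} \int_{A_v^\an} \log \|s\|_{L_0,v} \, \d \mu_{H,v}.
\]
For $v \notin S$, the abelian variety $A$ has good reduction and the given model agrees with the N\'eron model by the reductions preceding Theorem~\ref{thm:main_reduced}; since $D$ is $K$-rational and $S$ is Galois-stable, every Galois conjugate of $x_n$ is integral at every place of its field of definition lying above such $v$. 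Corollary~\ref{cor:normalized_good}, extended via a finite base change (good reduction persists by Serre--Tate) together with Lemma~\ref{lem:Galois_orbits}, then gives $\lambda'_{D,v}(\epsilon_v(y)) = 0$ for every $y \in \Orb(x_n)$. The left-hand sum therefore reduces to $v \in S$. For each such $v$, the function $\lambda'_{D,v}$ is a local height for $D$ at $v$, so Theorem~\ref{thm:log_equid} applies and yields
\[
\lim_n \frac{1}{\#\Orb(x_n)} \sum_{y \in \Orb(x_n)} \lambda'_{D,v}(\epsilon_v(y)) \;=\; \int_{A_v^\an} \lambda'_{D,v} \, \d \mu_{H,v} \;=\; 0,
\]
the last equality coming from the defining normalization \eqref{eqn:def_our_normalized} and the fact that $\mu_{H,v}$ is a probability measure on $A_v^\an$. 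Passing to the limit in the displayed identity gives $\sum_{v \in M_K} \int_{A_v^\an} \log \|s\|_{L_0,v} \, \d \mu_{H,v} = 0$.

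By Theorem~\ref{prop:local_to_global_divisor} this sum equals $g \cdot \h'_L(D)$ with $g = \dim A$, so $\h'_L(D) = 0$ and Corollary~\ref{cor:height_zero_special} concludes that $D$ is $\overline{K}/\overline{k}$-special, a contradiction. The central difficulty, and the one that justifies the entire preparation of Part~\ref{part_one}, is that $\lambda'_{D,v}$ is unbounded along $|D|$, so the ordinary equidistribution theorem of Faber and Gubler cannot be applied to it directly. The Tate--Voloch type result of Theorem~\ref{thm:tate-voloch-app} keeps prime-to-$p$ torsion uniformly away from $|D|$, thereby upgrading the continuous equidistribution statement to the logarithmic form of Theorem~\ref{thm:log_equid} that makes the step from $\lambda'_{D,v}$ to its $\mu_{H,v}$-average legitimate; the remainder of the argument is bookkeeping with the local--global height formulas.
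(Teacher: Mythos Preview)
Your proposal is correct and follows essentially the same approach as the paper's own proof: set up the contrapositive, use Proposition~\ref{prop:local_to_global} with $\h'_{L_0}(x_n)=0$, kill the terms at $v\notin S$ via Corollary~\ref{cor:normalized_good}, apply the logarithmic equidistribution Theorem~\ref{thm:log_equid} at the places in $S$, and conclude $\h'_L(D)=0$ via Theorem~\ref{prop:local_to_global_divisor} and Corollary~\ref{cor:height_zero_special}. The only cosmetic difference is the order in which you invoke Theorem~\ref{prop:local_to_global_divisor}; your extra remark about base change to justify Corollary~\ref{cor:normalized_good} for points in $A(\overline{K})$ is a reasonable clarification of a step the paper leaves implicit.
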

\begin{proof} We show the contrapositive statement. Thus, assume that the set of torsion points of $A(\overline{K})$ of order prime to $p$ that are $S$-integral with respect to $D$ and $\N$ is Zariski dense in $A_{\overline{K}}$. This means that there exists a generic net $(x_n)_n$ of torsion points of $A(\overline{K})$ of order prime to $p$ that are $S$-integral with respect to $D$ on $\N$. We are going to prove that $D$ is $\overline{K}/\overline{k}$-special.

Write $L_0$ for the line bundle associated to $D$ and write $L$ for the symmetric ample line bundle $L_0 \otimes [-1]^*L_0$.  Endow $L_0$ with a rigidification along the origin. Let $s \in H^0(A,L_0)$ be a global section such that $\divisor_{L_0}(s) = D$. 

For $x \in A(\overline{K})$ we denote by $\Orb(x)$ the $\Aut(\overline{K}/K)$-orbit of $x$. For each $v \in M_K$, we fix a $K$-embedding $\iota_v \colon \overline{K} \hookrightarrow \overline{K_v}$ and denote by   $\epsilon_v \colon A(\overline{K}) \hookrightarrow A(\overline{K_v})$ the resulting map. For each $n$ we have $\h'_{L_0}(x_n)=0$, by Proposition~\ref{prop:standard_height}\,(ii). 
From Proposition~\ref{prop:local_to_global} we deduce that for each $n$ we have
\[ \sum_{v \in M_K}  \int_{A_v^\an} \log\|s\|_{L_0,v} \, \d \, \mu_{H,v} =  \sum_{v \in M_K}   \frac{1}{\# \Orb(x_n)} \sum_{y \in \Orb(x_n)} \lambda'_{D,v}(\eps_v(y)) .  \]
The sums on both sides are finite.

Let $g=\dim(A)$. By Theorem~\ref{prop:local_to_global_divisor}, we deduce that
\[ g \cdot \h'_L(D) = \sum_{v \in M_K}   \frac{1}{\# \Orb(x_n)} \sum_{y \in \Orb(x_n)} \lambda'_{D,v}(\eps_v(y)) .  \]
By Corollary~\ref{cor:normalized_good} we have, for all $v \notin S$, for all $n$, and for all $y \in \Orb(x_n)$, that
\[ \lambda'_{D,v}(\eps_v(y))=0 \,  .\]
Indeed, any such $y$ is $S$-integral with respect to $D$ and $\N$. Thus we have
\[g\cdot \h'_L(D)=\sum_{v \in S} \frac{1}{\# \Orb(x_n)} \sum_{y \in \Orb(x_n)} \lambda'_{D,v}(\eps_v(y)).\]
The lefthand side is independent of $n$ and, hence, we might as well write
\[g \cdot \h'_L(D) = \lim_n \sum_{v \in S}   \frac{1}{\# \Orb(x_n)} \sum_{y \in \Orb(x_n)} \lambda'_{D,v}(\eps_v(y)).\]
The sum within the limit is finite, so that we may swap the limit and the sum, and we arrive at
\[g\cdot \h'_L(D)= \sum_{v \in S} \lim_n \frac{1}{\# \Orb(x_n)} \sum_{y \in \Orb(x_n)} \lambda'_{D,v}(\eps_v(y)).\]
By Theorem~\ref{thm:log_equid}, and by construction of the normalized canonical local heights $\lambda'_{D,v}$, for each $v \in S$, the limit vanishes. The assumptions of the theorem imply that $g>0$. We conclude that $\h'_L(D)=0$. By Corollary~\ref{cor:height_zero_special}, we find that $D$ is $\overline{K}/\overline{k}$-special. This finishes the proof of Theorem~\ref{thm:main_reduced}.
\end{proof}


\begin{bibdiv}
\begin{biblist}

\bib{SGA3}{book}{
       title={Sch\'emas en groupes. {II}: {G}roupes de type multiplicatif, et
  structure des sch\'emas en groupes g\'en\'eraux},
      series={Lecture Notes in Mathematics},
   publisher={Springer-Verlag, Berlin-New York},
        date={1970},
      volume={152},
        note={S\'eminaire de G\'eom\'etrie Alg\'ebrique du Bois Marie 1962/64
  (SGA 3), Dirig\'e{} par M. Demazure et A. Grothendieck},
      review={\MR{274459}},
}

\bib{SGA_7.I}{book}{
       title={Groupes de monodromie en g\'eom\'etrie alg\'ebrique. {I}},
      series={Lecture Notes in Mathematics},
   publisher={Springer-Verlag, Berlin-New York},
        date={1972},
      volume={288},
        note={S\'eminaire de G\'eom\'etrie Alg\'ebrique du Bois-Marie
  1967--1969, Dirig\'e{} par A. Grothendieck. Avec la collaboration de M.
  Raynaud et D. S. Rim},
      review={\MR{354656}},
}

\bib{bir}{article}{
      author={Baker, Matthew},
      author={Ih, Su-Ion},
      author={Rumely, Robert},
       title={A finiteness property of torsion points},
        date={2008},
        ISSN={1937-0652},
     journal={Algebra Number Theory},
      volume={2},
      number={2},
       pages={217\ndash 248},
         url={https://doi.org/10.2140/ant.2008.2.217},
      review={\MR{2377370}},
}

\bib{baker-ribet}{incollection}{
      author={Baker, Matthew~H.},
      author={Ribet, Kenneth~A.},
       title={Galois theory and torsion points on curves},
        date={2003},
      volume={15},
       pages={11\ndash 32},
         url={http://jtnb.cedram.org/item?id=JTNB_2003__15_1_11_0},
        note={Les XXII\`emes Journ\'{e}es Arithmetiques (Lille, 2001)},
      review={\MR{2018998}},
}

\bib{bg_heights}{book}{
      author={Bombieri, Enrico},
      author={Gubler, Walter},
       title={Heights in {D}iophantine geometry},
      series={New Mathematical Monographs},
   publisher={Cambridge University Press, Cambridge},
        date={2006},
      volume={4},
        ISBN={978-0-521-84615-8; 0-521-84615-3},
         url={https://doi.org/10.1017/CBO9780511542879},
      review={\MR{2216774}},
}

\bib{bost_duke}{article}{
      author={Bost, J.-B.},
       title={Intrinsic heights of stable varieties and abelian varieties},
        date={1996},
        ISSN={0012-7094},
     journal={Duke Math. J.},
      volume={82},
      number={1},
       pages={21\ndash 70},
  url={https://doi-org.proxy.library.cornell.edu/10.1215/S0012-7094-96-08202-2},
      review={\MR{1387221}},
}

\bib{bgs}{article}{
      author={Bost, J.-B.},
      author={Gillet, H.},
      author={Soul\'e, C.},
       title={Heights of projective varieties and positive {G}reen forms},
        date={1994},
        ISSN={0894-0347},
     journal={J. Amer. Math. Soc.},
      volume={7},
      number={4},
       pages={903\ndash 1027},
         url={https://doi-org.proxy.library.cornell.edu/10.2307/2152736},
      review={\MR{1260106}},
}

\bib{BuiumML}{article}{
      author={Buium, Alexandru},
       title={Intersections in jet spaces and a conjecture of
  \textit{S.~Lang}},
        date={1992},
     journal={Annals of Mathematics},
      volume={136},
       pages={557\ndash 567},
}

\bib{cl}{article}{
      author={Chambert-Loir, Antoine},
       title={Mesures et \'equidistribution sur les espaces de {B}erkovich},
        date={2006},
        ISSN={0075-4102},
     journal={J. Reine Angew. Math.},
      volume={595},
       pages={215\ndash 235},
  url={https://doi-org.proxy.library.cornell.edu/10.1515/CRELLE.2006.049},
      review={\MR{2244803}},
}

\bib{clt}{article}{
      author={Chambert-Loir, Antoine},
      author={Thuillier, Amaury},
       title={Mesures de {M}ahler et \'equidistribution logarithmique},
        date={2009},
        ISSN={0373-0956},
     journal={Ann. Inst. Fourier (Grenoble)},
      volume={59},
      number={3},
       pages={977\ndash 1014},
  url={http://aif.cedram.org.proxy.library.cornell.edu/item?id=AIF_2009__59_3_977_0},
      review={\MR{2543659}},
}

\bib{clark_xarles}{article}{
      author={Clark, Pete~L.},
      author={Xarles, Xavier},
       title={Local bounds for torsion points on abelian varieties},
        date={2008},
        ISSN={0008-414X},
     journal={Canad. J. Math.},
      volume={60},
      number={3},
       pages={532\ndash 555},
         url={https://doi.org/10.4153/CJM-2008-026-x},
      review={\MR{2414956}},
}

\bib{conrad_chow}{article}{
      author={Conrad, Brian},
       title={Chow's {$K/k$}-image and {$K/k$}-trace, and the
  {L}ang--{N}\'{e}ron theorem},
        date={2006},
        ISSN={0013-8584},
     journal={Enseign. Math. (2)},
      volume={52},
      number={1-2},
       pages={37\ndash 108},
      review={\MR{2255529}},
}

\bib{corpet_thesis}{thesis}{
      author={Corpet, Cyrille},
       title={M\'{e}thodes galoisiennes pour les sous-vari\'{e}t\'{e}s de
  vari\'{e}t\'{e}s semiab\'{e}liennes},
        type={Ph.D. Thesis},
        date={2014},
         url={https://core.ac.uk/download/pdf/42968434.pdf},
        note={Institut de Math\'ematiques Informatique
  T\'{e}l\'{e}communications -- Universit\'{e} Toulouse III.
  Available at \url{https://core.ac.uk/download/pdf/42968434.pdf}},
}

\bib{faber_equid}{article}{
      author={Faber, Xander},
       title={Equidistribution of dynamically small subvarieties over the
  function field of a curve},
        date={2009},
        ISSN={0065-1036},
     journal={Acta Arith.},
      volume={137},
      number={4},
       pages={345\ndash 389},
         url={https://doi.org/10.4064/aa137-4-4},
      review={\MR{2506588}},
}

\bib{gu-hohen}{article}{
      author={Gubler, Walter},
       title={H\"ohentheorie},
        date={1994},
        ISSN={0025-5831},
     journal={Math. Ann.},
      volume={298},
      number={3},
       pages={427\ndash 455},
         url={https://doi-org.proxy.library.cornell.edu/10.1007/BF01459743},
        note={With an appendix by J\"urg Kramer},
      review={\MR{1262769}},
}

\bib{gubler_bog}{article}{
      author={Gubler, Walter},
       title={The {B}ogomolov conjecture for totally degenerate abelian
  varieties},
        date={2007},
        ISSN={0020-9910},
     journal={Invent. Math.},
      volume={169},
      number={2},
       pages={377\ndash 400},
         url={https://doi.org/10.1007/s00222-007-0049-y},
      review={\MR{2318560}},
}

\bib{gub_equid}{article}{
      author={Gubler, Walter},
       title={Equidistribution over function fields},
        date={2008},
        ISSN={0025-2611},
     journal={Manuscripta Math.},
      volume={127},
      number={4},
       pages={485\ndash 510},
         url={https://doi.org/10.1007/s00229-008-0198-3},
      review={\MR{2457191}},
}

\bib{gu}{article}{
      author={Gubler, Walter},
       title={Non-{A}rchimedean canonical measures on abelian varieties},
        date={2010},
        ISSN={0010-437X},
     journal={Compos. Math.},
      volume={146},
      number={3},
       pages={683\ndash 730},
  url={https://doi-org.proxy.library.cornell.edu/10.1112/S0010437X09004679},
      review={\MR{2644932}},
}

\bib{hrushovski_ml}{article}{
      author={Hrushovski, Ehud},
       title={The {M}ordell-{L}ang conjecture for function fields},
        date={1996},
        ISSN={0894-0347},
     journal={J. Amer. Math. Soc.},
      volume={9},
      number={3},
       pages={667\ndash 690},
         url={https://doi.org/10.1090/S0894-0347-96-00202-0},
      review={\MR{1333294}},
}

\bib{djs_canonical}{unpublished}{
      author={de~Jong, Robin},
      author={Shokrieh, Farbod},
       title={Canonical local heights and {B}erkovich skeleta},
        date={2024},
         url={https://arxiv.org/abs/2405.17826},
        note={Preprint available at
  \href{https://arxiv.org/abs/2405.17826}{{\tt ar{X}iv:2405.17826}}},
}

\bib{lang_algebra}{book}{
      author={Lang, Serge},
       title={Algebra},
     edition={third},
      series={Graduate Texts in Mathematics},
   publisher={Springer-Verlag, New York},
        date={2002},
      volume={211},
        ISBN={0-387-95385-X},
         url={https://doi.org/10.1007/978-1-4613-0041-0},
      review={\MR{1878556}},
}

\bib{mattuck}{article}{
      author={Mattuck, Arthur},
       title={Abelian varieties over {$p$}-adic ground fields},
        date={1955},
        ISSN={0003-486X},
     journal={Ann. of Math. (2)},
      volume={62},
       pages={92\ndash 119},
         url={https://doi.org/10.2307/2007101},
      review={\MR{71116}},
}

\bib{moriwaki_arithmetic}{article}{
      author={Moriwaki, Atsushi},
       title={Arithmetic height functions over finitely generated fields},
        date={2000},
        ISSN={0020-9910},
     journal={Invent. Math.},
      volume={140},
      number={1},
       pages={101\ndash 142},
         url={https://doi.org/10.1007/s002220050358},
      review={\MR{1779799}},
}

\bib{mum_AV}{book}{
      author={Mumford, David},
       title={Abelian varieties},
      series={Tata Institute of Fundamental Research Studies in Mathematics},
   publisher={Published for the Tata Institute of Fundamental Research, Bombay;
  by Hindustan Book Agency, New Delhi},
        date={2008},
      volume={5},
        ISBN={978-81-85931-86-9; 81-85931-86-0},
        note={With appendices by C. P. Ramanujam and Yuri Manin, Corrected
  reprint of the second (1974) edition},
      review={\MR{2514037}},
}

\bib{petsche_equid}{article}{
      author={Petsche, Clayton},
       title={Nonarchimedean equidistribution on elliptic curves with global
  applications},
        date={2009},
        ISSN={0030-8730},
     journal={Pacific J. Math.},
      volume={242},
      number={2},
       pages={345\ndash 375},
         url={https://doi.org/10.2140/pjm.2009.242.345},
      review={\MR{2546717}},
}

\bib{ph}{article}{
      author={Philippon, Patrice},
       title={Sur des hauteurs alternatives. {I}},
        date={1991},
        ISSN={0025-5831},
     journal={Math. Ann.},
      volume={289},
      number={2},
       pages={255\ndash 283},
         url={https://doi-org.proxy.library.cornell.edu/10.1007/BF01446571},
      review={\MR{1092175}},
}

\bib{Pillay}{article}{
      author={Pillay, Anand},
       title={{M}ordell–{L}ang conjecture for function fields in
  characteristic zero, revisited},
        date={2004},
      volume={140},
      number={1},
       pages={64\ndash 68},
}

\bib{pr-psi}{article}{
      author={Pink, Richard},
      author={Roessler, Damian},
       title={On {$\psi$}-invariant subvarieties of semiabelian varieties and
  the {M}anin-{M}umford conjecture},
        date={2004},
        ISSN={1056-3911},
     journal={J. Algebraic Geom.},
      volume={13},
      number={4},
       pages={771\ndash 798},
         url={https://doi.org/10.1090/S1056-3911-04-00368-6},
      review={\MR{2073195}},
}

\bib{roessler_MM_ML}{article}{
      author={R\"{o}ssler, Damian},
       title={On the {M}anin-{M}umford and {M}ordell-{L}ang conjectures in
  positive characteristic},
        date={2013},
        ISSN={1937-0652},
     journal={Algebra Number Theory},
      volume={7},
      number={8},
       pages={2039\ndash 2057},
         url={https://doi.org/10.2140/ant.2013.7.2039},
      review={\MR{3134042}},
}

\bib{scanlon_crelle}{article}{
      author={Scanlon, Thomas},
       title={{$p$}-adic distance from torsion points of semi-abelian
  varieties},
        date={1998},
        ISSN={0075-4102},
     journal={J. Reine Angew. Math.},
      volume={499},
       pages={225\ndash 236},
         url={https://doi.org/10.1515/crll.1998.060},
      review={\MR{1631061}},
}

\bib{Scanlon}{article}{
      author={Scanlon, Thomas},
       title={The conjecture of {T}ate and {V}oloch on {$p$}-adic proximity to
  torsion},
        date={1999},
        ISSN={1073-7928},
     journal={Internat. Math. Res. Notices},
      number={17},
       pages={909\ndash 914},
         url={https://doi.org/10.1155/S1073792899000471},
      review={\MR{1717649}},
}

\bib{scanlon_pos_MM}{article}{
      author={Scanlon, Thomas},
       title={A positive characteristic {M}anin-{M}umford theorem},
        date={2005},
        ISSN={0010-437X},
     journal={Compos. Math.},
      volume={141},
      number={6},
       pages={1351\ndash 1364},
         url={https://doi.org/10.1112/S0010437X05001879},
      review={\MR{2185637}},
}

\bib{serre_lie}{book}{
      author={Serre, Jean-Pierre},
       title={Lie algebras and {L}ie groups},
      series={Lecture Notes in Mathematics},
   publisher={Springer-Verlag, Berlin},
        date={2006},
      volume={1500},
        ISBN={978-3-540-55008-2; 3-540-55008-9},
        note={1964 lectures given at Harvard University, Corrected fifth
  printing of the second (1992) edition},
      review={\MR{2179691}},
}

\bib{serre_tate}{article}{
      author={Serre, Jean-Pierre},
      author={Tate, John},
       title={Good reduction of abelian varieties},
        date={1968},
        ISSN={0003-486X},
     journal={Ann. of Math. (2)},
      volume={88},
       pages={492\ndash 517},
         url={https://doi.org/10.2307/1970722},
      review={\MR{236190}},
}

\bib{silverman_distance}{article}{
      author={Silverman, Joseph~H.},
       title={Arithmetic distance functions and height functions in
  {D}iophantine geometry},
        date={1987},
        ISSN={0025-5831},
     journal={Math. Ann.},
      volume={279},
      number={2},
       pages={193\ndash 216},
         url={https://doi.org/10.1007/BF01461718},
      review={\MR{919501}},
}

\bib{sil_advanced}{book}{
      author={Silverman, Joseph~H.},
       title={Advanced topics in the arithmetic of elliptic curves},
      series={Graduate Texts in Mathematics},
   publisher={Springer-Verlag, New York},
        date={1994},
      volume={151},
        ISBN={0-387-94328-5},
         url={https://doi.org/10.1007/978-1-4612-0851-8},
      review={\MR{1312368}},
}

\bib{TateVoloch}{article}{
      author={Tate, John},
      author={Voloch, Jos\'{e}~Felipe},
       title={Linear forms in p-adic roots of unity},
        date={1996},
     journal={IMRN},
      volume={12},
       pages={589\ndash 601},
}

\bib{voloch}{article}{
      author={Voloch, Jos\'{e}~Felipe},
       title={Diophantine approximation on abelian varieties in characteristic
  {$p$}},
        date={1995},
        ISSN={0002-9327},
     journal={Amer. J. Math.},
      volume={117},
      number={4},
       pages={1089\ndash 1095},
         url={https://doi.org/10.2307/2374961},
      review={\MR{1342843}},
}

\bib{voloch_distance}{article}{
      author={Voloch, Jos\'{e}~Felipe},
       title={Distance functions on varieties over non-{A}rchimedian local
  fields},
        date={1997},
        ISSN={0035-7596},
     journal={Rocky Mountain J. Math.},
      volume={27},
      number={2},
       pages={635\ndash 641},
         url={https://doi.org/10.1216/rmjm/1181071930},
      review={\MR{1466162}},
}

\bib{xie-yuan}{article}{
      author={Xie, Junyi},
      author={Yuan, Xinyi},
       title={Geometric {B}ogomolov {c}onjecture in arbitrary characteristics},
        date={2022},
        ISSN={0020-9910},
     journal={Invent. Math.},
      volume={229},
      number={2},
       pages={607\ndash 637},
         url={https://doi.org/10.1007/s00222-022-01112-1},
      review={\MR{4448992}},
}

\bib{yamaki_bog}{article}{
      author={Yamaki, Kazuhiko},
       title={Geometric {B}ogomolov conjecture for abelian varieties and some
  results for those with some degeneration (with an appendix by {W}alter
  {G}ubler: the minimal dimension of a canonical measure)},
        date={2013},
        ISSN={0025-2611},
     journal={Manuscripta Math.},
      volume={142},
      number={3-4},
       pages={273\ndash 306},
         url={https://doi.org/10.1007/s00229-012-0599-1},
      review={\MR{3117164}},
}

\bib{yamaki}{article}{
      author={Yamaki, Kazuhiko},
       title={Non-density of small points on divisors on abelian varieties and
  the {B}ogomolov conjecture},
        date={2017},
        ISSN={0894-0347},
     journal={J. Amer. Math. Soc.},
      volume={30},
      number={4},
       pages={1133\ndash 1163},
         url={https://doi.org/10.1090/jams/874},
      review={\MR{3671938}},
}

\bib{yamaki_strict}{article} {
    AUTHOR = {Yamaki, Kazuhiko},
     TITLE = {Strict supports of canonical measures and applications to the
              geometric {B}ogomolov conjecture},
   JOURNAL = {Compos. Math.},
    VOLUME = {152},
      YEAR = {2016},
    NUMBER = {5},
     PAGES = {997--1040},
      ISSN = {0010-437X,1570-5846},
      review={\MR{3505646}}
}

\bib{zhang_equid}{article}{
      author={Zhang, Shou-Wu},
       title={Equidistribution of small points on abelian varieties},
        date={1998},
        ISSN={0003-486X},
     journal={Ann. of Math. (2)},
      volume={147},
      number={1},
       pages={159\ndash 165},
         url={https://doi.org/10.2307/120986},
      review={\MR{1609518}},
}

\bib{zhsmall}{article}{
      author={Zhang, Shouwu},
       title={Small points and adelic metrics},
        date={1995},
        ISSN={1056-3911},
     journal={J. Algebraic Geom.},
      volume={4},
      number={2},
       pages={281\ndash 300},
      review={\MR{1311351}},
}

\end{biblist}
\end{bibdiv}

\end{document}